\documentclass{amsart}
\usepackage{amssymb}

\newcommand{\map}[1]{\xrightarrow{#1}}

\newcommand{\mil}{\varprojlim}
\newcommand{\dlim}{\varinjlim}
\newcommand{\iso}{\cong}
\newcommand{\define}{\stackrel{\mathrm{def}}{=}}

\newcommand{\Gal}{\mathrm{Gal}}
\newcommand{\Hom}{\mathrm{Hom}}

\newcommand{\alg}{\mathrm{alg}}

\newcommand{\N}{\mathbf{N}}

\newcommand{\Q}{\mathbf Q}
\newcommand{\Z}{\mathbf Z}

\newcommand{\C}{\mathbf C}

\newcommand{\co}{\mathcal {O}}
\newcommand{\Frob}{\mathrm {Frob}}


\newcommand{\gm}{\mathfrak m}
\newcommand{\gn}{\mathfrak n}
\newcommand{\gl}{\mathfrak l}
\newcommand{\ga}{\mathfrak a}
\newcommand{\gb}{\mathfrak b}
\newcommand{\gq}{\mathfrak q}

\newcommand{\unr}{\mathrm{unr}}
\newcommand{\ord}{\mathrm {ord}}
\newcommand{\cl}{\mathfrak L}
\newcommand{\cn}{\mathfrak N}
\newcommand{\sel}{\mathcal F}
\newcommand{\Sel}{\mathrm {Sel}}
\newcommand{\Stub}{\mathrm {Stub}}
\newcommand{\ind}{\mathrm {ind}}

\newcommand{\even}{\mathrm {even}}
\newcommand{\odd}{\mathrm {odd}}

\newcommand{\xgraph}{\mathcal X}
\newcommand{\tbar}{\overline T}

\newcommand{\len}{\mathrm {length}}
\newcommand{\loc}{\mathrm {loc}}
\newcommand{\Tw}{\mathrm {Tw}}

\newcommand{\esodd}{\kappa}
\newcommand{\eseven}{\lambda}



\newcommand{\gp}{\mathfrak P}

\newcommand{\Fil}{\mathrm {Fil}}
\newcommand{\definite}{\mathrm {definite}}
\newcommand{\indefinite}{\mathrm {indefinite}}

\input xy
\xyoption{all}

\begin{document}

\author{Benjamin Howard}
\thanks{This research was supported by an NSF postdoctoral fellowship.}
\address{Department of Mathematics, Harvard University, Cambridge, MA.}
\curraddr{Department of Mathematics, University of Chicago, 
5734 S. University Ave., Chicago, IL 60637}
\email{howard@math.uchicago.edu}

\date{\today}
\subjclass[2000]{11G05, 11G40, 11R23}

\title{Bipartite Euler Systems}

\begin{abstract}
If $E$ is an elliptic curve over $\Q$ and $K$ is an imaginary quadratic field,
there is an Iwasawa main conjecture predicting the behavior of 
the Selmer group of $E$ over the anticyclotomic $\Z_p$-extension of
$K$.  The main conjecture takes different forms depending on the 
sign of the functional equation of $L(E/K,s)$.  In the present work
we combine ideas of Bertolini and Darmon with those of Mazur and Rubin
to shown that the 
main conjecture, regardless of the sign of the functional equation, 
can be reduced to 
proving the nonvanishing of sufficiently many $p$-adic $L$-functions
attached to a family of congruent modular forms. 
\end{abstract}

\maketitle

\theoremstyle{plain}
\newtheorem{Thm}{Theorem}[subsection]
\newtheorem{Prop}[Thm]{Proposition}
\newtheorem{Lem}[Thm]{Lemma}
\newtheorem{Cor}[Thm]{Corollary}
\newtheorem{Con}[Thm]{Conjecture}
\newtheorem{BigTheorem}{Theorem}

\theoremstyle{definition}
\newtheorem{Def}[Thm]{Definition}
\newtheorem{Hyp}[Thm]{Hypothesis}

\theoremstyle{remark}
\newtheorem{Rem}[Thm]{Remark}
\newtheorem{Ques}[Thm]{Question}

\renewcommand{\labelenumi}{(\alph{enumi})}
\renewcommand{\theBigTheorem}{\Alph{BigTheorem}}


\section{Introduction}


Let $E$ be an elliptic curve over $\Q$ with conductor $N$, and let
$K$ be a quadratic imaginary field of discriminant $d_K$ prime to $N$
with quadratic character $\epsilon$.  By work of Gross and Zagier,
the sign of the functional equation of $L(E/K,s)$ is equal to
$-\epsilon(N)$,
For a rational prime $p\nmid 6 d_KN$ at which $E$ has good \emph{ordinary} 
reduction, let $D_\infty/K$ be the anticyclotomic
$\Z_p$-extension of $K$.  The behavior of the Selmer group 
$\Sel_{p^\infty}(E/D_\infty)$ depends crucially on the value 
of $\epsilon(N)$: if $\epsilon(N)=1$ then it is conjectured that
the Pontryagin dual, $X$, of $\Sel_{p^\infty}(E/D_\infty)$ has rank
one over the Iwasawa algebra $\Lambda=\Z_p[[\Gal(D_\infty)/K]]$
and that the characteristic ideal of the torsion submodule can be expressed
in terms of Heegner points arising from a Shimura curve parametrization of $E$.
We call this the indefinite case.  In the definite case, $\epsilon(N)=-1$,
it is conjectured that $X$ is a torsion $\Lambda$-module with characteristic
ideal given by a $p$-adic $L$-function.  
We refer to these two conjectures collectively as the Iwasawa main conjecture.

Much is known about the Iwasawa main conjecture,
see \cite{bertolini, me, me2} for the indefinite case and \cite{BD03}
for the definite case.  In particular, in either case one knows that the rank
of $X$ is as predicted above, and one knows one divisibility of 
the conjectured equality for the characteristic
ideal of the torsion submodule (in the indefinite case this is conditional
on as yet unpublished work of Cornut and Vatsal generalizing the main 
result of \cite{cornut} to
Heegner points on Shimura curves attached to indefinite quaternion algebras, 
see the main results of \cite{me2}).  
The goal of the present article is to demonstrate that the methods used by 
Bertolini and Darmon \cite{BD03} to treat the definite case
can be used give a uniform treatment of the two cases,
and to develop a criterion to determine when the known divisibility
is actually an equality. We do this by developing a theory of 
\emph{bipartite Euler systems} similar in spirit to Mazur and Rubin's
\cite{mazur-rubin} theory of Kolyvagin systems, but adapted to fit the
family of cohomology classes constructed by Bertolini and Darmon. 

Bertolini and Darmon's construction is, roughly speaking, as follows.  Let
$f$ be the modular form of level $N$ attached to $E$.  For a choice
of positive integer $k$ one can define a set of \emph{admissible} primes
$\cl_k$, all of which are inert in $K$, with the property that for 
any $n\in\cn_k$ (the set of squarefree products of primes in $\cl_k$)
there is a modular form $f_n$ of level $nN$ which is congruent 
to $f$ modulo $p^k$.   This modular form comes to us via a generalization
of Ribet's well-known level raising theorem. Define a
graph whose vertices are the elements of $\cn_k$ with edges connecting
$n$ to $n\ell$ for each coprime $n\in\cn_k$ and $\ell\in\cl_k$.  
A vertex $n$ is said to be either definite or indefinite depending on 
whether $\epsilon(nN)$ is $-1$ or $1$, respectively, and this defines 
a bipartition of the graph: every edge connects a definite vertex 
to an indefinite vertex.
At an indefinite vertex the modular form
$f_n$ allows one to define a cohomology class 
$\esodd_n\in \mil_r H^1(D_r/K, E[p^k])$, which arises as the Kummer image
of Heegner points on the abelian variety attached to $f_n$.  
At a definite vertex one can attach to $f_n$ a $p$-adic
$L$-function $\eseven_n\in\Lambda/p^k\Lambda$. 
There are reciprocity laws relating the elements at any two adjacent
vertices;  these reciprocity laws are examples of Jochnowitz
congruences in the sense of \cite{BD99b}. 
The \emph{pair} of families 
$$
\{\esodd_n\mid n\in\cn_k, n \mathrm{\ indefinite}\}
\hspace{1cm} 
\{\eseven_n\mid n\in\cn_k,  n \mathrm{\ definite} \}
$$ 
is then our prototype of a bipartite Euler system.
Our main result asserts that the existence of a bipartite Euler system
implies one divisibility of the Iwasawa main conjecture, and 
if one can prove sufficiently many nonvanishing
theorems for the $p$-adic $L$-functions $\eseven_n$ then equality holds in the
Iwasawa main conjecture.  To emphasize, this approach treats the definite
and indefinite cases on completely equal footing.
The precise statement is given in Theorem \ref{abstract mc}.
 
The reader is referred to \cite{BD03} for the details of the construction
sketched above.  In the present article we simply assume that a pair of
families satisfying the appropriate axioms is given.  It should be noted
that Bertolini and Darmon do not construct enough 
classes to provide an Euler system in our sense.  Those authors
assume that $\epsilon(N)=-1$ and that $f$ is $p$-isolated 
\cite[Definition 1.2]{BD03},
and then choose a particular path (starting at the vertex corresponding to 
the empty product $1$) in the graph defined above.  
The Euler system elements are then constructed
\emph{only at vertices along that path}.  
The path is not allowed to be arbitrary: it is required 
that the modular form $f_n$ is again $p$-isolated at 
each definite vertex in the path.  It would thus be necessary to remove
the $p$-isolated hypothesis in order to make full use of the theory
developed herein.  

Recently Darmon and Iovita \cite{DarIov} have adapted the methods of 
\cite{BD03} to the case where $\epsilon(N)=-1$ and $p$ is a prime
of \emph{supersingular} reduction for $E$.  Given the results of the present
article, it seems likely that these ideas can be pushed further to
cover all four cases (definite/ordinary, indefinite/ordinary, 
definite/supersingular, and indefinite/supersingular; the final case being the
least well understood).  The main (only?) obstruction to doing so is the 
removal of the technical $p$-isolated hypothesis referred to above.

We remark that the idea that Euler systems can be used not only
to bound Selmer groups, but also to give a criterion for the sharpness of
the bound, goes back to Kolyvagin.  This was extended to the Iwasawa-theoretic
setting by Mazur and Rubin \cite{mazur-rubin}, but the criterion for
equality seems very difficult to verify in practice.  In the
usual theory of Euler systems, in e.g. \cite{rubin}, one begins
with cohomology classes (related in some way to $L$-functions)
defined over abelian extensions of the ground 
field $K$, and then applies Kolyvagin's derivative operators
to these classes to obtain classes defined over $K$ itself. These derived
classes are  the \emph{Kolyvagin system}, and are somewhat less directly
related to $L$-functions than the Euler system from which they are derived.  
The criterion for equality (e.g. the \emph{primitivity} of \cite{mazur-rubin}
Definitions 4.5.5 and 5.3.9)
is then a nonvanishing statement
for the Kolyvagin system, rather than for the Euler system itself.   
The observation that Bertolini and Darmon's
methods make no use of Kolyvagin's derivative operators is what allows
us to obtain a criterion for equality in the main conjecture
directly in terms of ($p$-adic) $L$-functions.

Finally, and somewhat more speculatively, we address the question of
whether there exist bipartite Euler systems other than that constructed by
Bertolini and Darmon.  Gross and Kudla \cite{GroKud} 
have investigated the Rankin
triple product $L$-function $L(f\times g\times h,s)$ 
associated to three newforms $f,g,h$ of weight $2$ on $\Gamma_0(N)$.  This 
$L$-function has analytic continuation and functional equation
in $s\mapsto 4-s$, and the sign in the functional equation is given 
by a simple formula.  When this sign is $1$, Gross and Kudla prove
a special value formula similar to Gross's special value formula \cite{Gro85}
in the Heegner point situation, a key ingredient in the reciprocity 
laws used in \cite{BD03}.  When the sign in the functional equation is
$-1$, Gross and Kudla construct a special homologically trivial 
cycle in the codimension $1$ Chow group of a triple product of Shimura
curves.  Applying the $p$-adic Abel-Jacobi map to this special 
cycle yields a class in the Galois cohomology of the tensor product 
$V_f\otimes V_g\otimes V_h$ of the $p$-adic Galois representations
attached to $f,g,h$.  Thus we have the beginnings of a bipartite Euler
system for $V_f\otimes V_g\otimes V_h$.  Moreover, Gross and Kudla 
conjecture that the height of their special cycle in the Chow group
is related to the derivative $L'(f\times g\times h,2)$, in close
analogy with the Gross-Zagier formula.


\section{Euler Systems over Artinian rings}
\label{S:Euler Systems}
 

In this section we develop a general theory of (bipartite) Euler systems. 
The axioms (Definition \ref{es}) are designed to include
the family of cohomology classes used by Bertolini and Darmon \cite{BD03}.
The methods used to bound the associated Selmer group and to develop a
criterion for equality (Theorems \ref{esb} and \ref{rigidity}) originated
with Kolyvagin, and we follow closely the approach to Kolyvagin's
theory described by Mazur and Rubin \cite{mazur-rubin}.

Let $R$ be a principal Artinian local ring with maximal ideal $\gm$ and
residue characteristic $p>3$. 
Let $T$ be a free $R$-module of rank two 
equipped with a continuous (for the discrete topology) action of 
$G_K\define \Gal(K^\alg/K)$ for some number field $K$. 
We assume that $T$ admits a perfect, $G_K$-equivariant, alternating 
$R(1)$-valued pairing. Let 
$$
\loc_w:H^1(K,T)\map{}H^1(K_w,T)
$$
denote the localization map (we assume that we are given a fixed
embedding $K^\alg\hookrightarrow K_w^\alg$ for every place $w$).
Throughout \S \ref{S:Euler Systems} we assume that we are given
a fixed self-dual Selmer structure $(\sel,\Sigma_\sel)$ on $T$, as
defined in \S \ref{selmer modules}.

If $B$ is any $R$-module and $b\in B$ we define $\ind(b,B)$,
the \emph{index of divisibility} of $b$ in $B$, to be the largest $k\le\infty$
such that $b\in\gm^k B$.


\subsection{Selmer modules}
\label{selmer modules}


\begin{Def}
A \emph{Selmer structure} $(\sel,\Sigma_\sel)$ on  $T$ is
a finite set of places $\Sigma_\sel$ of $K$
containing the archimedean places, the primes at which $T$ is
ramified, and the prime $p$; and, for every place $w$ of $K$,
a choice of submodule 
$$
H^1_\sel(K_w,T)\subset H^1(K_w,T)
$$
such that $H^1_\sel(K_w,T)=H^1_\unr(K_w,T)$ for all $w\not\in\Sigma_\sel$.
Define the \emph{Selmer module} $\Sel_\sel=\Sel_\sel(K,T)$ 
associated to $\sel$ by the exactness of
$$
0\map{}\Sel_\sel\map{}H^1(K,T)\map{\oplus\loc_w}
\bigoplus_w H^1(K_w,T)/H^1_\sel(K_w,T)
$$
where the sum is over all places $w$ of $K$.
A Selmer structure $\sel$ is \emph{self-dual} if the submodule 
$H^1_\sel(K_w,T)$ is maximal isotropic under the 
(symmetric) local Tate pairing
$$
H^1(K_w,T)\times H^1(K_w,T)\map{\cup}H^2(K_w, R(1)) \iso R
$$
for every finite place $w\in\Sigma_\sel$.
\end{Def}

\begin{Rem}
Note that $p\not=2$ implies $H^1(K_w,T)=0$ for $w$ archimedean.
By Tate local duality, $H^1_\sel(K_w,T)=H^1_\unr(K_w,T)$ is maximal isotropic
for all $w\not\in \Sigma_\sel$.
\end{Rem}

\begin{Rem}\label{propagation}
If $S$ is a submodule (resp. quotient) of $T$ and $(\sel,\Sigma_\sel)$
is a Selmer structure on $T$, then there is an induced Selmer structure,
still denoted $(\sel,\Sigma_\sel)$, on $S$ defined as the
preimage of $H^1_\sel(K_w,T)$ under 
$H^1(K_w,S)\map{} H^1(K_w,T)$
(resp. the image of $H^1_\sel(K_w,T)$ under 
$H^1(K_w,T)\map{} H^1(K_w,S))$
for every place $w$ of $K$. By \cite[Lemma 1.1.9]{mazur-rubin},
$H^1_\sel(K_w,S)=H^1_\unr(K_w,S)$ for every $w\not\in\Sigma_\sel$, and so 
this is well-defined. We refer to this as \emph{propagation} 
of Selmer structures.
\end{Rem}


\subsection{Modified Selmer modules}
\label{ordinary selmer}


Now suppose we have a set of primes $\cl$ of $K$ which is 
disjoint from $\Sigma_\sel$ and satisfies
\begin{enumerate}
\item $\forall \gl\in\cl,\N(\gl)\not\equiv 1\pmod{p}$, 
\item $\forall \gl\in\cl$, the Frobenius $\Frob_\gl$ 
acts on $T$ with eigenvalues $\N(\gl)$ and $1$.
\end{enumerate}
Let $\cn$ denote the set of squarefree 
products of primes in $\cl$.
The two conditions above imply that $T\iso R\oplus R(1)$ as a 
$\Gal(K_\gl^\alg/K_\gl)$-module, and that the decomposition is unique.
For each $\gl\in\cl$ we define the \emph{ordinary} cohomology
$H^1_\ord(K_\gl,T)$ to be the image of 
$H^1(K_\gl,R(1))\map{}H^1(K_\gl,T).$

\begin{Lem}\label{local freeness}
For $\gl\in\cl$, the decomposition $T\iso R\oplus R(1)$ induces 
a decomposition
$$H^1(K_\gl,T)\iso H^1_\unr(K_\gl,T)\oplus H^1_\ord(K_\gl,T)$$
in which each summand is free of rank one over $R$ and is maximal isotropic
under the local Tate pairing.
\end{Lem}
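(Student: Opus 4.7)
Starting from the $G_{K_\gl}$-module decomposition $T\iso R\oplus R(1)$ supplied by the hypotheses on $\gl$, the plan is simply to take cohomology and verify the three assertions (decomposition, freeness, isotropy) in turn. Applying $H^1(K_\gl,-)$ to the module-level decomposition yields
$$H^1(K_\gl,T)\iso H^1(K_\gl,R)\oplus H^1(K_\gl,R(1)),$$
so the first task is to identify the two summands with $H^1_\unr(K_\gl,T)$ and $H^1_\ord(K_\gl,T)$. Since $T$ is unramified at $\gl$ and $\N(\gl)-1$ is a unit in $R$, the standard formula $H^1_\unr(K_\gl,T)=T/(\Frob_\gl-1)T$ annihilates the $R(1)$-summand and returns exactly the $R$-summand, so $H^1_\unr(K_\gl,T)$ coincides with $H^1(K_\gl,R)$. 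The ordinary side is, by definition, the image of $H^1(K_\gl,R(1))\to H^1(K_\gl,T)$, which is simply the inclusion onto the second factor.

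Next I would verify that each summand is free of rank one over $R$. For the first, continuous homomorphisms $G_{K_\gl}\to R$ factor through the maximal pro-$p$ quotient of $G_{K_\gl}^{\mathrm{ab}}$; since $\gl\nmid p$ and $p\nmid\N(\gl)-1$, this quotient is $\Z_p$ topologically generated by $\Frob_\gl$, yielding $H^1(K_\gl,R)\iso R$. For the second summand, I would invoke local Tate duality: since $R$ is a principal Artinian local ring (hence Gorenstein and self-injective), duality provides a perfect $R$-valued pairing
$$H^1(K_\gl,R)\times H^1(K_\gl,R(1))\map{}R,$$
from which $H^1(K_\gl,R(1))\iso\Hom_R(R,R)\iso R$.

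The maximal-isotropy assertion hinges on a Frobenius eigenvalue check. Any $G_{K_\gl}$-equivariant map $R\otimes R\to R(1)$ or $R(1)\otimes R(1)\to R(1)$ must vanish, because on the source $\Frob_\gl$ acts by $1$ or $\N(\gl)^2$ while on the target it acts by $\N(\gl)$, and the differences $\N(\gl)-1$ and $\N(\gl)(\N(\gl)-1)$ are units in $R$. Consequently the alternating pairing $T\otimes T\to R(1)$ restricts to zero on $R\otimes R$ and on $R(1)\otimes R(1)$, so the induced cup-product pairings on $H^1_\unr(K_\gl,T)\times H^1_\unr(K_\gl,T)$ and on $H^1_\ord(K_\gl,T)\times H^1_\ord(K_\gl,T)$ vanish identically. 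Each summand is thus isotropic of cardinality $|R|$; since the full local Tate pairing on $H^1(K_\gl,T)$ (of cardinality $|R|^2$) is non-degenerate, each summand is forced to be maximal isotropic.

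The argument is a formal consequence of the module-level splitting together with standard local duality inputs, and I expect no serious obstacle. The most delicate step is the rank-one computation of $H^1(K_\gl,R(1))$; invoking local Tate duality against the previously computed $H^1(K_\gl,R)$ avoids any direct Kummer-theoretic calculation and sidesteps subtleties about the structure of $R$ beyond its self-injectivity.
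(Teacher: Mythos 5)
Your proof is correct, and the overall skeleton (split $T$ as a $G_{K_\gl}$-module, take cohomology, identify the summands with $H^1_\unr$ and $H^1_\ord$, compute ranks, then check isotropy) matches the paper's. There are two genuine variations in the sub-arguments worth noting. First, for the rank-one computation of $H^1(K_\gl,R(1))$ you invoke local Tate duality against the already-computed $H^1(K_\gl,R)\iso R$, using the self-injectivity of $R$; the paper instead computes this directly via local Kummer theory, observing that $\N(\gl)\not\equiv 1\pmod p$ forces the pro-$p$ completion of $K_\gl^\times$ to be $\Z_p$, so $H^1(K_\gl,R(1))\iso K_\gl^\times\otimes R\iso R$. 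Both are one-line arguments; yours trades the explicit Kummer input for an extra appeal to duality. Second, and more interestingly, for maximal isotropy the paper simply observes that each rank-one summand $R$ and $R(1)$ of $T$ is automatically isotropic because the pairing $T\times T\to R(1)$ is \emph{alternating}, whereas you deduce vanishing of the restricted pairings from a Frobenius-eigenvalue mismatch ($\N(\gl)-1$ and $\N(\gl)(\N(\gl)-1)$ being units). Your eigenvalue argument is slightly more robust: it does not use the alternating hypothesis at all, so it applies verbatim in the setting of the paper's \S\ref{variant}, where the pairing is merely symmetric and the paper asserts (without re-deriving) that Lemma~\ref{local freeness} still holds. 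That is a genuine gain in generality, even if the paper's route is marginally shorter for the alternating case at hand.
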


\begin{proof}
By \cite[Lemma 1.3.2]{rubin}, evaluation of cocycles at $\Frob_\gl$
induces an isomorphism
$$
H^1_\unr(K_\gl,T)\iso T/(\Frob_\gl-1)T \iso R.
$$
By local class field theory
$$
H^1(K_\gl,R)\iso \Hom(\Gal(K_\gl^\unr/K_\gl),R)\iso R,
$$
again by evaluation at $\Frob_\gl$.  Thus $H^1_\unr(K_\gl,T)$
is exactly the image of $H^1(K_\gl,R)$, and is free of rank one.
Since $\N(\gl)\not\equiv 1\pmod{p}$, the pro-$p$-completion of 
$K_\gl^\times$ is canonically isomorphic to $\Z_p$, and  so
$$
H^1_\ord(K_\gl,T)\iso H^1(K_\gl, R(1))\iso R
$$ by local Kummer theory.  
The submodules $R$ and $R(1)$ of $T$ are each maximal isotropic under the
pairing $T\times T\map{}R(1)$, and so the same is true of the
spaces $H^1_\unr(K_\gl,T)$ and $H^1_\ord(K_\gl,T)$ under the cup product.
\end{proof}

\begin{Def}\label{cartesian def}
A Selmer structure $(\sel,\Sigma_\sel)$ is \emph{cartesian}
if for every quotient $T/\gm^i T$ of $T$, every place $w\in\Sigma_\sel$,
and any generator $\pi\in\gm$, the isomorphism
$$
T/\gm^i T\map{\pi^{\len(R)-i}}T[\gm^i]
$$
induces an isomorphism
$
H^1_{\sel}(K_w,T/\gm^i) \iso H^1_{\sel}(K_w,T[\gm^i]).
$
\end{Def}

\begin{Rem}
A Selmer structure $(\sel,\Sigma_\sel)$ is cartesian if and only if 
it defines a cartesian local condition, for every $w\in\Sigma_\sel$,
on the quotient category
$\mathrm{Quot}(T)$ in the sense of \cite[Definition 1.1.4]{mazur-rubin}.
\end{Rem}

\begin{Hyp}\label{cartesian}
For the remainder of \S \ref{S:Euler Systems} 
we make the following assumptions
\begin{enumerate}
\item the residual representation $T/\gm T$ is absolutely irreducible,
\item $\sel$ is cartesian.
\end{enumerate}
\end{Hyp}

\begin{Def}
For any $\mathfrak{abc}\in\cn$ we define a Selmer structure
$(\sel^\ga_\gb(\mathfrak{c}), \Sigma_{\sel^\ga_\gb(\mathfrak{c})})$
as follows: $\Sigma_{\sel^\ga_\gb(\mathfrak{c})}$ is $\Sigma_\sel$
together with all prime divisors of $\mathfrak{abc}$, 
$$
H^1_{\sel^\ga_\gb(\mathfrak{c})}(K_w,T)=H^1_\sel(K_w,T)
$$
for $w$ prime to $\mathfrak{abc}$, and 
$$
H^1_{\sel^\ga_\gb(\mathfrak{c})}(K_\gl,T)
=\left\{\begin{array}{ll}
H^1(K_\gl,T) & \mathrm{if\ }\gl|\ga\\
0 & \mathrm{if\ }\gl|\gb\\
H^1_\ord(K_\gl,T) & \mathrm{if\ }\gl|\mathfrak{c}.
\end{array}\right.
$$
If any one of $\ga$, $\gb$, $\mathfrak{c}$ is the empty product
we omit it from the notation.
\end{Def}

\begin{Lem}\label{subquotients}
The Selmer structure $\sel(\gn)$ is cartesian for any $\gn\in\cn$.
For any choice of generator $\pi\in\gm$ and any $0\le i\le\len(R)$, the 
composition
$$
T/\gm^i T\map{\pi^{\len(R)-i}}T[\gm^i]\map{}T
$$
induces isomorphisms
$$
\Sel_{\sel(\gn)}(K,T/\gm^i) \iso \Sel_{\sel(\gn)}(K,T[\gm^i])
\iso \Sel_{\sel(\gn)}(K,T)[\gm^i].
$$
\end{Lem}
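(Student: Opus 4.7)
The plan is to verify that $\sel(\gn)$ is cartesian and then deduce the two displayed isomorphisms from that property together with the long exact sequence in cohomology. For the cartesian property, the local condition at $w \in \Sigma_\sel$ is inherited from $\sel$ and so is cartesian by Hypothesis \ref{cartesian}; the only real work is at a prime $\gl \mid \gn$. Here I would use the canonical decomposition $T \iso R \oplus R(1)$ of $G_{K_\gl}$-modules appearing in the proof of Lemma \ref{local freeness}. This splitting descends to both $T[\gm^i]$ and $T/\gm^i T$, and is preserved by $T/\gm^i T \xrightarrow{\pi^{\len(R)-i}} T[\gm^i]$. Re-running the Kummer-theoretic computation of Lemma \ref{local freeness} with coefficients $R(1)/\gm^i$ and $R(1)[\gm^i]$---using $\N(\gl) - 1 \in R^\times$ to kill the relevant $H^0$'s, and the injectivity of $H^1(K_\gl, R[\gm^i]) \to H^1(K_\gl, R)$ coming from the analogous long exact sequence on the unramified side---shows that the propagated ordinary conditions on $T[\gm^i]$ and $T/\gm^i T$ are precisely the full $R(1)$-summands of their respective $H^1$'s. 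These summands are exchanged by $\pi^{\len(R)-i}$, so the cartesian property at $\gl$ follows.

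With cartesianness established, the first isomorphism is formal: the Galois-equivariant iso $T/\gm^i T \xrightarrow{\pi^{\len(R)-i}} T[\gm^i]$ induces isomorphisms on every $H^1(K_w, -)$, these identify $H^1_{\sel(\gn)}(K_w, T/\gm^i T)$ with $H^1_{\sel(\gn)}(K_w, T[\gm^i])$ at each $w \in \Sigma_{\sel(\gn)}$ by cartesianness, and at the remaining places both propagated conditions coincide with the unramified one by Remark \ref{propagation}. Hence the iso descends to an isomorphism of global Selmer submodules.

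For the second isomorphism I would use the short exact sequence
$$
0 \to T[\gm^i] \to T \xrightarrow{\pi^i} \gm^i T \to 0.
$$
Absolute irreducibility of $T/\gm T$ forces $H^0(K, T/\gm T) = 0$, and devissage along the filtration $T \supset \gm T \supset \cdots \supset \gm^{\len(R)} T = 0$---whose successive quotients are each isomorphic to $T/\gm T$---then gives $H^0(K, T) = 0$ and, in the same way, $H^0(K, \gm^i T) = 0$. The long exact cohomology sequence therefore collapses to an identification $H^1(K, T[\gm^i]) \iso H^1(K, T)[\gm^i]$, and by the definition of the propagated Selmer structure on $T[\gm^i] \subset T$ a class on the left is $\sel(\gn)$-Selmer precisely when its image in $H^1(K, T)$ is, so the identification restricts to the claimed isomorphism of Selmer groups.

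The main technical obstacle will be the verification of the cartesian property at the new primes $\gl \mid \gn$; everything else is a formal consequence once one has the canonical splitting of $T$ over $K_\gl$ and the $H^0$-vanishing coming from absolute irreducibility.
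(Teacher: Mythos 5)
Your argument is correct and follows essentially the same route as the paper: verify the cartesian property at $w\in\Sigma_\sel$ via Hypothesis \ref{cartesian} and at $\gl\mid\gn$ via the canonical splitting $T\iso R\oplus R(1)$ over $K_\gl$, then deduce the two isomorphisms. The paper compresses the second half into a citation of \cite[Lemma 3.5.4]{mazur-rubin}, which you have unpacked by hand via the long exact sequences for $0\to T[\gm^i]\to T\to\gm^i T\to 0$ and $0\to\gm^i T\to T\to T/\gm^i T\to 0$; the only small elision is that identifying $\ker\bigl(\pi^i_*\colon H^1(K,T)\to H^1(K,\gm^iT)\bigr)$ with $H^1(K,T)[\gm^i]$ also uses $H^0(K,T/\gm^i T)=0$ (for the injectivity of $H^1(K,\gm^iT)\to H^1(K,T)$), but that follows from the same devissage from absolute irreducibility that you already invoke.
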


\begin{proof}
For any prime $w$ not dividing $\gn$,
$H^1_{\sel(\gn)}(K_w,T)$
satisfies the cartesian property by Hypothesis \ref{cartesian}.  
For $\gl|\gn$, the cartesian property follows from the 
canonical isomorphism $H^1_\ord(K_\gl,T)\iso R$
used in the proof of Lemma \ref{local freeness}.  The second claim
now follows as in \cite[Lemma 3.5.4]{mazur-rubin}.
\end{proof}

\begin{Prop}\label{structure}
For any $\gn\in\cn$ there is a (non-canonical) decomposition
$$
\Sel_{\sel(\gn)}\iso R^{e(\gn)}\oplus M_\gn\oplus M_\gn
$$
with $e(\gn)\in\{0,1\}$.
\end{Prop}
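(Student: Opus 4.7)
The plan is to equip $\Sel_{\sel(\gn)}(K,T)$ with an alternating $R$-valued pairing coming from global duality, and then to invoke the classification of finite modules over a principal Artinian local ring carrying such a pairing.

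First I would verify that the modified Selmer structure $\sel(\gn)$ is itself self-dual and cartesian. At each prime $\gl\mid\gn$ the chosen local condition $H^1_\ord(K_\gl,T)$ is maximal isotropic under local Tate duality by Lemma \ref{local freeness}, and the canonical isomorphism $H^1_\ord(K_\gl,T)\iso R$ produced in the proof of that lemma also yields the cartesian property at $\gl$. Both properties at places not dividing $\gn$ are part of the standing hypotheses on $\sel$. Using the perfect alternating $R(1)$-valued pairing on $T$ together with Poitou-Tate global duality, I would then construct a Cassels-Tate style alternating pairing
$$
\langle\cdot,\cdot\rangle_\gn:\Sel_{\sel(\gn)}(K,T)\times \Sel_{\sel(\gn)}(K,T)\map{}R,
$$
the alternating property being a consequence of the alternating structure on $T$ and the assumption $p>2$.

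By the general classification of finite modules over a principal Artinian local ring endowed with an alternating pairing, any such module splits orthogonally as the radical $M_0$ of the pairing plus a direct sum of hyperbolic planes of the form $R/\gm^a\oplus R/\gm^a$; collecting these planes provides a summand of the required shape $M_\gn\oplus M_\gn$. The proposition therefore reduces to showing that the radical $M_0$ is either zero or free of rank one over $R$.

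The main obstacle is precisely this bound on $M_0$. I would descend to the residue field: by Lemma \ref{subquotients} the cartesian property identifies $\Sel_{\sel(\gn)}(K,T)[\gm]$ with $\Sel_{\sel(\gn)}(K,T/\gm T)$, on which the pairing restricts to an alternating form over $\F_p$. A Chebotarev-theoretic argument, using the absolute irreducibility of $T/\gm T$ to produce admissible auxiliary primes at which one can compare propagated Selmer structures, together with the self-duality of $\sel(\gn)$ and a global Euler characteristic computation, would then bound the dimension of the residual radical by $1$. Once this residual bound is in hand, the cartesian property lifts it to the full module: $M_0$ must be cyclic of length either $0$ or $\len(R)$, giving $M_0\iso R^{e(\gn)}$ with $e(\gn)\in\{0,1\}$, which is the desired decomposition.
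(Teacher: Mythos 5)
Your high-level strategy—equip $\Sel_{\sel(\gn)}$ with an alternating Cassels--Tate style pairing valued in $R$, split off hyperbolic planes $(R/\gm^a)^2$, and then control the radical—matches what the paper invokes by citing \cite[Theorem 1.4.2]{me}, and your preliminary verification that $\sel(\gn)$ is again self-dual and cartesian is both correct and necessary.

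However, the crucial step of showing that the radical $M_0$ is either zero or free of rank one has a genuine gap, in two places. First, passing to the residue field does not do what you want: the restriction of the $R$-valued alternating form to $\Sel_{\sel(\gn)}(K,T)[\gm]\iso \Sel_{\sel(\gn)}(K,T/\gm T)$ has a much larger radical than $M_0[\gm]$, because the $\gm$-torsion of every hyperbolic plane $R/\gm^a\oplus R/\gm^a$ with $a\ge 2$ is killed by the restricted pairing (the values land in $\gm^{k+a-2}R=0$ once $a\ge 2$). So bounding the residual radical by $1$ is false whenever the Selmer group has a hyperbolic summand of length $\ge 2$, and in any case would not localize $M_0$ itself. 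Second, even granting that $M_0$ is cyclic, there is no reason it should have length $0$ or $\len(R)$; a cyclic $R$-module can have any length in between, and a radical $R/\gm^\ell$ with $0<\ell<\len(R)$ would give a decomposition $\Sel_{\sel(\gn)}\iso R/\gm^\ell\oplus(\text{hyperbolic})$ which is \emph{not} of the form $R^{e}\oplus M\oplus M$. The ``Chebotarev plus Euler characteristic'' appeal is too vague to close either gap: Chebotarev produces useful auxiliary primes, but without an Euler system in hand it does not by itself bound the radical of the pairing, and a global Euler characteristic computes a difference of Selmer ranks, not the size of the radical.

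What actually closes the argument in \cite[Theorem 1.4.2]{me} is a precise identification of the radical of the (Flach) pairing as the image of a Selmer group for a larger coefficient module reducing onto $T$, together with the cartesian hypothesis; this identification is what forces the radical to be a free $R$-module of rank at most one, rather than a general cyclic module. Your write-up correctly reduces the proposition to that statement about $M_0$ but does not actually prove it, so as written the proposal is incomplete at exactly the point where the substance lies.
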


\begin{proof}
This follows from the existence of a modified form of the
Cassels-Tate pairing, together with the self-duality hypotheses
on $T$ and $\sel$; see \cite[Theorem 1.4.2]{me}.
\end{proof}

\begin{Def}\label{stub}
Let $\cn^\even\subset\cn$ be the subset for which $e(\gn)=0$,
and $\cn^\odd\subset\cn$ the subset for which $e(\gn)=1$.
For $\gn\in\cn$ we define the \emph{stub module}
$$
\Stub_{\gn}=\left\{\begin{array}{ll}
\gm^{\len(M_\gn)}\cdot R & \mathrm{if\ }\gn\in\cn^\even\\
\gm^{\len(M_\gn)}\cdot \Sel_{\sel(\gn)}(K,T)
& \mathrm{if\ }\gn\in\cn^\odd \end{array}\right.
$$
with $M_\gn$ as in Proposition \ref{structure}. 
Note that $\Stub_\gn$ is a cyclic $R$-module for every $\gn\in\cn$. 
\end{Def}

The following proposition is a consequence of Poitou-Tate
global duality, and is similar to \cite[Lemma 1.5.8]{me} and 
\cite[Lemma 4.1.6]{mazur-rubin}. Our self-duality assumptions, 
together with the fact that the local conditions $H^1_\unr(K_\gl,T)$ and
$H^1_\ord(K_\gl,T)$ have rank one, give a much stronger result.

\begin{Prop}\label{global duality}
For any $\gn\gl\in\cn$ there are non-negative integers $a,b$ with
$a+b=\len(R)$ such that in the diagram of inclusions
$$
\xymatrix{
 & {\Sel_{\sel^\gl(\gn)}}   \\
{\Sel_{\sel(\gn)}}\ar[ur]^b  &  &  {\Sel_{\sel(\gn\gl)}}\ar[ul]_a  \\
 & {\Sel_{\sel_\gl(\gn)}\ar[ul]^a\ar[ur]_b  }
}
$$
the labels on the arrows are the lengths of the respective 
quotients.  All four quotients are cyclic $R$-modules
and
\begin{equation}\label{pretty diagram}
a=\len(\loc_\gl(\Sel_{\sel(\gn)}))
\hspace{1cm}
b=\len(\loc_\gl(\Sel_{\sel(\gn\gl)})).
\end{equation}
\end{Prop}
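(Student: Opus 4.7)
The plan is to derive the entire proposition from Poitou--Tate global duality, together with the self-dual hypothesis and the explicit structure of the local conditions at $\gl$ provided by Lemma \ref{local freeness}.

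First, I would apply the standard Poitou--Tate five-term exact sequence (cf.\ \cite[Thm.~2.3.4]{mazur-rubin}) to each pair of Selmer structures $\sel_1 \le \sel_2$ among the four vertices that differ only at $\gl$:
$$0 \to \Sel_{\sel_1} \to \Sel_{\sel_2} \to H^1_{\sel_2}(K_\gl,T)/H^1_{\sel_1}(K_\gl,T) \to \Sel_{\sel_1^*}^\vee \to \Sel_{\sel_2^*}^\vee \to 0.$$
Because all local conditions of $\sel(\gn)$ and $\sel(\gn\gl)$ are maximal isotropic (by Lemma \ref{local freeness} and the hypotheses on $\sel$), these two Selmer structures are self-dual, while the duality swaps $\sel^\gl(\gn)$ and $\sel_\gl(\gn)$. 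In each of the four instances relevant to the diamond, the middle term is one of $H^1_\unr(K_\gl,T)$, $H^1_\ord(K_\gl,T)$, $H^1(K_\gl,T)/H^1_\unr(K_\gl,T)$, or $H^1(K_\gl,T)/H^1_\ord(K_\gl,T)$; each is free of rank one over $R$ by Lemma \ref{local freeness}, and this yields the cyclicity of all four quotients at once.

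Next, I would apply the same sequence with $\sel_1=\sel_\gl(\gn)$ and $\sel_2=\sel^\gl(\gn)$ to show that the image $I := \loc_\gl(\Sel_{\sel^\gl(\gn)}) \subset H^1(K_\gl,T)$ is Lagrangian under the local Tate pairing. Indeed, by exactness $I$ is the kernel of the map $H^1(K_\gl,T) \to \Sel_{\sel^\gl(\gn)}^\vee$ sending $z$ to the functional $x \mapsto \langle z, \loc_\gl x\rangle$, and this kernel is precisely the annihilator $I^\perp$. Thus $I=I^\perp$ and $\len(I)=\len(R)$. Writing $I_\unr := I \cap H^1_\unr(K_\gl,T)$ and $I_\ord := I \cap H^1_\ord(K_\gl,T)$, one immediately identifies $\loc_\gl(\Sel_{\sel(\gn)})=I_\unr$ and $\loc_\gl(\Sel_{\sel(\gn\gl)})=I_\ord$; meanwhile, the quotients $\Sel_{\sel^\gl(\gn)}/\Sel_{\sel(\gn)}$ and $\Sel_{\sel^\gl(\gn)}/\Sel_{\sel(\gn\gl)}$ are identified with the images of $I$ under projection to $H^1_\ord$ and $H^1_\unr$ respectively.

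The main obstacle is to show that $I$ splits as $I_\unr \oplus I_\ord$; this will simultaneously give $a+b = \len(R)$ and equate the two pairs of diagonally opposite labels in the diamond. Since the pairing on $T$ is alternating, the induced local cup-product pairing on $H^1(K_\gl,T)$ is symmetric; relative to the decomposition $H^1_\unr \oplus H^1_\ord$ into maximal isotropic free rank-one summands, it has the shape $\langle (u,v),(u',v')\rangle = \gamma(u,v')+\gamma(u',v)$ for a perfect pairing $\gamma: H^1_\unr \times H^1_\ord \to R$. For any $(u,v)\in I$, taking $(u',v')=(u,v)$ in $I \subset I^\perp$ gives $2\gamma(u,v)=0$, and since $p>3$ makes $2$ a unit in $R$, we conclude $\gamma(u,v)=0$. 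A second application of the five-term sequence (to $\sel(\gn\gl) \subset \sel^\gl(\gn)$) identifies $I_\ord$ with the $\gamma$-annihilator in $H^1_\ord$ of the image of $I$ in $H^1_\unr$. Choosing $(u_0,v_0) \in I$ with $u_0$ a generator of that image, the vanishing $\gamma(u_0,v_0)=0$ forces $v_0 \in I_\ord$, so $(0,v_0) \in I$ and therefore $(u_0,0) \in I$; thus $I_\unr$ contains a generator of the image of $I$ in $H^1_\unr$, so these agree, and symmetrically $I_\ord$ equals the image of $I$ in $H^1_\ord$. Hence $I=I_\unr \oplus I_\ord$, from which both (\ref{pretty diagram}) and $a+b = \len(I_\unr \oplus I_\ord) = \len(I) = \len(R)$ follow immediately.
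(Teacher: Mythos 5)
Your proof is correct, and it relies on the same fundamental inputs as the paper's argument — Poitou--Tate duality, the rank-one local structure at $\gl$ from Lemma \ref{local freeness}, and the alternating self-duality of $T$ — but it reorganizes the key step in a more explicit and self-contained way. The paper proves that the cyclic quotient $\Sel_{\sel^\gl(\gn)}/\bigl(\Sel_{\sel(\gn)}+\Sel_{\sel(\gn\gl)}\bigr)$ is zero by constructing on it a nondegenerate alternating $R$-valued pairing (essentially the local Tate pairing of $\mathrm{pr}_\unr\circ\loc_\gl$ against $\mathrm{pr}_\ord\circ\loc_\gl$), deferring the verification of nondegeneracy to the proof of Lemma 1.5.7 of \cite{me}; the vanishing of the quotient then propagates the labels around the diamond, and $a+b=\len(R)$ is again cited from \cite{me} or \cite{mazur-rubin}. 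You instead prove the equivalent assertion that the Lagrangian $I=\loc_\gl(\Sel_{\sel^\gl(\gn)})$ splits as $I_\unr\oplus I_\ord$, extracting the decisive vanishing $\gamma(u,v)=0$ for $(u,v)\in I$ directly from $I\subseteq I^\perp$ together with the symmetry $\langle(u,v),(u,v)\rangle=2\gamma(u,v)$ and the invertibility of $2$ in $R$; both the diamond labels and $a+b=\len(R)$ then fall out of $\len(I)=\len(R)$ and $\len(I_\unr)+\len(I_\ord)=\len(I)$ in one stroke. Your version has the advantage of not outsourcing the nondegeneracy of the Cassels--Tate pairing, and it makes explicit exactly where $p>3$ (really $2\in R^\times$) and the alternating hypothesis on the pairing on $T$ enter; the paper's version is shorter because it quotes that machinery. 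Both derive cyclicity of the four quotients the same way, by injecting each into a free rank-one piece of the local cohomology at $\gl$.
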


\begin{proof}
Take (\ref{pretty diagram}) as the definition of $a$ and $b$,  so that
$a$ and $b$ are the lengths of the lower left and right quotients, 
respectively. The cyclicity of the quotients 
follows from Lemma \ref{local freeness};
for example the lower left quotient injects into $H^1_\unr(K_\gl,T)$.

Exactly as in the proof of \cite[Lemma 1.5.7]{me}, the quotient
\begin{equation}\label{global quotient}
\Sel_{\sel^\gl(\gn)}/\big(\Sel_{\sel(\gn)}+\Sel_{\sel(\gn\gl)}\big)
\end{equation}
admits a nondegenerate, alternating $R$-valued pairing.  
The pairing is defined as
follows: given $x,y\in\Sel_{\sel^\gl(\gn)}$, let $x'$ be the projection
of $\loc_\gl(x)$ to $H^1_\unr(K_\gl,T)$, and let $y'$ be the
projection of $\loc_\gl(y)$ to $H^1_\ord(K_{\gl},T)$.  The pairing
of $x$ and $y$ is then defined to be the local Tate pairing of $x'$ and $y'$.
The quotient  (\ref{global quotient}) is a 
cyclic $R$-module, and so the existence of such a pairing implies that
it is trivial.  

Directly from the definitions we have 
$$
\Sel_{\sel_\gl(\gn)}= \Sel_{\sel(\gn)}\cap \Sel_{\sel(\gl\gn)}.
$$
Combining this with the above,  it follows that the 
lower left quotient is isomorphic to the  upper right,
and the lower right is isomorphic to the upper left.  This
proves everything except for the claim $a+b=\len(R)$, which is a consequence
of global duality as in \cite[Lemma 1.5.8]{me} or 
\cite[Lemma 4.1.6]{mazur-rubin}.
\end{proof}

\begin{Cor}\label{rho}
Fix $\gn\in\cn$ and let $e(\gn)$ be as in Proposition \ref{structure}. 
The integer
$$
\rho(\gn)=
\mathrm{dim}_{R/\gm}\big(\Sel_{\sel(\gn)}(K,T/\gm T)\big)
$$
satisfies $e(\gn)\equiv\rho(\gn)\pmod{2}$, and for any $\gl\in\cl$ prime
to $\gn$
\begin{eqnarray*}
\rho(\gn\gl)=\rho(\gn)+1 &\iff&
\loc_\gl(\Sel_{\sel(\gn)}(K,T/\gm T)=0\\
\rho(\gn\gl)=\rho(\gn)-1 &\iff&
\loc_\gl(\Sel_{\sel(\gn)}(K,T/\gm T)\not=0.
\end{eqnarray*}
The claim continues to hold, and the value of $\rho(\gn)$ 
remains unchanged, if one replaces $\Sel_{\sel(\gn)}(K,T/\gm T)$ by 
$\Sel_{\sel(\gn)}(K,T)[\gm]$ everywhere.
\end{Cor}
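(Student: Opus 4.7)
The plan is to reduce the corollary to Proposition \ref{global duality} applied to the residual representation $\bar T = T/\gm T$; over the field $R/\gm$ the notion of length coincides with $R/\gm$-dimension, so the length identity $a+b=\len(R)=1$ becomes an additive identity for $\rho$.

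First I would settle the preliminary identifications. By Lemma \ref{subquotients} with $i=1$, multiplication by $\pi^{\len(R)-1}$ gives an isomorphism $\Sel_{\sel(\gn)}(K,T/\gm T) \iso \Sel_{\sel(\gn)}(K,T)[\gm]$, so the two candidate definitions of $\rho(\gn)$ agree. For any finitely generated $R$-module $M$, the $R/\gm$-dimension of $M[\gm]$ equals the minimal number of cyclic summands of $M$; applied to the decomposition in Proposition \ref{structure} this gives $\rho(\gn) = e(\gn) + 2\dim_{R/\gm}(M_\gn/\gm M_\gn)$, proving the parity congruence $e(\gn)\equiv\rho(\gn)\pmod{2}$.

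Next I would invoke Proposition \ref{global duality} with $T$ replaced by $\bar T$, equipped with the Selmer structures propagated from those on $T$. Hypothesis \ref{cartesian}(a) gives absolute irreducibility of $\bar T$; the cartesian condition is vacuous over the field $R/\gm$; the local decomposition of Lemma \ref{local freeness} passes to $\bar T$ unchanged; and self-duality of the propagated structure at each $w\in\Sigma_\sel$ follows from the cartesian hypothesis together with the compatibility of the Tate pairing under reduction mod $\gm$. Since $\len(R/\gm)=1$, the proposition produces non-negative integers $\bar a,\bar b$ with $\bar a+\bar b=1$, in which $\bar a$ equals $\dim_{R/\gm}\loc_\gl(\Sel_{\sel(\gn)}(K,T/\gm T))$. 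Reading the square of inclusions, the four Selmer dimensions satisfy $\rho(\gn\gl)-\rho(\gn)=\bar b-\bar a\in\{\pm 1\}$: the value is $+1$ precisely when $\bar a=0$, and $-1$ precisely when $\bar a=1$. This is the required dichotomy; its restatement in terms of $\Sel_{\sel(\gn)}(K,T)[\gm]$ follows from the isomorphism noted above.

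The principal obstacle is in the third step: checking carefully that the propagated Selmer structure on $\bar T$ remains self-dual, and that the decorations $\sel(\gn)$, $\sel^\gl(\gn)$, $\sel_\gl(\gn)$ commute with propagation to $\bar T$. Both are standard consequences of the cartesian hypothesis (compare the argument of Lemma \ref{subquotients}), but a little bookkeeping is needed to state them cleanly before Proposition \ref{global duality} can be applied at the residual level.
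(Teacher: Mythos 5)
Your proposal follows the same route as the paper: apply Proposition \ref{global duality} with $T$ replaced by $T/\gm T$, so that $a+b=1$ and $\rho(\gn\gl)-\rho(\gn)=b-a\in\{\pm 1\}$, with the dichotomy governed by whether $\loc_\gl$ kills the residual Selmer group; and obtain the parity statement and the $[\gm]$-formulation from Proposition \ref{structure} and Lemma \ref{subquotients}. The extra bookkeeping you flag (self-duality and compatibility of the propagated structure on $\bar T$) is handled implicitly in the paper but your reasoning is sound and the argument is essentially identical.
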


\begin{proof}
Apply Proposition \ref{global duality} with $T$ replaced by $T/\gm T$.
Then $$\rho(\gn\gl)=\rho(\gn)-a+b,$$ $a+b=1$, and $a=0$ if and only if
$\loc_\gl$ kills $\Sel_{\sel(\gn)}(K,T/\gm T)$.  Combining this 
with Lemma \ref{subquotients} proves the claim.
\end{proof}

\begin{Rem}
Note that Corollary \ref{rho} implies that for $\gn\gl\in\cn$,
$$
\gn\in\cn^\even\iff \gn\gl\in\cn^\odd.
$$
We will use this repeatedly throughout.
\end{Rem}

\begin{Cor}\label{length induction}
Suppose $\gn\gl\in\cn$, and let $a$ and $b$ be as in Proposition 
\ref{global duality}.  Then
$$
\len(M_\gn)=\left\{\begin{array}{ll}
\len(M_{\gn\gl})+a&\mathrm{\ if\ } \gn\in\cn^\even\\
\len(M_{\gn\gl})-b&\mathrm{\ if\ } \gn\in\cn^\odd.
\end{array}\right.
$$
\end{Cor}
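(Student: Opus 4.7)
The plan is to extract the identity $\len(\Sel_{\sel(\gn\gl)}) - \len(\Sel_{\sel(\gn)}) = b - a$ from the length diagram in Proposition \ref{global duality} and combine it with the structural decomposition of Proposition \ref{structure} to read off $\len(M_\gn) - \len(M_{\gn\gl})$.

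First, the two upper inclusions in Proposition \ref{global duality} give
\[
\len(\Sel_{\sel^\gl(\gn)}) = \len(\Sel_{\sel(\gn)}) + b = \len(\Sel_{\sel(\gn\gl)}) + a,
\]
so $\len(\Sel_{\sel(\gn\gl)}) - \len(\Sel_{\sel(\gn)}) = b - a$. On the other hand, Proposition \ref{structure} says
\[
\len(\Sel_{\sel(\gm)}) = e(\gm)\cdot\len(R) + 2\len(M_\gm)
\]
for $\gm = \gn$ and $\gm = \gn\gl$. By the remark following Corollary \ref{rho}, exactly one of $\gn,\gn\gl$ lies in $\cn^\even$ and the other in $\cn^\odd$, so the pair $(e(\gn),e(\gn\gl))$ is either $(0,1)$ or $(1,0)$.

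Now I simply solve. If $\gn\in\cn^\even$ (so $e(\gn)=0$, $e(\gn\gl)=1$), the two displayed equations yield
\[
b - a = \len(R) + 2\bigl(\len(M_{\gn\gl}) - \len(M_\gn)\bigr),
\]
and substituting $\len(R) = a + b$ from Proposition \ref{global duality} gives $\len(M_\gn) - \len(M_{\gn\gl}) = a$, as desired. If instead $\gn\in\cn^\odd$, the same computation with signs reversed gives $\len(M_{\gn\gl}) - \len(M_\gn) = b$. Both cases are purely bookkeeping once the two key inputs are in place.

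The only non-routine step is confirming that Proposition \ref{global duality} does indeed apply in both orientations (i.e., that $a$ and $b$ are the same numerical invariants whether we regard $\gn$ or $\gn\gl$ as the ``base'' level), but this is automatic because $a$ and $b$ are defined in (\ref{pretty diagram}) symmetrically through the two localization maps at $\gl$. I expect no obstacle beyond this check; the corollary is essentially a linear algebra consequence of the preceding two propositions and the parity observation after Corollary \ref{rho}.
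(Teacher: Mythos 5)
Your proof is correct and follows essentially the same route as the paper: extract $\len(\Sel_{\sel(\gn\gl)})-\len(\Sel_{\sel(\gn)})=b-a$ from the inclusion diagram of Proposition \ref{global duality}, apply the structural decomposition of Proposition \ref{structure} together with the parity remark after Corollary \ref{rho}, and use $a+b=\len(R)$ to solve. The closing worry about ``orientations'' is a non-issue since $a$ and $b$ are already defined symmetrically in (\ref{pretty diagram}), as you yourself note.
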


\begin{proof}
Suppose $\gn\in\cn^\even$.  Then
\begin{eqnarray*}
2\cdot \len(M_\gn) &=& \len(\Sel_{\sel(\gn)})  \\
&=&  \len(\Sel_{\sel(\gn\gl)})-b+a \\
&=&  \len(\Sel_{\sel(\gn\gl)})+2a-\len(R) \\
&=&  2\cdot \len(M_{\gn\gl}) +2a.
\end{eqnarray*}
The case $\gn\in\cn^\odd$ is similar.
\end{proof}

\begin{Cor}\label{stub shift}
Suppose $\gn\gl\in\cn$. 
There is an isomorphism
of $R$-modules
\begin{eqnarray*}
\loc_\gl(\Stub_\gn) \iso \Stub_{\gn\gl}
& &\mathrm{if\ }\gn\in\cn^\odd \\
\loc_\gl(\Stub_{\gn\gl}) \iso \Stub_\gn
& &\mathrm{if\ }\gn\in\cn^\even.
\end{eqnarray*}
\end{Cor}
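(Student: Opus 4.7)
The plan is to reduce each isomorphism to an equality of lengths of cyclic $R$-modules; cyclicity is already noted in Definition \ref{stub}, so this determines the isomorphism class.

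First consider $\gn \in \cn^\odd$, so that $\gn\gl \in \cn^\even$ by Corollary \ref{rho}. Proposition \ref{structure} gives a decomposition $\Sel_{\sel(\gn)} \iso R \oplus M_\gn \oplus M_\gn$, and since $\gm^{\len(M_\gn)}$ annihilates each $M_\gn$, the stub $\Stub_\gn = \gm^{\len(M_\gn)} \Sel_{\sel(\gn)}$ corresponds under this decomposition to $\gm^{\len(M_\gn)} R \oplus 0 \oplus 0$. By $R$-linearity of $\loc_\gl$, $\loc_\gl(\Stub_\gn) = \gm^{\len(M_\gn)} \cdot \loc_\gl(\Sel_{\sel(\gn)})$, and by Proposition \ref{global duality} the latter is a cyclic submodule of $H^1_\unr(K_\gl, T) \iso R$ of length $a$. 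Hence $\loc_\gl(\Stub_\gn)$ has length $\max(0, a - \len(M_\gn))$. On the other side, $\Stub_{\gn\gl} = \gm^{\len(M_{\gn\gl})} R$ has length $\max(0, \len(R) - \len(M_{\gn\gl}))$; Corollary \ref{length induction} applied with $\gn \in \cn^\odd$ gives $\len(M_{\gn\gl}) = \len(M_\gn) + b$, and the identity $a + b = \len(R)$ from Proposition \ref{global duality} converts this into $\max(0, a - \len(M_\gn))$ as well. The two lengths agree, and since cyclic modules over the principal Artinian local ring $R$ are classified up to isomorphism by their length, $\loc_\gl(\Stub_\gn) \iso \Stub_{\gn\gl}$.

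The case $\gn \in \cn^\even$ is entirely parallel, with the roles of $\gn$ and $\gn\gl$ interchanged: Proposition \ref{structure} is applied to $\Sel_{\sel(\gn\gl)} \iso R \oplus M_{\gn\gl} \oplus M_{\gn\gl}$, the local condition at $\gl$ for $\Sel_{\sel(\gn\gl)}$ is $H^1_\ord(K_\gl, T) \iso R$ rather than $H^1_\unr$, the second formula of (\ref{pretty diagram}) supplies length $b$ for $\loc_\gl(\Sel_{\sel(\gn\gl)})$, and Corollary \ref{length induction} now provides $\len(M_\gn) = \len(M_{\gn\gl}) + a$. The analogous bookkeeping yields the matching length $\max(0, \len(R) - \len(M_\gn))$ on both sides. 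No new technical input is required; the content of the corollary is entirely encoded in Propositions \ref{structure} and \ref{global duality} together with Corollary \ref{length induction}, with the only mild subtlety being to keep track that one of the two stubs has its $M$-summands killed by the exponent $\len(M)$ built into Definition \ref{stub}.
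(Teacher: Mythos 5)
Your argument is correct and is essentially the paper's own proof, just phrased in terms of equality of lengths rather than equality of annihilators: both reduce the claim to cyclicity plus the length bookkeeping of Proposition \ref{global duality} and Corollary \ref{length induction}. No meaningful difference in approach or content.
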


\begin{proof}
Suppose $\gn\in\cn^\odd$. Since the modules in question are cyclic, it
suffices to check that they have the same 
annihilator in $R$.  The image of $\Stub_\gn$ in $H^1_\unr(K_\gl,T)$
is annihilated by $\gm^i$ if and only if $\gm^{i+\len(M_\gn)}$
kills the lower left quotient in the diagram of Proposition 
\ref{global duality}, that is, if and only if
$a\le i+\len(M_\gn)$.  By Corollary \ref{length induction}, 
this is equivalent to $\len(R)\le i+ \len(M_{\gn\gl})$, which is 
equivalent to $\gm^{i}\cdot \Stub_{\gn\gl}=0$.  
The case $\gn\in\cn^\even$ is similar.
\end{proof}


\subsection{Euler systems}
\label{esb subsection}


Continue to assume that Hypothesis \ref{cartesian} holds, as well as

\begin{Hyp}\label{useful primes}
For any $c\in H^1(K,T/\gm T)$ there are infinitely many $\gl\in\cl$ such that
$\loc_\gl(c)\not=0$.
\end{Hyp}

\begin{Def}\label{es}
A \emph{bipartite Euler system of odd type} for $(T,\sel,\cl)$ 
is a pair of families
$$
\{ \esodd_\gn\in\Sel_{\sel(\gn)}(K,T) \mid \gn\in\cn^\odd \}
\hspace{1cm}
\{ \eseven_\gn\in R \mid \gn\in\cn^\even \}
$$
related by the first and second reciprocity laws:
\begin{enumerate}
\item for any $\gn\gl\in\cn^\odd$, there exists an isomorphism of 
$R$-modules  
$$
R/(\eseven_\gn)\iso
H^1_\ord(K_{\gl},T)/R\cdot\loc_\gl(\esodd_{\gn\gl}),
$$
\item for any $\gn\gl\in\cn^\even$, there exists an isomorphism of 
$R$-modules  
$$
R/(\eseven_{\gn\gl})\iso
H^1_\unr(K_{\gl},T)/R\cdot\loc_\gl(\esodd_{\gn}).
$$
\end{enumerate}
A \emph{bipartite Euler system of even type} is defined in the same way,
but with even and odd interchanged everywhere in the definition.
\end{Def}

From here on we drop the adjective ``bipartite'', and simply call such a 
pair of families an Euler system.
An Euler system (of even or odd type) is \emph{nontrivial} if 
$\eseven_\gn\not=0$
for some $\gn$ (of the appropriate type).  By the reciprocity laws
and the following lemma, this is equivalent to $\esodd_\gn\not=0$
for some $\gn$ of the appropriate type.

\begin{Lem}\label{local surjectivity}
For any $\gn\in\cn$ and any cyclic $R$-submodule 
$C\subset \Sel_{\sel(\gn)}$, there are
infinitely many $\gl\in\cl$ such that $\loc_\gl$ takes $C$
injectively into $H^1_\unr(K_\gl,T)$. If $C$ is free of rank one then
for such any such  $\gl$, $\loc_\gl$ takes $C$ isomorphically onto
$H^1_\unr(K_\gl,T)$.
\end{Lem}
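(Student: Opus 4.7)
The plan is to reduce injectivity of $\loc_\gl|_C$ to a residual nonvanishing statement at the $T/\gm T$ level, and then invoke Hypothesis \ref{useful primes}. To set things up, fix a generator $c$ of $C$ and let $k=\len(C)$ (assuming $C\neq 0$, otherwise the claim is vacuous). For $\gl\in\cl$ coprime to $\gn$, the inclusion $C\subset\Sel_{\sel(\gn)}$ automatically forces $\loc_\gl(C)\subset H^1_\unr(K_\gl,T)$, which is free of rank one over $R$ by Lemma \ref{local freeness}; hence $\loc_\gl|_C$ is injective iff $\loc_\gl(c')\neq 0$ in $H^1_\unr(K_\gl,T)\iso R$, where $c':=\pi^{k-1}c\in C[\gm]$.

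Next, I would produce a global class $\bar c\in H^1(K,T/\gm T)$ corresponding to $c'$ via the canonical isomorphism
$$
H^1(K,T/\gm T)\xrightarrow{\pi^{\len(R)-1}}H^1(K,T[\gm])\iso H^1(K,T)[\gm].
$$
The first arrow is induced by the $R[G_K]$-module isomorphism $T/\gm T\iso T[\gm]$ (available because $T$ is $R$-free), and the second is the inclusion-induced map, shown to be injective with image exactly the $\gm$-torsion by combining the long exact sequences of $0\to T[\gm]\to T\to \pi T\to 0$ and $0\to \pi T\to T\to T/\gm T\to 0$ with the vanishings $H^0(K,T/\gm T)=H^0(K,\pi T)=0$ forced by absolute irreducibility (Hypothesis \ref{cartesian}(a)). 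Since $c'\neq 0$, the preimage $\bar c$ is nonzero, so Hypothesis \ref{useful primes} furnishes infinitely many $\gl\in\cl$ (which I may assume coprime to $\gn$, by discarding finitely many) with $\loc_\gl(\bar c)\neq 0$.

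To transfer this nonvanishing back to $\loc_\gl(c')$, I would restrict to unramified cohomology, because the local analogue of the above identification can fail ($H^0(K_\gl,\pi T)\neq 0$ for $\gl\in\cl$, since $\Frob_\gl$ has eigenvalue $1$). Two small checks suffice. First, $\loc_\gl(\bar c)$ is itself unramified: since $T$ is unramified at $\gl\notin\Sigma_\sel$, inertia acts trivially on $T$ and on $T/\gm T$, so $H^1(I_\gl,T/\gm T)\to H^1(I_\gl,T)$ identifies with a map of $\Hom$-groups induced by the injective composite $T/\gm T\iso T[\gm]\hookrightarrow T$, hence is injective; combined with the unramifiedness of $\loc_\gl(c')$ (because $c'\in\Sel_{\sel(\gn)}$), this forces $\loc_\gl(\bar c)\in H^1_\unr(K_\gl,T/\gm T)$. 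Second, on unramified subgroups the composite $H^1_\unr(K_\gl,T/\gm T)\iso R/\gm\xrightarrow{\pi^{\len(R)-1}} R\iso H^1_\unr(K_\gl,T)$ is injective, so $\loc_\gl(\bar c)\neq 0$ implies $\loc_\gl(c')\neq 0$. This gives the injectivity claim. For the final assertion, when $C$ is free of rank one, $C$ and $H^1_\unr(K_\gl,T)$ are cyclic $R$-modules of the same length $\len(R)$, so any injection between them is automatically an isomorphism.

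The main obstacle I anticipate is precisely this local failure of the global identification $H^1(K,T/\gm T)\iso H^1(K,T)[\gm]$: at primes in $\cl$ the Frobenius eigenvalue $1$ obstructs the clean local analogue, and one must descend to unramified subgroups and exploit the fact that $T$ is unramified at those primes to salvage the implication $\loc_\gl(\bar c)\neq 0\Rightarrow\loc_\gl(c')\neq 0$. Everything else is length bookkeeping or a direct appeal to the stated hypotheses.
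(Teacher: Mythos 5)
Your proof is correct and takes essentially the same route as the paper: reduce to a residual nonvanishing statement at the $T/\gm T$ level via the identification of Lemma \ref{subquotients}, invoke Hypothesis \ref{useful primes}, and finish the free rank-one case by comparing lengths via Lemma \ref{local freeness}. The paper's proof is just the terse version of this (``Let $i$ be maximal such that $\gm^iC\neq 0$ \dots\ by Hypothesis \ref{useful primes} and Lemma \ref{subquotients}\dots'').

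One remark on the ``main obstacle'' you anticipate. You assert that the local analogue of $H^1(K,T/\gm T)\iso H^1(K,T)[\gm]$ ``can fail'' at $\gl\in\cl$ because $H^0(K_\gl,\pi T)\neq 0$. That criterion is not quite the right one, and in fact the local identification does \emph{not} fail here. The kernel of $H^1(K_\gl,T[\gm])\to H^1(K_\gl,T)$ is the image of the connecting map from $H^0(K_\gl,T/T[\gm])$, which vanishes precisely when $H^0(K_\gl,T)\to H^0(K_\gl,T/T[\gm])$ is surjective. Using the decomposition $T\iso R\oplus R(1)$ over $G_{K_\gl}$ and the fact that $\N(\gl)-1\in R^\times$, one gets $H^0(K_\gl,T)\iso R$ and $H^0(K_\gl,T/T[\gm])\iso R/\gm^{\len(R)-1}$, with the map between them the reduction map, which is surjective. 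So the local map is injective (indeed an isomorphism onto $H^1(K_\gl,T)[\gm]$), even though $H^0(K_\gl,T)\neq 0$. Your workaround --- restricting to unramified cohomology and computing there via evaluation at $\Frob_\gl$ --- is of course valid and arguably more transparent than chasing the long exact sequence; it just isn't forced on you by an actual failure of the naive local statement.
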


\begin{proof}
Let $i$ be maximal such that $\gm^i C\not=0$, so that 
$\gm^iC\subset\Sel_{\sel(\gn)}[\gm]$.
By Hypothesis \ref{useful primes} and Lemma \ref{subquotients}, 
there are infinitely many $\gl\in\cl$ prime to 
$\gn$ such that $\loc_\gl(\gm^iC)\not=0$.  For any such prime $\loc_\gl$
takes $C$ injectively into $H^1_\unr(K_\gl,T)$. 
The final claim is immediate from 
Lemma \ref{local freeness}.
\end{proof}

\begin{Prop}\label{no even es}
There are no nontrivial Euler systems for $(T,\sel,\cl)$ of even type.
\end{Prop}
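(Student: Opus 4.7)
The plan is a proof by contradiction exploiting a parity mismatch. In an even-type Euler system the nonvanishing $L$-values $\eseven_\gn$ are indexed by $\gn \in \cn^\odd$, which are exactly the vertices where $\Sel_{\sel(\gn)}(K, T)$ contains a free rank-one $R$-summand; I will use that summand to force a local vanishing incompatible with reciprocity.

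First I would invoke nontriviality to pick $\gn \in \cn^\odd$ with $\eseven_\gn \neq 0$. Since $e(\gn) = 1$, Proposition \ref{structure} gives $\Sel_{\sel(\gn)}(K,T) \iso R \oplus M_\gn \oplus M_\gn$, so I can fix a generator $c$ of the free rank-one summand. The cyclic module $R \cdot c$ is free of rank one, so Lemma \ref{local surjectivity} supplies infinitely many $\gl \in \cl$—hence some $\gl$ not dividing $\gn$—for which $\loc_\gl$ sends $R \cdot c$ isomorphically onto $H^1_\unr(K_\gl, T)$. For this $\gl$ the integer $a$ of Proposition \ref{global duality} attains its maximum value $\len(R)$, and hence $b = 0$; equivalently, $\loc_\gl\bigl(\Sel_{\sel(\gn\gl)}(K, T)\bigr) = 0$ inside $H^1_\ord(K_\gl, T)$. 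Now reciprocity law (1) for even type, applied to $(\gn, \gl)$ with $\gn\gl \in \cn^\even$, produces an isomorphism
$$R/(\eseven_\gn) \iso H^1_\ord(K_\gl, T)/R \cdot \loc_\gl(\esodd_{\gn\gl}).$$
Since $\eseven_\gn \neq 0$ the left-hand side is a proper quotient of $R$, forcing $R \cdot \loc_\gl(\esodd_{\gn\gl})$ to be a nonzero submodule of $H^1_\ord(K_\gl, T)$. But $\esodd_{\gn\gl}$ lies in $\Sel_{\sel(\gn\gl)}(K, T)$, whose localization at $\gl$ we have just shown to vanish—contradiction.

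I do not anticipate a serious technical obstacle: the argument is essentially a one-prime combination of Propositions \ref{structure} and \ref{global duality} with Lemma \ref{local surjectivity}. What the argument actually shows is that the roles of $\esodd$ and $\eseven$ cannot be swapped, because the vertices at which $\eseven$ is allowed to be nonzero are precisely the vertices at which $\Sel_{\sel(\gn)}$ carries the free $R$-summand needed to saturate $H^1_\unr(K_\gl, T)$ and kill the $H^1_\ord$-image required by reciprocity.
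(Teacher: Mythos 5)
Your proof is correct and takes essentially the same route as the paper: both fix $\gn\in\cn^\odd$ with $\eseven_\gn\neq 0$, use Proposition \ref{structure} to extract a free rank-one summand of $\Sel_{\sel(\gn)}$, apply Lemma \ref{local surjectivity} to find $\gl$ mapping that summand onto $H^1_\unr(K_\gl,T)$, and then invoke Proposition \ref{global duality} to force $\loc_\gl(\Sel_{\sel(\gn\gl)})=0$, contradicting the first reciprocity law. The only cosmetic difference is that you read off $b=0$ directly from the formula $b=\len(\loc_\gl(\Sel_{\sel(\gn\gl)}))$, whereas the paper phrases it as $\Sel_{\sel_\gl(\gn)}=\Sel_{\sel(\gn\gl)}$.
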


\begin{proof}
Given a nontrivial Euler system of even type we fix $\gn\in \cn^\odd$ 
such that $\eseven_\gn\not=0$.  In the notation of Proposition \ref{structure},
$e(\gn)=1$, and so $\Sel_{\sel(\gn)}(K,T)$
contains a free rank-one $R$-submodule $C$.  
If $\gl\in\cl$ is chosen as in
Lemma \ref{local surjectivity} then the natural injection
$$
\Sel_{\sel(\gn)}/\Sel_{\sel_\gl(\gn)}\hookrightarrow H^1_\unr(K_\gl,T)\iso R
$$
is an isomorphism, and Proposition \ref{global duality} implies that
$\Sel_{\sel_\gl(\gn)}=\Sel_{\sel(\gn\gl)}$.  In particular
$\loc_\gl(\esodd_{\gn\gl})=0$, violating the first reciprocity law.
\end{proof}

\begin{Prop}\label{annihilation}
Fix an Euler system of odd type for $(T,\sel,\cl)$, let $k$
be the length of $R$, and let
$M_\gn$ be as in Proposition \ref{structure}.  If $\eseven_\gn\not=0$
for some $\gn\in\cn^\even$ then $M_\gn$ is killed by $\gm^{k-1}$.
If $\esodd_\gn\not=0$
for some $\gn\in\cn^\odd$ then $M_\gn$ is killed by $\gm^{k-1}$.
\end{Prop}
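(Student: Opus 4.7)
The plan is to handle the two assertions in sequence: prove the even-type statement first by a contradiction argument combining Lemma \ref{local surjectivity}, Proposition \ref{global duality}, and the first reciprocity law; then deduce the odd-type statement by applying Lemma \ref{local surjectivity} once more and the second reciprocity law to reduce it to the even case.

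For the even assertion, fix $\gn \in \cn^\even$ with $\eseven_\gn \neq 0$ and suppose, toward contradiction, that $\gm^{k-1} M_\gn \neq 0$. Since $R$ is principal Artinian of length $k$, this forces $M_\gn$ to contain a free rank-one $R$-summand, and by Proposition \ref{structure} the same is true of $\Sel_{\sel(\gn)} \iso M_\gn \oplus M_\gn$. Lemma \ref{local surjectivity} then supplies a prime $\gl \in \cl$ coprime to $\gn$ along which $\loc_\gl$ maps this summand isomorphically onto $H^1_\unr(K_\gl, T)$; in the notation of Proposition \ref{global duality} this says $a(\gn, \gl) = k$, whence $b(\gn, \gl) = 0$. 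Since $b = \len(\loc_\gl(\Sel_{\sel(\gn\gl)}))$, the localization $\loc_\gl(\esodd_{\gn\gl})$ vanishes in $H^1_\ord(K_\gl, T)$, and the first reciprocity law degenerates to $R/(\eseven_\gn) \iso H^1_\ord(K_\gl, T) \iso R$, forcing $\eseven_\gn = 0$ and contradicting the hypothesis.

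For the odd assertion, fix $\gn \in \cn^\odd$ with $\esodd_\gn \neq 0$. Applying Lemma \ref{local surjectivity} to the cyclic submodule $R\cdot\esodd_\gn \subseteq \Sel_{\sel(\gn)}$ yields $\gl \in \cl$ coprime to $\gn$ with $\loc_\gl(\esodd_\gn) \neq 0$, and the second reciprocity law then gives $\eseven_{\gn\gl} \neq 0$, since $R/(\eseven_{\gn\gl}) \iso H^1_\unr(K_\gl, T)/R\cdot\loc_\gl(\esodd_\gn)$ is a proper quotient of $H^1_\unr(K_\gl, T) \iso R$. Because $\gn\gl \in \cn^\even$, the already-proved even case applied at $\gn\gl$ gives $\len(M_{\gn\gl}) \leq k-1$, and Corollary \ref{length induction} in the odd case then yields $\len(M_\gn) = \len(M_{\gn\gl}) - b \leq k-1$, which is precisely $\gm^{k-1}M_\gn = 0$.

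The main delicate point is the even case: recognizing that the failure of the conclusion is exactly the statement that $\Sel_{\sel(\gn)}$ contains a free $R$-summand, and that this is what is needed to trigger the sharper half of Lemma \ref{local surjectivity} and so push the global duality relation $a+b = k$ to the extremal configuration $(a,b) = (k,0)$ that annihilates $\loc_\gl$ on the entire dual Selmer group. Once that configuration is reached the first reciprocity law does essentially all the remaining work, and the odd case is a short two-step deduction driven by reciprocity law (2).
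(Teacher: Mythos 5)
Your proof is correct. The even case is the same as the paper's: assume $\gm^{k-1}M_\gn\neq 0$, extract a free rank-one submodule $C\subset\Sel_{\sel(\gn)}$, pick $\gl$ via Lemma \ref{local surjectivity} so that $\loc_\gl$ maps $C$ onto $H^1_\unr(K_\gl,T)$, deduce from Proposition \ref{global duality} that $\loc_\gl$ kills $\Sel_{\sel(\gn\gl)}$, and read off $\eseven_\gn=0$ from the first reciprocity law. The odd case is where you diverge from the paper, and your route is cleaner. The paper argues by contrapositive: it assumes $\gm^{k-1}M_\gn\neq 0$, produces a free rank-\emph{two} submodule of $\Sel_{\sel(\gn)}$, observes that for \emph{every} $\gl$ the kernel of $\loc_\gl$ therefore still contains a free rank-one submodule, pushes this through to $\gm^{k-1}M_{\gn\gl}\neq 0$ for all $\gl$, invokes the even case to conclude $\eseven_{\gn\gl}=0$ for all $\gl$, and only then deduces $\esodd_\gn=0$ by the second reciprocity law plus Lemma \ref{local surjectivity}. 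Your argument proceeds in the forward direction: from $\esodd_\gn\neq 0$ you choose a \emph{single} $\gl$ with $\loc_\gl(\esodd_\gn)\neq 0$, get $\eseven_{\gn\gl}\neq 0$ from the second reciprocity law, apply the even case to obtain $\len(M_{\gn\gl})\le k-1$, and finish with Corollary \ref{length induction}, which gives $\len(M_\gn)=\len(M_{\gn\gl})-b\le k-1$. This sidesteps the rank-two submodule and kernel arguments entirely, replacing a universal quantifier over $\gl$ with one well-chosen $\gl$ plus the already-established numerical identity of Corollary \ref{length induction}. Both proofs use the same ingredients; yours is the shorter deduction once the even case and the length-induction formula are in hand.
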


\begin{proof}
First suppose $\gn\in\cn^\even$ and $\gm^{k-1}M_\gn\not=0$.  Then
$\Sel_{\sel(\gn)}$ contains a free rank one submodule, $C$.
Choose $\gl\in\cl$ not dividing $\gn$ such that 
$\loc_\gl$ takes $C$ isomorphically onto $H^1_\unr(K_\gl,T)$ 
(Lemma \ref{local surjectivity}).  By Proposition \ref{global duality},
$\loc_\gl(\Sel_{\sel(\gn\gl)})=0$.  Thus $\loc_\gl(\esodd_{\gn\gl})=0$
and the first reciprocity law implies that $\eseven_\gn=0$.

Now suppose $\gn\in\cn^\odd$ and $\gm^{k-1}M_\gn\not=0$.  Proposition
\ref{structure} implies that $\Sel_{\sel(\gn)}$ contains a free
submodule of rank two, $C$.  From this and Lemma 
\ref{local freeness} one may deduce that
for any $\gl\in\cl$ the kernel of 
$$
\loc_\gl:\Sel_{\sel(\gn)}\map{}H^1_\unr(K_\gl,T)
$$
contains a free submodule.  This kernel is exactly 
$\Sel_{\sel_\gl(\gn)}\subset \Sel_{\sel(\gn\gl)}$,
and so $\gm^{k-1}M_{\gn\gl}\not=0$.
By the case considered above $\eseven_{\gn\gl}=0$, 
and the second reciprocity law
implies that $\loc_\gl(\esodd_\gn)=0$.
Since this holds for all choices of $\gl$, $\esodd_\gn=0$ by
Lemma \ref{local surjectivity}.
\end{proof}

The above proposition shows that an Euler system
gives a (somewhat weak) annihilation result for Selmer groups.  To strengthen
this to an upper bound on Selmer groups, we must impose the 
hypothesis of \emph{freeness} defined below.  For an example of
how this hypothesis may be verified in practice, see the proof
of Lemma \ref{free es}.

\begin{Def}\label{free}
We will say that an Euler system of odd type is \emph{free}
if for every $\gn\in\cn^\odd$, there is a free rank-one 
$R$-submodule $C_\gn\subset \Sel_{\sel(\gn)}$ containing $\esodd_\gn$.
\end{Def}

\begin{Thm}\label{esb}
For any free Euler system of odd type for $(T,\sel,\cl)$,
$\eseven_\gn\in\Stub_\gn$ for every  $\gn\in\cn^\even$, and 
$\esodd_\gn\in\Stub_\gn$ for every $\gn\in\cn^\odd$.
Equivalently, the $R$-module $M_\gn$ of Proposition 
\ref{structure} satisfies
$$
\len(M_\gn)\le \left\{\begin{array}{ll}
\ind(\eseven_\gn, R) & \mathrm{if\ }\gn\in\cn^\even \\
\ind(\esodd_\gn, \Sel_{\sel(\gn)}(K,T)) & \mathrm{if\ }\gn\in\cn^\odd.
\end{array}\right.
$$
\end{Thm}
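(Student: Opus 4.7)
The plan is to prove both inclusions $\eseven_\gn\in\Stub_\gn$ and $\esodd_\gn\in\Stub_\gn$ simultaneously, by induction on $\rho(\gn)$. The base cases are $\rho(\gn)=0$, in which case $\gn\in\cn^\even$ and Nakayama's lemma forces $\Sel_{\sel(\gn)}(K,T)=0$ so $\Stub_\gn=R$; and $\rho(\gn)=1$, in which case $\gn\in\cn^\odd$ and a comparison of Lemma \ref{subquotients} with Proposition \ref{structure} forces $M_\gn=0$, so $\Stub_\gn=\Sel_{\sel(\gn)}(K,T)$. In both situations the conclusion is trivial.

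For the even inductive step, take $\gn\in\cn^\even$ with $\rho(\gn)\ge 2$, so that $\Sel_{\sel(\gn)}(K,T/\gm T)\ne 0$. Using Hypothesis \ref{useful primes} and Lemma \ref{subquotients}, I select $\gl\in\cl$ prime to $\gn$ on which $\loc_\gl$ is nonzero restricted to $\Sel_{\sel(\gn)}(K,T/\gm T)$; Corollary \ref{rho} then forces $\rho(\gn\gl)=\rho(\gn)-1$, and, letting $a,b$ be as in Proposition \ref{global duality}, one has $a\ge 1$. Because $H^1_\ord(K_\gl,T)\iso R$ by Lemma \ref{local freeness}, the unique submodule $\loc_\gl(\Sel_{\sel(\gn\gl)})$ of length $b$ coincides with $\gm^a H^1_\ord(K_\gl,T)$. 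The inductive hypothesis at $\gn\gl\in\cn^\odd$, combined with Corollary \ref{length induction}, then yields
$$
\loc_\gl(\esodd_{\gn\gl})\in\gm^{\len(M_{\gn\gl})}\loc_\gl(\Sel_{\sel(\gn\gl)})\subset\gm^{\len(M_{\gn\gl})+a}H^1_\ord(K_\gl,T)=\gm^{\len(M_\gn)}H^1_\ord(K_\gl,T).
$$
Comparing lengths in the first reciprocity law $R/(\eseven_\gn)\iso H^1_\ord(K_\gl,T)/R\cdot\loc_\gl(\esodd_{\gn\gl})$ then gives $\ind(\eseven_\gn,R)\ge\len(M_\gn)$, as needed.

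For the odd inductive step, take $\gn\in\cn^\odd$ with $\rho(\gn)\ge 3$; here the freeness hypothesis is used crucially. Lemma \ref{local surjectivity} produces $\gl\in\cl$ such that $\loc_\gl$ sends $C_\gn$ isomorphically onto $H^1_\unr(K_\gl,T)$. Consequently $a=\len(R)$, $b=0$, $\rho(\gn\gl)=\rho(\gn)-1$, and Corollary \ref{length induction} gives $\len(M_\gn)=\len(M_{\gn\gl})$. The inductive hypothesis at $\gn\gl\in\cn^\even$ supplies $\ind(\eseven_{\gn\gl},R)\ge\len(M_{\gn\gl})$, and the second reciprocity law translates this into $\ind(\loc_\gl(\esodd_\gn),H^1_\unr(K_\gl,T))\ge\len(M_\gn)$. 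Because $\loc_\gl$ is an isomorphism on $C_\gn$, this pulls back to $\ind(\esodd_\gn,C_\gn)\ge\len(M_\gn)$, and the inclusion $C_\gn\subset\Sel_{\sel(\gn)}(K,T)$ yields the desired $\ind(\esodd_\gn,\Sel_{\sel(\gn)}(K,T))\ge\len(M_\gn)$.

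The main obstacle is purely bookkeeping: coordinating the length accounting of Proposition \ref{global duality} and Corollary \ref{length induction} with the index identifications supplied by the two reciprocity laws, and arranging at each step for $\gl$ to behave as required (either to decrease $\rho$ via Hypothesis \ref{useful primes}, or to realize an isomorphism on $C_\gn$ via Lemma \ref{local surjectivity}). Notably the argument never invokes Kolyvagin-style derivative operators, which is what permits the criterion for equality in the main conjecture to be read off directly from the $p$-adic $L$-values $\eseven_\gn$.
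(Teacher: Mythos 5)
Your proof is correct and follows essentially the same route as the paper's: induction on $\rho(\gn)$, with the same base cases, the same choice of auxiliary prime $\gl$ (via Hypothesis \ref{useful primes} in the even step, via Lemma \ref{local surjectivity} and freeness in the odd step), and the same combination of Corollary \ref{length induction} with the reciprocity laws. The only cosmetic difference is that you phrase the even step in terms of identifying $\loc_\gl(\Sel_{\sel(\gn\gl)})$ with $\gm^a H^1_\ord(K_\gl,T)$ and tracking submodule inclusions, whereas the paper records the equivalent information as a chain of index and length inequalities.
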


\begin{proof}
The proof is by induction on 
$\rho(\gn)$, as defined in Corollary \ref{rho}.
If $\rho(\gn)=0$ then $M_n=0$, and so
$\gn\in\cn^\even$, $\Stub_\gn=R$, and the claim is vacuous.
Similarly, if $\rho(\gn)=1$ then $\gn\in\cn^\odd$, $M_n=0$, and the 
claim is vacuous.  We assume now that $\rho(\gn)\ge 2$, so that $M_n\not=0$.

Suppose $\gn\in\cn^\even$ and $\eseven_\gn\not=0$.
Fix any $\gl\in\cl$ prime to $\gn$ such that the Selmer group
$\Sel_{\sel(\gn)}(K,T/\gm T)$ is not killed by $\loc_\gl$.
By Corollary \ref{rho} and the induction hypothesis, 
$\esodd_{\gn\gl}\in\Stub_{\gn\gl}$, and so Corollary \ref{length induction}
gives
\begin{eqnarray*}
\len(M_n) & = & \len(M_{\gn\gl})+a\\
&\le &\ind(\esodd_{\gn\gl},\Sel_{\sel(\gn\gl)})+a\\
&\le &\ind\big(\loc_\gl(\esodd_{\gn\gl}),
\loc_\gl(\Sel_{\sel(\gn\gl)})\big)+a
\end{eqnarray*}
where $a$ is as in Proposition \ref{global duality}.  
The first reciprocity law implies
\begin{eqnarray*}
\ind(\eseven_\gn,R) &=& \ind(\loc_\gl(\esodd_{\gn\gl}),H^1_\ord(K_\gl,T))\\
&=& \ind(\loc_\gl(\esodd_{\gn\gl}),\loc_\gl(\Sel_{\sel(\gn\gl)})\big)
+\len(H^1_\ord(K_\gl,T)/\loc_\gl(\Sel_{\sel(\gn\gl)}))\\
&=& \ind(\loc_\gl(\esodd_{\gn\gl}),\loc_\gl(\Sel_{\sel(\gn\gl)})\big)
+\len(R)-b.
\end{eqnarray*}
Since $a+b=\len(R)$, we conclude $\len(M_n)\le \ind(\eseven_\gn,R)$.

Now suppose $\gn\in\cn^\odd$ and $\esodd_\gn\not=0$.  
Let $C_\gn$ be as in Definition \ref{free}
and fix $\gl\in\cl$ prime to $\gn$ such that $\loc_\gl$ takes $C_\gn$
isomorphically onto $H^1_\unr(K_\gl,T)$ (using Lemma \ref{local surjectivity}).
Again applying Corollary \ref{rho}, 
$\rho(\gn\gl)=\rho(\gn)-1$, and so $\eseven_{\gn\gl}\in\Stub_{\gn\gl}$.
By Corollary \ref{length induction} (with $a=\len(R)$ and $b=0$)
and the second reciprocity law,
\begin{eqnarray*}
\len(M_n) = \len(M_{\gn\gl}) &\le &\ind(\eseven_{\gn\gl},R)\\
&= &\ind\big(\loc_\gl(\esodd_{\gn}), H^1_\unr(K_\gl,T)\big)\\
&=&\ind\big(\esodd_{\gn},\Sel_{\sel(\gn)}).
\end{eqnarray*}
\end{proof}


\subsection{Sheaves on graphs}
\label{sheaf section}


Let $\xgraph$ be the graph whose vertices $v(\gn)$ are indexed by $\gn\in\cn$,
and an edge $e(\gn,\gn\gl)$ connects $v(\gn)$ to $v(\gn\gl)$ whenever
$\gl\in\cl$ and $\gn\gl\in\cn$.  
A vertex $v(\gn)$ will be called even or odd, depending 
on whether $\gn$ lies in $\cn^\even$ or $\cn^\odd$, and every edge 
connects an even vertex to an odd one (by Corollary \ref{rho}).

We define a sheaf $\mathrm{ES}(\xgraph)$ on $\xgraph$ in the sense of 
\cite[\S 3.1]{mazur-rubin} called the \emph{Euler system sheaf}
as follows.
To each vertex $v=v(\gn)$ we attach the 
$R$-module 
$$
\mathrm{ES}(v)=\left\{\begin{array}{ll}
\Sel_{\sel(\gn)} & \mathrm{if\ }\gn\in\cn^\odd \\
R & \mathrm{if\ }\gn\in\cn^\even\end{array}\right.
$$
and to each edge $e=e(\gn,\gn\gl)$
we attach the $R$-module
$$
\mathrm{ES}(e)=\left\{\begin{array}{ll}
H^1_\unr(K_\gl,T) &\mathrm{if\ }\gn\in\cn^\odd \\
H^1_\ord(K_\gl,T ) &\mathrm{if\ }\gn\in\cn^\even.
\end{array}\right.
$$
If $e=e(\gn,\gn\gl)$ is an edge with endpoint $v$, we define the 
\emph{vertex-to-edge map} 
$$\psi^e_v:\mathrm{ES}(v)\map{}\mathrm{ES}(e)$$ as follows.
If $v$ is odd then
$$
\psi^e_v=\loc_\gl:
\left\{\begin{array}{ll}
\Sel_{\sel(\gn)}\map{}H^1_\unr(K_\gl,T)
& \mathrm{if\ }v=v(\gn)\\
\Sel_{\sel(\gn\gl)}\map{}H^1_\ord(K_\gl,T)
& \mathrm{if\ }v=v(\gn\gl).
\end{array}\right.
$$
If $v$ is even then fix, using Lemma 
\ref{local freeness}, an isomorphism
\begin{equation}\label{edge choice}
\psi^e_v: R\iso \left\{\begin{array}{ll}
H^1_\unr(K_\gl,T) & \mathrm{if\ } v=v(\gn\gl)\\
H^1_\ord(K_\gl,T) & \mathrm{if\ } v=v(\gn).
\end{array}\right.
\end{equation}
Of course, the choice of isomorphism (\ref{edge choice}) is not unique,
but we fix a choice, for each edge $e$ with even vertex $v$, once and for all.

\begin{Def}\label{stub def}
The Euler system sheaf has a locally cyclic (in the sense
of \cite[Definition 3.4.2]{mazur-rubin})
subsheaf, the \emph{stub sheaf} $\Stub(\xgraph)$, defined as follows. 
To each vertex $v=v(\gn)$ we attach the 
cyclic $R$-module 
$$
\Stub(v)=\Stub_\gn\subset \mathrm{ES}(v),
$$ 
and to each edge $e=e(\gn,\gn\gl)$
we attach the cyclic module $\Stub(e)\subset \mathrm{ES}(e)$ 
$$
\Stub(e)=\left\{\begin{array}{ll}
\loc_\gl(\Stub_\gn) &\mathrm{if\ }\gn\in\cn^\odd \\
 \loc_\gl(\Stub_{\gn\gl}) &\mathrm{if\ }\gn\in\cn^\even.
\end{array}\right.
$$
If $e$ is an edge connecting the vertices $v$ and $v'$, with $v$ even and
$v'$ odd, then the vertex-to-edge map $\psi_{v'}^e$ 
restricts to a surjective map $\Stub(v')\map{}\Stub(e)$.
By Corollary \ref{stub shift},
the map $\psi_{v}^e$ restricts to an isomorphism
$\Stub(v)\iso \Stub(e)$.
\end{Def}

\begin{Def}
A \emph{core vertex} of $\xgraph$ is a vertex $v$ such that $\Stub(v)\iso R$.
\end{Def}

\begin{Rem}\label{core remark}
Set $\tbar=T/\gm T$. For $\gn\in\cn$, recall the integer
$$\rho(\gn)=\dim_{R/\gm}(\Sel_{\sel(\gn)}(K,\tbar))$$ of
Corollary \ref{rho}. 
It is clear from Lemma \ref{subquotients} and 
Proposition \ref{structure} that $v(\gn)$
is a core vertex if and only if $\rho(\gn)=0$ or $1$.  
\end{Rem}

The \emph{core subgraph} $\xgraph_0\subset\xgraph$ is the graph whose
vertices are the core vertices of $\xgraph$, with two vertices connected
by an edge in $\xgraph_0$ if and only if they are connected by an edge in
$\xgraph$.  We let  $\Stub(\xgraph_0)$ be the restriction of $\Stub(\xgraph)$
to $\xgraph_0$ in the obvious sense.  

\begin{Lem}\label{free sheaves}
The sheaf $\Stub(\xgraph_0)$ is 
locally free of rank one.  That is to say,
\begin{itemize}
\item for every vertex $v$ of $\xgraph_0$, $\Stub(v)$ is free of rank one,
\item for every edge $e$ of $\xgraph_0$, $\Stub(e)$ is free of rank one,
\item for every edge $e$ of $\xgraph_0$
with endpoint $v$, the vertex-to-edge map
$$\psi_v^e:\Stub(v)\map{}\Stub(e)$$ is an isomorphism.
\end{itemize}
\end{Lem}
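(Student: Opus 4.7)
The plan is to reduce everything to three ingredients: (i) the characterization of core vertices via $M_\gn = 0$, (ii) Corollary \ref{stub shift}, and (iii) the basic fact that over an Artinian local ring a surjection between two free rank-one modules is automatically an isomorphism.

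First I would verify the vertex claim. By Remark \ref{core remark} a core vertex $v(\gn)$ has $\rho(\gn)\in\{0,1\}$, and then Proposition \ref{structure} forces $M_\gn = 0$. Consulting Definition \ref{stub}, $\Stub(v)=\Stub_\gn$ equals $R$ when $\gn\in\cn^\even$ and equals $\Sel_{\sel(\gn)}(K,T)\iso R$ when $\gn\in\cn^\odd$; in either case $\Stub(v)$ is free of rank one.

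Next I would handle the edges. An edge $e = e(\gn,\gn\gl)$ of $\xgraph_0$ connects two core vertices, so $M_\gn = M_{\gn\gl} = 0$. Corollary \ref{stub shift} then shows that $\Stub(e)$ is a cyclic $R$-module of length $\len(R)$, hence abstractly free of rank one. Since $\Stub(e)$ is a submodule of whichever of $H^1_\unr(K_\gl,T)$ or $H^1_\ord(K_\gl,T)$ it sits in, and each of these is itself free of rank one by Lemma \ref{local freeness}, a length comparison forces $\Stub(e)$ to coincide with the entire local cohomology group.

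For the third bullet I would split on the parity of $v$. If $v$ is even, then the choice made in (\ref{edge choice}) makes $\psi_v^e$ an isomorphism from $R=\Stub(v)$ onto the whole local cohomology group, which by the previous paragraph equals $\Stub(e)$, so the restriction is an isomorphism. If $v$ is odd, then Definition \ref{stub def} tells us $\psi_v^e = \loc_\gl$ sends $\Stub(v)$ surjectively onto $\Stub(e)$; both sides are now known to be free of rank one over $R$, so this surjection is automatically an isomorphism.

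The only non-routine point is the assertion used in the edge step that $\Stub(e)$ actually exhausts $H^1_\unr(K_\gl,T)$ or $H^1_\ord(K_\gl,T)$ at a core edge rather than merely embedding as a proper cyclic submodule. This is what allows the otherwise arbitrary choice in (\ref{edge choice}) to deliver the required compatibility; once this is in hand the lemma is bookkeeping built on the structural results of \S \ref{ordinary selmer}.
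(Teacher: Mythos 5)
Your proposal is correct and follows essentially the same route as the paper. The only cosmetic difference is in the even case of the third bullet: you re-derive the required isomorphism by first observing that $\Stub(e)$ exhausts the entire rank-one local cohomology group at a core edge, whereas the paper simply invokes the general fact, already established in Definition \ref{stub def}, that $\psi_v^e$ restricts to an isomorphism $\Stub(v)\iso\Stub(e)$ for every even endpoint $v$ (core or not) — this holds because cyclic submodules of a free rank-one module over an Artinian local ring are uniquely determined by their length, so no length-maximality is needed. Either way the lemma follows, and your observation, while logically superfluous, is true and harmless.
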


\begin{proof}
The first property is the definition of $\xgraph_0$.
As noted in Definition \ref{stub def}, $\Stub(e)$ is isomorphic
to $\Stub(v)$, where $v$ is the even endpoint of $e$.
This proves the second property.
The final property follows from the first two, together with the surjectivity
of the vertex-to-edge maps in $\Stub(\xgraph)$.
\end{proof}

\begin{Def}
A \emph{global section}, $s$, of the sheaf $\Stub(\xgraph)$ on $\xgraph$ 
is a function on vertices and edges of $\xgraph$, 
$$
v\mapsto s(v)\in \Stub(v)
\hspace{1cm}
e\mapsto s(e)\in\Stub(e),
$$
such that for every edge $e$ with endpoints $v$ and $v'$
$$
\psi^e_v(s(v))=s(e)=\psi^e_{v'}(s(v'))
$$
in $\Stub(e)$. A global section of $\mathrm{ES}(\xgraph)$ 
is defined in the same  way.
\end{Def}

\begin{Def}
For two vertices $v$ and $v'$ of $\xgraph$, a \emph{path} from $v$ to $v'$
in $\xgraph$ is a finite  sequence of vertices $v=v_0, v_1,\ldots, v_k=v'$ 
such that $v_i$ is connected to $v_{i+1}$ by an edge $e_i$.  
A path is \emph{surjective} (for the locally cyclic sheaf $\Stub(\xgraph)$) 
if the vertex-to-edge map 
$$
\psi_{v_{i+1}}^{e_i}:\Stub(v_{i+1})\map{}\Stub(e_i)
$$ 
is an isomorphism for every $i$. We make the same definitions for $\xgraph_0$.
\end{Def}

\begin{Rem}\label{surjective remark}
Note that a surjective path from 
$v$ to $v'$ induces in an obvious way (\cite[\S 3.4]{mazur-rubin})
a surjective map $\Stub(v)\map{}\Stub(v')$, 
and that for any global section $s$ of $\Stub(\xgraph)$
this map takes $s(v)$ to $s(v')$.
\end{Rem}

\begin{Lem}\label{surjective paths}
A path $v_0, \ldots, v_k$ in $\xgraph$ is surjective if and only if
$$\len(\Stub(v_{i+1})) \le \len(\Stub(v_i))$$
for every $i$.
\end{Lem}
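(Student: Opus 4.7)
The plan is to exploit the asymmetry between the vertex-to-edge maps at even versus odd endpoints of an edge, an asymmetry already noted at the end of Definition \ref{stub def}: for an edge $e$ with endpoints $v$ (even) and $v'$ (odd), the map $\psi_v^e$ is an isomorphism onto $\Stub(e)$, while $\psi_{v'}^e$ is only guaranteed to be surjective. Since by Corollary \ref{rho} every edge of $\xgraph$ has exactly one even and one odd endpoint, each step $v_i\to v_{i+1}$ of the path falls into one of two cases depending on the parity of $v_{i+1}$, and I will check the equivalence edge-by-edge.

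First, consider the case where $v_{i+1}$ is even. Then $\psi_{v_{i+1}}^{e_i}$ is automatically an isomorphism, so the surjectivity condition is vacuous for this edge. At the same time, the opposite map $\psi_{v_i}^{e_i}$ from the odd vertex $v_i$ is surjective onto $\Stub(e_i)$, which forces $\len(\Stub(v_{i+1})) = \len(\Stub(e_i)) \le \len(\Stub(v_i))$. So the length inequality also holds automatically. Hence for such a step both sides of the claimed equivalence are free, and they agree trivially.

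Second, consider the case where $v_{i+1}$ is odd, so $v_i$ is even. Then $\psi_{v_i}^{e_i}$ is an isomorphism, giving $\len(\Stub(e_i)) = \len(\Stub(v_i))$. The map $\psi_{v_{i+1}}^{e_i}$ is a surjection of cyclic $R$-modules onto $\Stub(e_i)$, so in particular $\len(\Stub(v_{i+1})) \ge \len(\Stub(e_i)) = \len(\Stub(v_i))$. A surjection of cyclic modules over the principal Artinian ring $R$ is an isomorphism iff its domain and codomain have the same length, so $\psi_{v_{i+1}}^{e_i}$ is an isomorphism precisely when $\len(\Stub(v_{i+1})) = \len(\Stub(v_i))$; and in the presence of the automatic reverse inequality, that is the same as $\len(\Stub(v_{i+1})) \le \len(\Stub(v_i))$.

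Assembling the two cases over all $i$ yields the desired equivalence. I do not expect any real obstacle: once one unpacks Definition \ref{stub def}, the asymmetry between even and odd vertex-to-edge maps does all the work, and the only auxiliary fact needed is the elementary observation that a surjection of cyclic $R$-modules is an isomorphism iff the two lengths coincide.
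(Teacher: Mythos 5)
Your proof is correct and is essentially identical to the paper's: both arguments split into the two cases by the parity of $v_{i+1}$, use the observation from Definition \ref{stub def} that the even vertex-to-edge map is an isomorphism while the odd one is merely surjective, and then reduce the isomorphism condition at the odd vertex to a length comparison of cyclic $R$-modules.
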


\begin{proof}
Suppose we are given a path in $\xgraph$
from $v_0$ to $v_k$. 
If $v_i$ is odd and $v_{i+1}$ is even then the vertex-to-edge map
$\Stub(v_{i+1})\map{}\Stub(e_i)$ is an isomorphism, 
while $\Stub(v_i)\map{} \Stub(e_i)$
is surjective.  Thus the path $v_i, v_{i+1}$ is surjective and
$\len(\Stub(v_{i+1})) \le \len(\Stub(v_i))$.

If $v_i$ is even and $v_{i+1}$ is odd
then the vertex-to-edge map 
$\Stub(v_{i+1})\map{}\Stub(e_i)$ is surjective, while 
$\Stub(v_i)\map{} \Stub(e_i)$
is an isomorphism.  In particular 
$$\len(\Stub(v_{i+1})) \ge \len(\Stub(v_i)).$$
Thus $\psi_{v_{i+1}}^{e_i}$ is injective if and only
if it is an isomorphism. This is equivalent to 
$\Stub(v_{i+1})\iso \Stub(v_i)$, which is equivalent to
$$\len(\Stub(v_{i+1})) \le \len(\Stub(v_i)).$$
Since $v_0,\ldots,v_k$ is surjective if and only if $v_i,v_{i+1}$
is a surjective path for every $i$, the claim follows.
\end{proof}

\begin{Lem}\label{cores}
For any vertex $v$ of $\xgraph$ there is a core vertex $v_0$ and a 
surjective path in $\xgraph$ from $v_0$ to $v$.
For any $\gn\in\cn$ there is a $\gn'\in\cn$ with $\gn|\gn'$
such that $v(\gn')$ is a core vertex, and $\gn'$ may be chosen either in 
$\cn^\even$ or in $\cn^\odd$.
\end{Lem}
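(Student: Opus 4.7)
The plan is to construct a divisibility chain $\gn=\gn^{(0)}\mid\gn^{(1)}\mid\cdots\mid\gn^{(k)}$ with successive quotients in $\cl$ along which $\rho$ drops by one at each step, ending at a core vertex, and then to verify that the reversed chain is a surjective path by invoking Corollary \ref{length induction} and Lemma \ref{surjective paths}. If $\rho(\gn)\le 1$ the descent is empty and $v(\gn)$ is already core by Remark \ref{core remark}.

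For the descent step, whenever $\rho(\gn^{(j)})\ge 2$ the group $\Sel_{\sel(\gn^{(j)})}(K,T/\gm T)$ is nonzero. Fixing a nonzero class in it, Hypothesis \ref{useful primes} supplies infinitely many $\gl\in\cl$ on which its localization is nonzero; after excluding the finitely many primes dividing $\gn^{(j)}$, Corollary \ref{rho} gives $\rho(\gn^{(j)}\gl)=\rho(\gn^{(j)})-1$, and I set $\gn^{(j+1)}=\gn^{(j)}\gl$. The process terminates at $\rho(\gn^{(k)})\in\{0,1\}$, so $v(\gn^{(k)})$ is core. The relation $e(\gn)\equiv\rho(\gn)\pmod 2$ of Corollary \ref{rho} pairs the parity of $\gn^{(k)}$ in $\cn$ with the parity of $\rho(\gn^{(k)})$; to achieve the opposite parity I append one further prime, choosing $\gl$ nontrivial on the one-dimensional Selmer if $\rho(\gn^{(k)})=1$ (giving $\rho=0$), or any $\gl\in\cl$ prime to $\gn^{(k)}$ if $\rho(\gn^{(k)})=0$ (which automatically yields $\rho=1$, since $\loc_\gl$ kills the zero module). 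The new vertex remains core either way.

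The main obstacle is verifying the surjectivity of the reversed path $v_0,v_1,\ldots,v_k=v(\gn)$ (with an extra prepended vertex if a parity switch was performed). At a typical step from $v_i=v(\gn_b)$ to $v_{i+1}=v(\gn_a)$, with $\gn_b=\gn_a\gl$ chosen so that $\rho$ drops along the descent, one has $a=1$ and $b=0$ in the notation of Proposition \ref{global duality}. Corollary \ref{length induction} then yields $\len(M_{\gn_a})=\len(M_{\gn_b})+1$ if $\gn_a\in\cn^\even$ and $\len(M_{\gn_a})=\len(M_{\gn_b})$ if $\gn_a\in\cn^\odd$. Since $\Stub_\gn$ is a cyclic $R$-module of length $\len(R)-\len(M_\gn)$ regardless of parity (using that $\gm^{\len(M_\gn)}$ kills $M_\gn$), this translates to $\len(\Stub(v_{i+1}))\le\len(\Stub(v_i))$, and Lemma \ref{surjective paths} then yields surjectivity. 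For a parity-switch edge, both endpoints are core with $\Stub$ of length $\len(R)$, so the inequality is automatic. Matching the parity cases of Corollary \ref{length induction} against the non-increasing $\len(\Stub)$ condition is the only delicate point; Hypothesis \ref{useful primes} supplies the primes and Corollary \ref{rho} controls $\rho$, so the existence of the chain itself is routine.
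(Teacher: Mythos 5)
There is a genuine gap in the odd-vertex step. You claim that choosing $\gl$ so that $\rho$ drops forces ``$a=1$ and $b=0$ in the notation of Proposition~\ref{global duality}'', and then feed $a=1$, $b=0$ into Corollary~\ref{length induction}. But the $a$ and $b$ appearing in Proposition~\ref{global duality} and Corollary~\ref{length induction} are computed for the full ring $R$ and satisfy $a+b=\len(R)$, whereas the $\rho$-dropping criterion of Corollary~\ref{rho} is Proposition~\ref{global duality} applied to $T/\gm T$, where the analogous quantities satisfy $\bar a+\bar b=1$. Your chosen $\gl$ gives $\bar a=1$, $\bar b=0$; for the full $T$ this only yields $a\ge 1$ (equivalently $b\le\len(R)-1$), not $b=0$. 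It is perfectly possible to have $\loc_\gl$ kill $\Sel_{\sel(\gn_a\gl)}(K,T)[\gm]$ yet not kill $\Sel_{\sel(\gn_a\gl)}(K,T)$, so $b>0$ can occur even though $\bar b=0$. In the even case this does no harm (you need only $a\ge 0$ to get $\len(M_{\gn_a})\ge\len(M_{\gn_a\gl})$), but in the odd case Corollary~\ref{length induction} gives $\len(M_{\gn_a})=\len(M_{\gn_a\gl})-b$, and if $b>0$ then $\len(\Stub(v(\gn_a)))>\len(\Stub(v(\gn_a\gl)))$, which violates the non-increasing condition of Lemma~\ref{surjective paths} at exactly the step you need.

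The paper closes this gap by treating odd vertices differently: it invokes Lemma~\ref{local surjectivity}, picking $\gl$ so that $\loc_\gl$ carries a free rank-one submodule of $\Sel_{\sel(\gn_i)}$ \emph{isomorphically} onto $H^1_\unr(K_\gl,T)$. That forces $a=\len(R)$, hence $b=0$, and then $\len(\Stub)$ is preserved along the edge. (Such a $\gl$ still drops $\rho$, since the free rank-one submodule contributes a nonzero class in $\Sel[\gm]$ with nonzero localization.) At even non-core vertices your choice is essentially the paper's, and only $a>0$ is used. So your overall strategy is right, but you need the stronger Lemma~\ref{local surjectivity} choice at odd vertices; the $\rho$-based choice alone is not enough to control the length of $b$.
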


\begin{proof}
Set $w_0=v$, and construct a sequence of vertices $w_i$
inductively as follows.

If $w_i=w(\gn_i)$ is even and not a core vertex, 
choose $\gl\in\cl$ prime to $\gn_i$ such that 
$\loc_\gl(\Sel_{\sel(\gn_i)})\not=0$, and set $w_{i+1}=w(\gn_i\gl)$.  
In the notation of Corollary \ref{length induction}, $a>0$, and so
$$\len(\Stub(w_i))<\len(\Stub(w_{i+1})).$$  
If $w_i$ is already an even core vertex, then a similar argument 
shows that $$\len(\Stub(w_i))=\len(\Stub(w_{i+1}))$$ for any choice of $\gl$.

If $w_i=w(\gn_i)$ is odd 
choose (using Lemma \ref{local surjectivity})
$\gl\in\cl$ prime to $\gn_i$ such that localization at $\gl$
takes a free rank-one submodule of $\Sel_{\sel(\gn_i)}$ 
isomorphically onto $H^1_\unr(K_\gl,T)$, and set $w_{i+1}=w(\gn_i\gl)$.  
In the notation of Corollary \ref{length induction}, 
$a=\len(R)$ and $b=0$, and so
$$\len(\Stub(w_i))=\len(\Stub(w_{i+1})).$$

Eventually $\len(\Stub(w_k))=\len(R)$, and we have constructed a path from 
$v$ to a core vertex $v_0=w_k$. By construction
$$
\len(\Stub(w_i))\le\len(\Stub(w_{i+1}))
$$
for every $i$, and so Lemma \ref{surjective paths} implies
that the path $w_k, w_{k-1},\ldots, w_0$
is a surjective path from $v_0$ to $v$.
The final claim is clear from the construction above.
\end{proof}

For any $\ga\in\cn$, let $\xgraph_{0,\ga}$ be the subgraph
of $\xgraph_0$ whose vertices consist of those core vertices $v(\gn)$ with 
$\ga|\gn$.  Two vertices are connected by an edge in $\xgraph_{0,\ga}$
if and only if they are connected by an edge in $\xgraph_0$.

\begin{Lem}\label{pre-connected}
If $v(\ga)$ is a core vertex then the graph $\xgraph_{0,\ga}$
is path connected.
\end{Lem}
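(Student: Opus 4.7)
The plan is to proceed by induction on $q := |\gn/\ga|$, the number of prime factors of $\gn$ not dividing $\ga$, and show that every core vertex $v(\gn)\in\xgraph_{0,\ga}$ is path-connected in $\xgraph_{0,\ga}$ to $v(\ga)$. The base case $q=0$ is trivial. For the inductive step, I look for a core vertex $v(\gn')\in\xgraph_{0,\ga}$ with $|\gn'/\ga|<q$ reachable from $v(\gn)$ by a path in $\xgraph_{0,\ga}$ and then invoke the induction hypothesis.

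If $v(\gn)$ is even core (so $\rho(\gn)=0$), then Corollary \ref{rho} shows that $\rho(\gn/\gl)=1$ for any $\gl\mid\gn/\ga$, so $v(\gn/\gl)$ is odd core and lies in $\xgraph_{0,\ga}$, adjacent to $v(\gn)$ with $|{\cdot}/\ga|=q-1$, and we are done immediately.

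If $v(\gn)$ is odd core (so $\rho(\gn)=1$), let $c$ generate the one-dimensional space $\Sel_{\sel(\gn)}(K,T/\gm T)$. If $\loc_\gl(c)\neq 0$ for some $\gl\mid\gn/\ga$, then Corollary \ref{rho} yields $\rho(\gn/\gl)=0$ and $v(\gn/\gl)$ is an even core vertex of $\xgraph_{0,\ga}$ adjacent to $v(\gn)$, again concluding the step. Otherwise $\loc_\gl(c)=0$ for every $\gl\mid\gn/\ga$, and I take a detour: by Hypothesis \ref{useful primes} pick a prime $\gl^*\in\cl$ coprime to $\gn\ga$ with $\loc_{\gl^*}(c)\neq 0$, so Corollary \ref{rho} makes $v(\gn\gl^*)$ an even core vertex of $\xgraph_{0,\ga}$, linked to $v(\gn)$ by a single edge. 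For any $\gl\mid\gn/\ga$ the vertex $v(\gn\gl^*/\gl)$ then has $\rho=1$ (dropping a prime from an even core vertex always gives $\rho=1$), hence is odd core and reachable from $v(\gn)$ by a length-two path inside $\xgraph_{0,\ga}$.

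The main obstacle is this last detour: it merely swaps $\gl$ for $\gl^*$ and so leaves $q$ unchanged. The heart of the proof is to argue that $\gl^*$ (and $\gl$) can be chosen so that $v(\gn\gl^*/\gl)$ falls into the easy branch of Case B --- equivalently, so that the generator of $\Sel_{\sel(\gn\gl^*/\gl)}(K,T/\gm T)$ localizes nontrivially at some prime of $\gn/(\gl\ga)$ (the option of dropping $\gl^*$ back out is ruled out because $\rho(\gn/\gl)=2$ by hypothesis). I expect this to follow by combining Proposition \ref{global duality}, which relates the four Selmer groups associated to $\gn$, $\gn\gl^*$, $\gn\gl^*/\gl$, and $\gn/\gl$, with the infinitude of detecting primes from Hypothesis \ref{useful primes}, which gives enough freedom in choosing $\gl^*$ to avoid a persistent Subcase B2 obstruction across all the vertices of $\xgraph_{0,\ga}$ at level $q$. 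Once such a $\gl^*$ is produced, a single further edge drops $|{\cdot}/\ga|$ to $q-1$ and the induction closes.
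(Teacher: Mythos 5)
Your induction set-up, your treatment of the two easy branches, and even the shape of the path you hope to build in the hard case ($v(\gn) \to v(\gn\gl^*) \to v(\gn\gl^*/\gl) \to v(\gn\gl^*/(\gl\mathfrak{r}))$, exactly the paper's $v(\ga\gb) \to v(\ga\gb\gl) \to v(\ga\gb'\gl) \to v(\ga\gb'\gl/\mathfrak{r})$ after renaming) all match the paper. But the final paragraph is not a proof: you explicitly write ``I expect this to follow by\ldots,'' and you never establish the claim on which everything hinges, namely that the one-dimensional space $\Sel_{\sel(\gn\gl^*/\gl)}(K,\tbar)$ localizes nontrivially at some prime $\mathfrak{r}$ dividing $\gn/(\gl\ga)$. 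That is a genuine gap, not a routine detail to be filled in.

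Moreover, the mechanism you gesture at --- ``enough freedom in choosing $\gl^*$'' via Hypothesis \ref{useful primes} --- is not what closes the argument. The paper does not need to choose $\gl^*$ cleverly; \emph{any} prime with $\loc_{\gl^*}(\Sel_{\sel(\gn)}(K,\tbar)) \neq 0$ works. What actually drives the hard case is a dimension count that you have not exploited: because $\loc_\gl$ kills $\Sel_{\sel(\gn)}(K,\tbar)$ for every $\gl \mid \gn/\ga$, one has
$$
\Sel_{\sel(\gn)}(K,\tbar) = \Sel_{\sel_{\gn/\ga}(\ga)}(K,\tbar) \subset \Sel_{\sel(\ga)}(K,\tbar),
$$
and comparing dimensions ($1 = \rho(\gn)$ on the left, $\rho(\ga) \le 1$ on the right, since $v(\ga)$ is a core vertex) forces equality. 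Hence $\Sel_{\sel(\gn)}(K,\tbar) = \Sel_{\sel_{\gb'}(\ga)}(K,\tbar) = \Sel_{\sel(\ga)}(K,\tbar)$ for every $\gb' \mid \gn/\ga$. With this in hand, if the generator of $\Sel_{\sel(\gn\gl^*/\gl)}(K,\tbar)$ were killed by $\loc_{\mathfrak{r}}$ for \emph{all} $\mathfrak{r}$ dividing $(\gn/(\gl\ga))\gl^*$, you would get
$$
\Sel_{\sel(\gn\gl^*/\gl)}(K,\tbar) = \Sel_{\sel_{(\gn/(\gl\ga))\gl^*}(\ga)}(K,\tbar)
\subset \Sel_{\sel_{\gn/(\gl\ga)}(\ga)}(K,\tbar) = \Sel_{\sel(\gn)}(K,\tbar),
$$
an inclusion of one-dimensional spaces, hence an equality; but the left side is strict at $\gl^*$ while $\loc_{\gl^*}$ is nonzero on the right by choice of $\gl^*$ --- a contradiction. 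This produces the needed $\mathfrak{r}$ (which, as you correctly observed, cannot be $\gl^*$ since $\rho(\gn/\gl)=2$), and the induction closes. You had the right skeleton but were missing the identity $\Sel_{\sel(\gn)}(K,\tbar) = \Sel_{\sel(\ga)}(K,\tbar)$, which is the engine of the whole argument.
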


\begin{proof}
Fix $\gn=\ga\gb\in\cn$.  We show by induction on the number of prime
factors of $\gb$ that there is a path in $\xgraph_{0,\ga}$
from $v(\gn)$ to $v(\ga)$.  Assume $\gb>1$, otherwise there is nothing
to prove.  First suppose $v(\gn)$ is even, so that $\rho(\gn)=0$.
By Corollary \ref{rho}, $\rho(\gn/\gl)=1$ for any $\gl\in\cl$ dividing $\gb$.
Hence $v(\gn/\gl)$ is a vertex in $\xgraph_{0,\ga}$ connected to $v(\gn)$
by an edge, and by the induction hypothesis there is a path in 
$\xgraph_{0,\ga}$ from $v(\gn/\gl)$ to $v(\ga)$.
Similarly, if $v(\gn)$ is odd and
$\loc_\gl(\Sel_{\sel(\gn)}(K,\tbar))\not=0$
for some $\gl$ dividing $\gb$, then Proposition \ref{global duality}
implies $\rho(\gn/\gl)=\rho(\gn)-1=0$, and again by we are done by the
induction hypothesis.

We are left to treat the case where $\gn\in\cn^\even$ and 
$\loc_\gl(\Sel_{\sel(\gn)}(K,\tbar))$
is trivial for every $\gl$ dividing $\gb$.  Thus
$$
\Sel_{\sel(\gn)}(K,\tbar)=\Sel_{\sel_\gb(\ga)}(K,\tbar)
\subset \Sel_{\sel(\ga)}(K,\tbar).
$$
Since the $R/\gm$-vector space on the left has dimension $\rho(\gn)=1$
while the space on the right has dimension $\rho(\ga)\le 1$, we conclude
that the above inclusion is an equality.  In particular
$$
\Sel_{\sel(\gn)}(K,\tbar)
=\Sel_{\sel_{\gb'}(\ga)}(K,\tbar)
=\Sel_{\sel(\ga)}(K,\tbar)
$$
for any $\gb'|\gb$.  Take $\gb'=\gb/\gq$ for some prime $\gq$ dividing
$\gb$, and let $\gl\in\cl$ be any prime not dividing 
$\gn$ such that $\loc_\gl(\Sel_{\sel(\gn)}(K,\tbar))\not=0$.
By Corollary \ref{rho}, $\rho(\ga\gb')=2$, $\rho(\gn\gl)=0$,
and, since 
$$
\Sel_{\sel(\gn)}(K,\tbar)=\Sel_{\sel_{\gb}(\ga)}(K,\tbar)
\subset \Sel_{\sel(\ga\gb')}(K,\tbar)
$$
is not killed by $\loc_\gl$, $\rho(\ga\gb'\gl)=1$.
Thus $v(\ga\gb)$, $v(\ga\gb\gl)$, $v(\ga\gb'\gl)$ 
is a path in $\xgraph_{0,\ga}$. Finally, if 
$\loc_\mathfrak{r}(\Sel_{\sel(\ga\gb'\gl)}(K,\tbar))=0$
for every $\mathfrak{r}\in\cl$ dividing $\gb'\gl$,
then 
$$
\Sel_{\sel(\ga\gb'\gl)}(K,\tbar)=
\Sel_{\sel_{\gb'\gl}(\ga)}(K,\tbar)\subset
\Sel_{\sel_{\gb'}(\ga)}(K,\tbar)= \Sel_{\sel(\gn)}(K,\tbar).
$$
The Selmer groups on the left and right are both one dimensional
over $R/\gm$, so equality holds everywhere.  This contradicts 
$\loc_\gl(\Sel_{\sel(\gn)}(K,\tbar))\not=0$, and we conclude that
$\loc_\mathfrak{r}(\Sel_{\sel(\ga\gb'\gl)}(K,\tbar))\not=0$
for some $\mathfrak{r}\in\cl$ dividing $\gb'\gl$.  This 
returns us to the case of the paragraph above, and so
$v(\gn), v(\gn\gl),v(\ga\gb'\gl), 
v(\ga\gb'\gl/\mathfrak{r})$ is a path in $\xgraph_0$.
By the induction hypothesis, this may be continued to a path 
terminating at $v(\ga)$.
\end{proof}

\begin{Prop}\label{hub}
The core subgraph is path connected and contains both even and odd
vertices. For any 
vertex $v$ of $\xgraph$ and any core vertex $v_0$ of $\xgraph$,
there is a surjective path from $v_0$ to $v$.
\end{Prop}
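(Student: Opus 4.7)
The plan is as follows. The linchpin is the observation that any path lying entirely inside the core subgraph $\xgraph_0$ is automatically surjective for the ambient sheaf $\Stub(\xgraph)$: by Lemma \ref{free sheaves} every vertex-to-edge map of the restricted sheaf $\Stub(\xgraph_0)$ is an isomorphism, and these maps agree with the ones in $\Stub(\xgraph)$ on common edges and vertices. Once this is recognized, the claim that $\xgraph_0$ contains vertices of both parities is immediate from the final assertion of Lemma \ref{cores}, applied to any $\gn \in \cn$ with either choice of parity.

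For path-connectivity of $\xgraph_0$, I would fix two core vertices $v(\gn_1)$ and $v(\gn_2)$ and let $\gn_0 \in \cn$ denote the squarefree product of all primes dividing $\gn_1 \gn_2$. Lemma \ref{cores} produces some $\gn_3 \in \cn$ with $\gn_0 \mid \gn_3$ and $v(\gn_3)$ a core vertex; in particular both $\gn_1 \mid \gn_3$ and $\gn_2 \mid \gn_3$. Since $v(\gn_1)$ is a core vertex, Lemma \ref{pre-connected} yields a path inside $\xgraph_{0,\gn_1} \subset \xgraph_0$ connecting $v(\gn_3)$ to $v(\gn_1)$, and symmetrically a path inside $\xgraph_{0,\gn_2}$ connecting $v(\gn_3)$ to $v(\gn_2)$. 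Concatenating produces a path in $\xgraph_0$ from $v(\gn_1)$ to $v(\gn_2)$.

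For the last assertion, given any vertex $v$ of $\xgraph$ and any core vertex $v_0$, first invoke the primary half of Lemma \ref{cores} to obtain some core vertex $v_0'$ together with a surjective path in $\xgraph$ from $v_0'$ to $v$. By the path-connectedness of $\xgraph_0$ just established there is a path in $\xgraph_0$ joining $v_0$ to $v_0'$, and by the opening observation this path is surjective when viewed in $\xgraph$. Concatenating yields the required surjective path from $v_0$ to $v$.

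The real content of the statement is carried by Lemmas \ref{cores}, \ref{pre-connected}, and \ref{free sheaves}; the only step that requires any thought is the recognition that paths inside $\xgraph_0$ are automatically surjective with respect to $\Stub(\xgraph)$, which is what allows us to freely relocate the endpoint of the path supplied by Lemma \ref{cores} to any preferred core vertex without losing surjectivity.
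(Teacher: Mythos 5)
Your proof is correct and follows essentially the same route as the paper's: establish that paths in $\xgraph_0$ are automatically surjective (Lemma \ref{free sheaves}), use the second part of Lemma \ref{cores} to find a common core vertex above any two core vertices, connect via Lemma \ref{pre-connected}, and finish by concatenating with the surjective path furnished by the first part of Lemma \ref{cores}. The only cosmetic difference is that you explicitly take the squarefree product of the primes dividing $\gn_1\gn_2$ (i.e.\ their lcm) where the paper loosely writes ``divisible by $\ga\gb$,'' which is a slight improvement in precision but not a change of method.
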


\begin{proof}
The fact that $\xgraph_0$ contains both even and odd vertices
follows from the final statement of Lemma \ref{cores}.
Suppose we are given two core vertices $v(\ga)$ and $v(\gb)$.
By the second part of Lemma \ref{cores} we may choose 
$\gn\in\cn$ divisible by $\ga\gb$
such that $v(\gn)$ is a core vertex.  By Lemma \ref{pre-connected},
there is a path in $\xgraph_{0,\ga}$ from $v(\ga)$ to $v(\gn)$,
and a path in $\xgraph_{0,\gb}$ from $v(\gb)$ to $v(\gn)$.  Since
any path in $\xgraph_{0,\ga}$ is also a path in $\xgraph_0$, and
similarly for $\gb$, there is a path in $\xgraph_0$ from $v(\ga)$
to $v(\gb)$.  Since any path in $\xgraph_0$ is surjective, any two
core vertices may be connected by a surjective path.  
The final claim now follows from Lemma \ref{cores}.
\end{proof}

\begin{Cor}\label{section defect}
For any global section $s$ of $\Stub(\xgraph)$ there is a unique 
$\delta=\delta(s)$ with $0\le\delta\le\len(R)$ 
such that $s(v)$ generates $\gm^\delta \Stub(v)$
for every vertex $v$ of $\xgraph$.  
The section $s$ is determined by its value at any core vertex.
\end{Cor}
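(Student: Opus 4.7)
The plan is to transport information from a single core vertex to all of $\xgraph$ using the surjective paths guaranteed by Proposition~\ref{hub}. Fix any core vertex $v_0$; by Lemma~\ref{free sheaves}, $\Stub(v_0)$ is free of rank one over $R$, so there is a unique $\delta\in\{0,1,\ldots,\len(R)\}$ with $s(v_0)$ a generator of $\gm^{\delta}\Stub(v_0)$ (taking $\delta=\len(R)$ if $s(v_0)=0$). This $\delta$ will be the promised $\delta(s)$.

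For an arbitrary vertex $v$, Proposition~\ref{hub} supplies a surjective path from $v_0$ to $v$. By Remark~\ref{surjective remark}, this path induces a surjective $R$-module homomorphism $\Phi\colon \Stub(v_0)\to \Stub(v)$ with $\Phi(s(v_0))=s(v)$. Any surjection of cyclic $R$-modules sends a generator to a generator, hence carries $\gm^{\delta}\Stub(v_0)$ onto $\gm^{\delta}\Stub(v)$; applying this to a generator of $\gm^{\delta}\Stub(v_0)$ of the form $\pi^{\delta} g_0$ shows that $s(v)=\pi^{\delta}\Phi(g_0)$ generates $\gm^{\delta}\Stub(v)$. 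Uniqueness of $\delta$ follows because $\Stub(v_0)\cong R$ pins it down from $s(v_0)$ alone; replacing $v_0$ by another core vertex $v_0'$ yields the same value by applying the argument with $v=v_0'$.

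For the last assertion, suppose $s$ and $s'$ are two global sections with $s(v_0)=s'(v_0)$ at some core vertex $v_0$. Given any vertex $v$, fix a surjective path $v_0=w_0,w_1,\ldots,w_k=v$ with edges $e_i=e(w_i,w_{i+1})$. Surjectivity of the path means $\psi^{e_i}_{w_{i+1}}$ is an isomorphism for every $i$, and the section identity
$$\psi^{e_i}_{w_i}(s(w_i))=\psi^{e_i}_{w_{i+1}}(s(w_{i+1}))$$
then yields $s(w_{i+1})=(\psi^{e_i}_{w_{i+1}})^{-1}\psi^{e_i}_{w_i}(s(w_i))$, a formula that depends only on the sheaf. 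Induction on $i$ gives $s(w_i)=s'(w_i)$ for all $i$, so $s(v)=s'(v)$; values on edges are then forced by $s(e)=\psi^e_v(s(v))$, so $s=s'$.

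The only real obstacle in this argument is producing, from a single chosen core vertex, a surjective path terminating at every vertex of $\xgraph$. This is precisely what Proposition~\ref{hub} supplies, and without it the proof would need to juggle the compatibility of $\delta$ across disconnected components of the core subgraph or across non-surjective detours; with Proposition~\ref{hub} in hand the conclusion is essentially immediate.
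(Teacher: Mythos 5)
Your argument is correct and matches the paper's proof: both fix a core vertex $v_0$, define $\delta$ from $s(v_0)$ using that $\Stub(v_0)\cong R$, and then push $s(v_0)$ along the surjective paths of Proposition~\ref{hub} via Remark~\ref{surjective remark} to conclude that $s(v)$ generates $\gm^\delta\Stub(v)$ at every vertex and that $s$ is determined by $s(v_0)$. You have simply written out the steps that the paper compresses into ``the claim follows.''
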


\begin{proof}
Fix a core vertex $v_0$ and define $\delta$ to be such that $s(v_0)$
generates $\gm^\delta \Stub(v_0)$.  By Remark \ref{surjective remark}
and Proposition \ref{hub}, for any vertex $v$ in $\xgraph$ there is 
a surjective map $\Stub(v_0)\map{}\Stub(v)$ taking $s(v_0)$ to $s(v)$.
The claim follows.
\end{proof}


\subsection{The rigidity theorem}


\begin{Thm}\label{rigidity}
Assume Hypotheses \ref{cartesian} and \ref{useful primes}, and
suppose that we are given a nontrivial free Euler system of odd type
for $(T,\sel,\cl)$.  
There is a unique integer $\delta$, independent of $\gn\in\cn$, 
with the property that 
$\eseven_\gn$ generates $\gm^\delta \Stub_\gn$ for every $\gn\in\cn^\even$
and $\esodd_\gn$ generates $\gm^\delta\Stub_\gn$ for
every $\gn\in\cn^\odd$.  Furthermore, $\delta$ is given by
\begin{eqnarray*}
\delta  &=&  \min \{\ \ind(\eseven_\gn, R) \mid \gn\in\cn^\even \}  \\
&=&  \min \{\ \ind(\esodd_\gn, \Sel_{\sel(\gn)}) \mid \gn\in\cn^\odd \}. 
\end{eqnarray*}
\end{Thm}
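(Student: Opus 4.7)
The plan is to read the families $\{\eseven_\gn\}$ and $\{\esodd_\gn\}$ as a section of the stub sheaf $\Stub(\xgraph)$ and, using the connectivity from Proposition \ref{hub}, transport a uniform level of divisibility from a single core vertex to every vertex of $\xgraph$. Assign to each $\gn\in\cn$ the element $s(v(\gn))\in\Stub_\gn$ equal to $\eseven_\gn$ or $\esodd_\gn$ according to parity; these lie in $\Stub_\gn$ by Theorem \ref{esb}. The claim then becomes that there is a unique $\delta$ with $R\cdot s(v(\gn))=\gm^\delta\Stub_\gn$ for every $\gn$; this formulation automatically handles vanishing elements, which just correspond to $\delta\ge\len(\Stub_\gn)$.

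The key invariance step is: if $R\cdot s(v_i)=\gm^\delta\Stub_{v_i}$, then the same identity holds at $v_{i+1}$, for any edge $e_i=e(v_i,v_{i+1})$ in a surjective path. By Definition \ref{stub def} the vertex-to-edge map $\psi^{e_i}_v\colon\Stub(v)\to\Stub(e_i)$ is automatically an isomorphism at any even endpoint, while the surjective-path hypothesis makes it an isomorphism at $v_{i+1}$; the remaining map is in general only surjective. The appropriate reciprocity law (first or second, depending on parities) asserts equality of the two lengths $\ind(\eseven_\gn,R)$ and $\ind(\loc_\gl(\esodd_{\gn\gl}),H^1_\ord(K_\gl,T))$, forcing the cyclic submodules of the free rank-one module $H^1_\ord(K_\gl,T)\iso R$ generated by $\psi^{e_i}_{v_i}(s(v_i))$ and $\psi^{e_i}_{v_{i+1}}(s(v_{i+1}))$ to coincide inside $\Stub(e_i)$, since cyclic submodules of $R$ are determined by their length. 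Because a surjection of cyclic $R$-modules $A\twoheadrightarrow B$ sends $\gm^\delta A$ to $\gm^\delta B$ (which is $0$ exactly when $\delta\ge\len(B)$), the identity $R\cdot s(v_{i+1})=\gm^\delta\Stub_{v_{i+1}}$ follows.

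Nontriviality produces some $\gn_0$ with $s(v(\gn_0))\ne 0$; pick any core vertex $v_0$ and apply Proposition \ref{hub} to obtain a surjective path from $v_0$ to $v(\gn_0)$. Running the invariance argument along this path forces $s(v_0)\ne 0$ and determines a unique $\delta<\len(R)$ with $R\cdot s(v_0)=\gm^\delta R$. Applying Proposition \ref{hub} once more to connect $v_0$ to any other vertex propagates the identity $R\cdot s(v)=\gm^\delta\Stub_v$ uniformly across $\xgraph$. For the minimum formulas, at even $\gn$ the relation $\Stub_\gn=\gm^{\len(M_\gn)}R\subset R$ gives $\ind(\eseven_\gn,R)=\delta+\len(M_\gn)\ge\delta$, and at odd $\gn$, identifying $\Stub_\gn$ with the $R$-summand of $\Sel_{\sel(\gn)}\iso R\oplus M_\gn\oplus M_\gn$ from Proposition \ref{structure} gives $\ind(\esodd_\gn,\Sel_{\sel(\gn)}(K,T))=\delta+\len(M_\gn)\ge\delta$; both minima are achieved precisely when $\len(M_\gn)=0$, i.e.\ at core vertices, which exist of both parities by the last assertion of Proposition \ref{hub}. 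The hardest step is the odd-to-even case of invariance, where only a surjection is available at $v_i$; the reason the propagation still succeeds is that the uniform-$\delta$ formulation is robust to the element $s(v_{i+1})$ vanishing, and it vanishes exactly in the degenerate range $\delta\ge\len(\Stub_{v_{i+1}})$.
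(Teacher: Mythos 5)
Your proof is correct and follows essentially the same route as the paper's: view the families as a section of the stub sheaf via Theorem \ref{esb}, transport the index $\delta$ to every vertex via the surjective paths of Proposition \ref{hub}, and read off the minimum formulas at core vertices of each parity. The only cosmetic difference is that you re-derive the propagation step (Corollary \ref{section defect} in the paper) inline, phrasing it in terms of the cyclic submodules $R\cdot s(v)$ rather than the elements themselves, which lets you avoid the paper's step of adjusting the vertex-to-edge maps of (\ref{edge choice}) by units to make $s$ a literal global section.
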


\begin{proof}
For a vertex $v=v(\gn)$ of the graph $\xgraph$ 
of \S \ref{sheaf section}, we define $s(v)\in \mathrm{ES}(v)$ by
$$
s(v)=\left\{\begin{array}{ll} \eseven_\gn&\mathrm{if\ }\gn\in\cn^\even\\
\esodd_\gn&\mathrm{if\ }\gn\in\cn^\odd.
\end{array}\right.
$$
For an edge $e=e(\gn,\gn\gl)$ define $s(e)\in \mathrm{ES}(e)$ by
$$
s(e)=\left\{\begin{array}{ll} \loc_\gl(\esodd_\gn)
&\mathrm{if\ }\gn\in\cn^\odd\\
\loc_\gl(\esodd_{\gn\gl})&\mathrm{if\ }\gn\in\cn^\even.
\end{array}\right.
$$
The reciprocity laws of Definition \ref{es} now say exactly that,
modifying the vertex-to-edge maps (\ref{edge choice}) by an element
of $R^\times$ if needed,
the function $v\mapsto s(v)$ forms a global section of the Euler system sheaf
$\mathrm{ES}(\xgraph)$ with edge germ $e\mapsto s(e)$.
By Theorem \ref{esb}, this global section is actually a global
section of the subsheaf $\Stub(\xgraph)\subset\mathrm{ES}(\xgraph)$.
By Corollary \ref{section defect}, 
there is a unique $0\le\delta<\len(R)$
such that  $s(v)$ generates $\gm^\delta\cdot \Stub(v)$
for every vertex $v$.
For any vertex $v$ with $s(v)\not=0$ we have
$$
\delta=\ind( s(v), \Stub(v) )\le \ind( s(v), \mathrm{ES}(v) )
$$
with equality if and only if $v$ is a core vertex.  Since there are
even core vertices (by Proposition \ref{hub}),
$$
\delta=\min \{\ \ind(s(v),\mathrm{ES}(v)) \mid v \mathrm{\ even} \},
$$
and similarly with even replaced by odd.
\end{proof}


\subsection{A variant}
\label{variant}


In the applications to Iwasawa theory
we will need to work under slightly different
hypothesis on $T$.  In this subsection we assume that $K$ is 
a quadratic imaginary field.  Fix an embedding $K^\alg\hookrightarrow \C$
and let $\tau\in G_K$ be the associated complex conjugation.
Let $R$ and $T$ be as in the introduction to \S \ref{S:Euler Systems},
but instead of assuming that $T$ is self Cartier dual via 
an alternating pairing, assume, as in \S 1.3 of \cite{me}, 
that there is a perfect \emph{symmetric} pairing
$$
(\ ,\ ):T\times T\map{}R(1)
$$
which satisfies $(x^\sigma, y^{\tau\sigma\tau})=(x,y)^\sigma$ for 
any $\sigma\in G_K$.
Let $\Tw(T)$ be the $G_K$-module whose underlying $R$-module is $T$,
but with the $G_K$-action twisted by conjugation by $\tau$.
The above pairing can then be viewed as a perfect $G_K$-invariant
pairing 
$$
T\times\Tw(T)\map{}R(1).
$$
There is a canonical isomorphism 
$H^1(K,T)\iso H^1(K,\Tw(T))$
given on cocycles by $c(\sigma)\mapsto c^*(\sigma)=c(\tau\sigma\tau)$.
For any finite place $v$ of $K$, there is similarly a canonical isomorphism
from the local cohomology of $T$ at $v$ to the local cohomology of
$\Tw(T)$ at $\tau(v)$.  This isomorphism induces a local Tate pairing
\begin{equation}\label{twisted local duality}
H^1(K_v,T)\times H^1(K_{\tau(v)},T)\map{}R,
\end{equation}
and by direct calculation on cocycles one can check that if $v=\tau(v)$
then this pairing is symmetric.  Thus locally at a degree two prime
of $K$, the cohomology of $T$ behaves exactly as if $T$ were self-dual
via an alternating pairing.
We now define a Selmer structure $(\sel,\Sigma_\sel)$ exactly
as in \S \ref{selmer modules}, and say that a Selmer structure is
self-dual if the local conditions 
$H^1_\sel(K_v,T)$ and $H^1_\sel(K_{\tau(v)},T)$
are exact orthogonal complements under the pairing 
(\ref{twisted local duality}) for every $v\in\Sigma_\sel$.

All of the results of \S \ref{S:Euler Systems} hold verbatim under these
modified hypothesis (one need only verify that Lemma \ref{local freeness} and
Propositions \ref{structure} and \ref{global duality} hold, 
as these are the only
places where the self-duality hypotheses on $T$ and $\sel$ are 
directly invoked; for the latter two, 
the reader may consult Sections 1.4 and 1.5
of \cite{me}) with one minor caveat: the 
statement of Lemma \ref{local freeness} and the proof of  
Proposition \ref{global duality} require the self-duality of 
$H^1(K_\gl,T)$, and so we must add the hypothesis
that $\cl$ contains only degree two primes of $K$.

Finally, we remark that if the action of $G_K$ on $T$ extends to an
action of $G_\Q$ then the alternate hypotheses of this subsection are
equivalent to those of the introduction to \S \ref{S:Euler Systems},
since one may identify $T\iso\Tw(T)$ as 
$G_K$-modules via the map $x\mapsto x^\tau$.


\section{Iwasawa theory of elliptic curves}
\label{Iwasawa}


Let $K$ be a quadratic imaginary field of discriminant $d_K$ and
quadratic character $\epsilon$,
$p>3$ a rational prime, and $E/\Q$ an elliptic curve with conductor
$N$. Assume that $E$ has either multiplicative or good ordinary
reduction at $p$, and that $(d_K,pN)=1$.
Let $N^-$ be the largest divisor of $N$ which is prime to $p$ and satisfies
$\epsilon(q)=1$ for all primes $q\mid N^-$.
Factor $N=N^+ N^-$.
Let $\tau$ be a fixed choice of complex conjugation.

\begin{Hyp}\label{irreducible hyp}
Throughout \S \ref{Iwasawa} we assume:
\begin{enumerate}
\item $E[p]$ is absolutely irreducible as a $G_K=\Gal(K^\alg/K)$-module,
\item $N^-$ is squarefree.
\end{enumerate}
\end{Hyp}

We denote by $D_\infty$ the anticyclotomic 
$\Z_p$-extension of $K$, characterized by $\tau\sigma\tau=\sigma^{-1}$
for any $\sigma\in\Gamma=\Gal(D_\infty/K)$.
Let $D_m\subset D_\infty$ be the subfield with $[D_m:K]=p^m$,
and set $\Lambda=\Z_p[[\Gamma]]$.


\subsection{Selmer modules over $\Lambda$}


\begin{Def}
A degree two prime $\gl\nmid N$ of $K$ is \emph{$k$-admissible} if
$\N(\gl)\not\equiv 1\pmod{p}$, and if there is a decomposition
$$
E[p^k]\iso (\Z/p^k\Z)\oplus \mu_{p^k}
$$
of $\Gal(K^\unr_\gl/K_\gl)$-modules.
A $1$-admissible prime will simply be called \emph{admissible}.
The set of $k$-admissible primes is denoted $\cl_k$, and we let
$\cn_k$ denote the set of squarefree products of primes in $\cl_k$.
\end{Def}

Let  $q\mid N^-$ be a rational prime.  By Hypothesis \ref{irreducible hyp}(b)
$E$ has multiplicative reduction at $q$, and hence split multiplicative
reduction at the prime $\gq$ of $K$ above $q$.
The Tate parametrization shows that $T_p(E)$ has the form 
$\left(\begin{matrix}\epsilon_{\mathrm{cyc}}& * \\ 0& 
1 \end{matrix}\right)$
as a $G_{K_\gq}$-module, and 
we denote by $\Fil_q(T_p(E))\subset T_p(E)$ the $\Z_p$-line on which 
$G_{K_\gq}$ acts via $\epsilon_{\mathrm{cyc}}$.
For any extension $L/K_\gq$ the \emph{ordinary} submodule
$$
H^1_\ord(L,T_p(E))\subset H^1(L,T_p(E))
$$
is defined to be the image of 
$H^1(L,\Fil_q(T_p(E)))$,
and $H^1_\ord(L,E[p^k])$ is defined similarly.
For a $k$-admissible prime $\gl\in\cl_k$ we have a similar ordinary
local condition $H^1_\ord(L,E[p^k])$ for any extension $L/K_\gl$, as
in \S \ref{ordinary selmer}.
For the prime $p$, $T_p(E)$ has a distinguished line on which 
an inertia group at $p$ in $G_\Q$ acts via the cyclotomic character.  
Call this line $\Fil_p(T_p(E))$ and define the 
ordinary condition at $p$ to be the image of 
$$
H^1(L,\Fil_p(T_p(E)))\map{}H^1(L,T_p(E))
$$
for any finite extension $L/\Q_p$, and similarly for $E[p^k]$ and 
$E[p^\infty]$.

\begin{Lem}
For any $\gl\in\cl_k$ the module
$$
\mil_m H^1_\unr(D_{m,\gl}, E[p^k]) \define 
\mil_m \bigoplus_{w\mid\gl} H^1_\unr(D_{m,w}, E[p^k])
$$
is free of rank one over $\Lambda/p^k\Lambda$, and the same is true
with $\unr$ replaced by $\ord$.
\end{Lem}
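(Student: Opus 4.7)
The plan is to reduce the claim to Lemma \ref{local freeness} by first showing that the prime $\gl$ splits completely in $D_\infty/K$, so that each local completion is simply a copy of $K_\gl$ and the whole direct sum becomes an induced module.

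First I would verify the splitting claim. The anticyclotomic extension $D_\infty/K$ is unramified outside the primes above $p$, so at the inert prime $\gl \nmid p$ the extension $D_\infty/K$ is unramified, and we need only control $\Frob_\gl \in \Gamma$. Because $\gl$ is inert in $K/\Q$, complex conjugation fixes $\gl$, whence $\tau\,\Frob_\gl\,\tau^{-1}=\Frob_\gl$. But the defining property of the anticyclotomic extension is that $\tau\sigma\tau^{-1}=\sigma^{-1}$ for every $\sigma\in\Gamma$, so $\Frob_\gl=\Frob_\gl^{-1}$, i.e.\ $\Frob_\gl^2=1$. Since $\Gamma\iso\Z_p$ with $p$ odd, $\Frob_\gl=1$, and $\gl$ splits completely in $D_\infty/K$. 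Consequently for every $m$ the $p^m$ primes $w\mid\gl$ of $D_m$ are permuted simply transitively by $\Gal(D_m/K)$, and each local completion $D_{m,w}$ is canonically identified with $K_\gl$.

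Next I would identify the finite-level cohomology as an induced module. Fixing one prime $w_0\mid\gl$ of $D_m$, the orbit structure above gives
$$
\bigoplus_{w\mid\gl}H^1_\unr(D_{m,w},E[p^k])\;\iso\;
\mathrm{Ind}_1^{\Gal(D_m/K)}H^1_\unr(K_\gl,E[p^k])
$$
as $(\Z/p^k)[\Gal(D_m/K)]$-modules. Because $\gl\in\cl_k$, the admissibility hypothesis supplies a $\Gal(K_\gl^\unr/K_\gl)$-stable decomposition $E[p^k]\iso (\Z/p^k)\oplus\mu_{p^k}$; applying Lemma \ref{local freeness} with $R=\Z/p^k$ and $T=E[p^k]$ yields that $H^1_\unr(K_\gl,E[p^k])$ is free of rank one over $\Z/p^k$. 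The induced module is therefore free of rank one over $(\Z/p^k)[\Gal(D_m/K)]$, and precisely the same argument with $H^1_\ord$ in place of $H^1_\unr$ works verbatim, again invoking Lemma \ref{local freeness}.

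Finally I would pass to the inverse limit. The transition maps in $\{\bigoplus_{w\mid\gl}H^1_\unr(D_{m,w},E[p^k])\}_m$ are compatible with the identifications above (norm/corestriction on each fibre, which is the identity $K_\gl\to K_\gl$, combined with the projection $\Gal(D_{m+1}/K)\to\Gal(D_m/K)$ on the induction indices). Hence the inverse system is isomorphic, as $\Lambda/p^k\Lambda$-modules, to the inverse system of free rank-one modules over $(\Z/p^k)[\Gal(D_m/K)]$ with the standard projection maps, whose limit is $\Lambda/p^k\Lambda$ itself. The main (and really only) obstacle is the splitting statement at the very start; once that is in hand the rest is formal. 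One minor technical point to be careful about is verifying the transition maps correspond to the natural projections on the group ring side; this is routine since $\gl$ is unramified in $D_\infty/K$.
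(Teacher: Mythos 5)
Your proof is correct and takes essentially the same route as the paper: both hinge on the fact that a degree-two prime $\gl\nmid p$ splits completely in $D_\infty/K$, then invoke Shapiro's lemma (which you unwind as an induced-module computation at finite levels) together with Lemma \ref{local freeness}. The only difference is that you spell out the dihedral argument for the splitting and the compatibility of the corestriction maps with the group-ring projections, whereas the paper simply asserts the splitting and applies Shapiro's lemma directly to the inverse limit.
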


\begin{proof}
Since $\gl$ splits completely in $D_\infty$, Shapiro's lemma gives
an isomorphism
$$
\mil_m \bigoplus_{w\mid\gl} H^1(D_{m,w}, E[p^k])
\iso  H^1(K_\gl, E[p^k]\otimes\Lambda) \iso H^1(K_\gl, E[p^k])\otimes\Lambda.
$$
This, together with Lemma \ref{local freeness}, gives the claim.
\end{proof}

We define the Selmer groups 
$$
\mathcal{S}(D_\infty,T_p(E))  \subset  \mil H^1(D_m,T_p(E)) 
\hspace{1cm}
\Sel(D_\infty , E[p^\infty])  \subset  \dlim H^1(D_m,E[p^\infty])
$$
to be the classes which are ordinary at the primes dividing $pN^-$
and unramified at all other primes, and abbreviate
$$
\mathcal{S}=\mathcal{S}(D_\infty,T_p(E))
\hspace{1cm}
X=\Hom\big(\Sel(D_\infty , E[p^\infty]),\Q_p/\Z_p\big).
$$
For any $\gn\in\cn_k$, let  
$$
\mathcal{S}_\gn(D_\infty, E[p^k])\subset \mil_m H^1(D_m, E[p^k])
$$
be the $\Lambda$-submodule of classes which are ordinary at the 
primes dividing $\gn pN^-$, and unramified at all other primes.


\subsection{Euler systems over $\Lambda$}
\label{es lambda section}


\begin{Def}
Given $\gn\in\cn_1$, let $n$ be the positive integer satisfying $n\co_K=\gn$.
We say that $\gn$ is \emph{definite} if $\epsilon(nN^-)=-1$,
and is \emph{indefinite} if $\epsilon(nN^-)=1$. Let 
$\cn^\definite_k\subset \cn_k$ be the subset of definite products, and
define $\cn^\indefinite_k$ similarly.
\end{Def}

Suppose that for every $k>0$ we are given families
\begin{equation}\label{lambda es}
\{\esodd_\gn\in \mathcal{S}_\gn(D_\infty, E[p^k]) 
\mid \gn\in\cn_k^\indefinite \}
\hspace{1cm}
\{\eseven_\gn\in \Lambda/p^k\Lambda 
\mid \gn\in\cn_k^\definite \}
\end{equation}
which, as $k$ varies, are compatible with the 
inclusion $\cn_{k+1}\subset\cn_k$
and the natural maps $\Lambda/p^{k+1}\Lambda\map{}\Lambda/p^k\Lambda$
and $E[p^{k+1}]\map{p}E[p^k]$. 
Assume that these classes
satisfy the first and second reciprocity laws:
\begin{enumerate}
\item for any $\gn\gl\in \cn^\indefinite_k$ there is an isomorphism
of $\Lambda$-modules 
$$
\mil_m H^1_\ord(D_{m,\gl}, E[p^k])\iso \Lambda/p^k\Lambda
$$
taking $\loc_\gl(\esodd_{\gn\gl})$ to $\eseven_\gn$;
\item for any $\gn\gl\in \cn^\definite_k$ there is an isomorphism
of $\Lambda$-modules 
$$
\mil_m H^1_\unr(D_{m,\gl}, E[p^k])\iso \Lambda/p^k\Lambda
$$
taking $\loc_\gl(\esodd_{\gn})$ to $\eseven_{\gn\gl}$.
\end{enumerate}
Since the empty product lies in $\cn_k$
for every $k$, we obtain a distinguished element
\begin{equation}\label{senator}
\begin{array}{cll}
{\eseven^{\infty}}\in\Lambda &   \mathrm{if} & \epsilon(N^-)=-1\\
{\esodd^\infty}\in\mathcal{S} &  
\mathrm{if } & \epsilon(N^-)=1
\end{array}
\end{equation}
defined as the inverse limit of $\eseven_1$ or $\esodd_1$ as $k$ varies.

\begin{Lem}\label{torsion-free}
The $\Lambda$-module $\mathcal{S}$ 
is torsion free.
\end{Lem}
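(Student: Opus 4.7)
The plan is to realise $\mathcal{S}$ as a $\Lambda$-submodule of $\mathfrak{H} := \varprojlim_m H^1(D_m, T_p(E))$ and to prove torsion-freeness of $\mathfrak{H}$ directly. First, Hypothesis~\ref{irreducible hyp}(a) forces $H^0(D_\infty, E[p]) = 0$: a nonzero $G_{D_\infty}$-fixed subspace would be $G_K$-stable (as $G_{D_\infty}$ is normal in $G_K$) and hence equal to all of $E[p]$, forcing the mod-$p$ Galois representation to factor through the pro-$p$ abelian group $\Gamma$ and land in a unipotent subgroup of $\mathrm{GL}_2(\F_p)$, contradicting absolute irreducibility. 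It follows that $H^0(D_m, T_p(E)) = 0$ for every $m$, killing the relevant $\varprojlim^1$, so Shapiro's lemma identifies $\mathfrak{H}$ with $H^1(K, \mathcal{T})$, where $\mathcal{T} := T_p(E)\,\hat\otimes_{\Z_p}\,\Lambda^\iota$ is a free $\Lambda$-module on which the $G_K$-action is $\Lambda$-linear (the twist being absorbed into multiplication by $\bar{g}^{-1} \in \Lambda^\times$).

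For any nonzero $\lambda \in \Lambda$, the short exact sequence $0 \to \mathcal{T} \xrightarrow{\lambda} \mathcal{T} \to \mathcal{T}/\lambda\mathcal{T} \to 0$ produces a surjection $H^0(K, \mathcal{T}/\lambda\mathcal{T}) \twoheadrightarrow H^1(K, \mathcal{T})[\lambda]$, so it suffices to show $H^0(K, \mathcal{T}/\lambda\mathcal{T}) = 0$. The key observation is that, with $\mathfrak{m}$ the maximal ideal of $\Lambda$, the $G_K$-action on each graded piece $\mathfrak{m}^n/\mathfrak{m}^{n+1}$ is trivial (since $\bar{g}^{-1} - 1 \in \mathfrak{m}$ strictly raises the filtration), while $\mathcal{T}/\mathfrak{m}\mathcal{T} \cong E[p]$. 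Consequently each graded piece $\mathrm{gr}^n_\mathfrak{m}(\mathcal{T}/\lambda\mathcal{T})$ is isomorphic as a $G_K$-module to a finite direct sum of copies of $E[p]$, whose $H^0$ vanishes by absolute irreducibility.

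To conclude, $\mathcal{T}/\lambda\mathcal{T}$ is finitely generated over the Noetherian local ring $\Lambda/\lambda$, so Krull's intersection theorem gives $\bigcap_n \mathfrak{m}^n(\mathcal{T}/\lambda\mathcal{T}) = 0$; any putative nonzero $G_K$-invariant element would therefore have nonzero image in some graded piece, yielding a nonzero $G_K$-invariant element there and contradicting the previous paragraph. I anticipate that the only nontrivial bookkeeping will be the $\Lambda$-linearity of the $G_K$-action on $\mathcal{T}$ and the identification of the graded pieces, both of which follow from commutativity of $\Lambda$ together with the description of the twist. Everything else is routine Iwasawa-theoretic manipulation.
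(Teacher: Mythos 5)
Your proof is correct, but it takes a genuinely different route from the paper's. The paper's argument is a two-liner: since $D_\infty$ has primes of finite residue degree, $E(D_\infty)_{\mathrm{tors}}$ is finite, so $H^0(D_\infty, T_p(E)) = 0$, and then it cites Lemma 1.3.3 of Perrin-Riou for the torsion-freeness of $\mathcal{S}$. You instead derive $H^0(D_\infty, E[p]) = 0$ from Hypothesis~\ref{irreducible hyp}(a) via the observation that a $p$-group in $\mathrm{GL}_2(\F_p)$ is unipotent, and then you reprove the Perrin-Riou input from scratch: Shapiro's lemma identifies $\varprojlim_m H^1(D_m, T_p(E))$ with $H^1(K,\mathcal{T})$, the long exact sequence for multiplication by $\lambda$ reduces torsion-freeness to $H^0(K, \mathcal{T}/\lambda\mathcal{T}) = 0$, and a filtration argument (Krull intersection plus the fact that the $\mathfrak{m}$-adic graded pieces of $\mathcal{T}/\lambda\mathcal{T}$ are finite direct sums of copies of $E[p]$, hence have no $G_K$-invariants) closes the gap. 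Your version is self-contained and relies only on the irreducibility hypothesis that is already in force throughout \S \ref{Iwasawa}, whereas the paper's version is much shorter but appeals to an external reference and to a separate arithmetic observation about residue degrees in $D_\infty$ that is not otherwise used; both approaches hinge on producing the vanishing of $H^0$ over the anticyclotomic tower, but by different mechanisms.
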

 
\begin{proof}
As the torsion subgroup
of $E(D_\infty)$ is finite (since $D_\infty$ has primes of finite residue
degree), $H^0(D_\infty, T_p(E))=0$ and the claim follows from
\cite[Lemma 1.3.3]{pr95}.
\end{proof}

The following theorem will be proved in \S \ref{mc proof}.

\begin{Thm}\label{abstract mc}
Assume that the special element (\ref{senator}) is nonzero and
let $X_{\Lambda-\mathrm{tors}}$
denote the torsion submodule of $X$.
\begin{enumerate}
\item 
One has the rank formulas
$$
\mathrm{rank}_\Lambda\mathcal{S}=\mathrm{rank}_\Lambda X
=\left\{\begin{array}{ll} 0 & \mathrm{if\ }\epsilon(N^-)=-1\\
1 & \mathrm{if\ }\epsilon(N^-)=1.
\end{array}\right.
$$
\item
For any height one prime $\gp$ of $\Lambda$ one has
\begin{equation*}
\ord_\gp\big(\mathrm{char}(X_{\Lambda-\mathrm{tors}})\big) \le
2\cdot \left\{\begin{array}{ll} \ord_\gp(\eseven^\infty) 
& \mathrm{if\ }\epsilon(N^-)=-1\\
\ord_\gp\big(\mathrm{char}(\mathcal{S}/\Lambda\esodd^\infty)\big) 
& \mathrm{if\ }\epsilon(N^-)=1.
\end{array}\right.
\end{equation*}

\item 
Equality holds in (b) if the following condition is satisfied:
there exists a $k_0$ such that for all
$j\ge k_0$ the set 
$$
\{\eseven_\gn\in \Lambda/p^j\Lambda \mid \gn\in \cn_j^\definite \}
$$
contains an element with nontrivial image in $\Lambda/(\gp,p^{k_0})$.
\end{enumerate}
\end{Thm}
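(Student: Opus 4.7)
The plan is to deduce Theorem \ref{abstract mc} from the Artinian-level Theorems \ref{esb} and \ref{rigidity} by working one height one prime $\gp$ of $\Lambda$ at a time and passing to an inverse limit in the auxiliary parameter $k$.

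Fix a height one prime $\gp$. Since $\Lambda$ is a two-dimensional regular local UFD, the localization $\Lambda_\gp$ is a discrete valuation ring, so for each $k\ge 1$ the ring $R_k=\Lambda_\gp/\gp^k\Lambda_\gp$ is a principal Artinian local ring with residue field of characteristic $p>3$. Let $\mathbb{T}=T_p(E)\otimes_{\Z_p}\Lambda^\iota$, where $\Lambda^\iota$ carries the $G_K$-action through the inverse of the tautological character $G_K\twoheadrightarrow\Gamma\hookrightarrow\Lambda^\times$, and set $T=\mathbb{T}\otimes_\Lambda R_k$, a free rank-two $R_k$-module. The Weil pairing on $T_p(E)$ combined with the $\iota$-twist gives a perfect symmetric $G_K$-invariant pairing $T\times T\to R_k(1)$ of the type required in \S \ref{variant}. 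Shapiro's lemma, combined with a standard base-change argument, identifies $\Sel_{\sel(\gn)}(K,T)$ with $\mathcal{S}_\gn(D_\infty,E[p^k])\otimes_{\Lambda/p^k\Lambda}R_k$ (up to terms that vanish in the limit), where $\sel$ is cut out by the ordinary conditions at primes dividing $pN^-$ and the unramified conditions elsewhere; the reductions of \eqref{lambda es} mod $\gp^k$ then furnish a bipartite Euler system for $(T,\sel,\cl_k)$.

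Next I would verify the hypotheses of \S \ref{S:Euler Systems}. Absolute irreducibility of $T/\gm T$ comes from Hypothesis \ref{irreducible hyp}(a); cartesianness of $\sel$ is routine for the ordinary and unramified local conditions appearing here; Hypothesis \ref{useful primes} follows from Chebotarev applied to the residual representation $E[p]$. The reciprocity laws of Definition \ref{es} are direct reductions of those postulated for the $\Lambda$-adic families; nontriviality is the assumption on \eqref{senator}. The freeness condition of Definition \ref{free} is deduced from Lemma \ref{torsion-free}: $\mathcal{S}_\gp$ is a finitely generated torsion-free, hence free, $\Lambda_\gp$-module, and its image in $\Sel_{\sel(\gn)}(K,T)$ supplies a free rank-one $R_k$-submodule containing $\esodd_\gn$. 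With these checks in place, Theorem \ref{esb} applied at $\gn=1$ gives, in the definite case, $\len(M_1)\le \ind(\eseven_1,R_k)$; the decomposition $\Sel_{\sel(1)}(K,T)\iso R_k^{e(1)}\oplus M_1\oplus M_1$ of Proposition \ref{structure}, together with the Poitou-Tate duality argument of \cite[Theorem~1.4.2]{me} that identifies $M_1$ with the reduction of $X_{\Lambda-\mathrm{tors}}$ modulo $\gp^k$, upgrades this to $\len_{R_k}(X_{\Lambda-\mathrm{tors}}/\gp^k)\le 2\ind(\eseven_1,R_k)$. Letting $k\to\infty$ yields the bound in part (b), and the indefinite case is symmetric. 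Part (a) follows by applying (b) at a $\gp$ where the right hand side is finite, which forces the generic $\Lambda$-rank of $X$ (and, by self-duality, of $\mathcal{S}$) to be as predicted. For part (c), Theorem \ref{rigidity} applied at each level $j\ge k_0$ bounds the rigidity defect $\delta_j$ by $k_0$ uniformly in $j$ using the nonvanishing input, so the inequality of (b) becomes an equality after passing to the inverse limit.

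The principal technical obstacle is the dictionary between the $\Lambda$-adic and Artinian pictures: one must verify that Shapiro's lemma commutes with the formation of $\Sel_{\sel(\gn)}$ under the chosen local conditions, that the Poitou-Tate pairing on $\mathcal{S}$ restricts to the modified Cassels-Tate pairing used in Proposition \ref{structure} so as to yield precisely the factor of two in (b), and that the freeness condition of Definition \ref{free} really holds at every level — the last point being where $\Lambda$-torsion-freeness of $\mathcal{S}$ (Lemma \ref{torsion-free}) is used essentially.
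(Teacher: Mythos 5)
There is a genuine gap at the very first step, and it invalidates the rest as written. You propose to apply the Artinian theory of \S\ref{S:Euler Systems} with $R_k=\Lambda_\gp/\gp^k\Lambda_\gp$, asserting that its residue field has characteristic $p>3$. That is false: for $\gp\neq p\Lambda$ the element $p$ is invertible in $\Lambda_\gp$, so the residue field of $R_k$ is $\mathrm{Frac}(\Lambda/\gp)$, a finite extension of $\Q_p$, of characteristic zero. Essentially everything in \S\ref{S:Euler Systems} breaks in that setting: Tate local duality, the computation of $H^1(K_\gl,T)$ for admissible $\gl$ (which uses $\N(\gl)\not\equiv 1\pmod p$ and the pro-$p$ structure of $K_\gl^\times$), the finiteness of Selmer groups underlying Proposition~\ref{structure}, and even the requirement $p\neq 2,3$ in the definition of the Artinian rings. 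The paper avoids this by first forming $\co_\gp$, the integral closure of $\Lambda/\gp$ — a complete DVR with \emph{finite} residue field — and then applying the Artinian theory to $R_j=\co_\gp/p^j\co_\gp$, not to $\gp$-power quotients of the localization. Your framework does not fit the hypotheses of Section~2, so Theorems~\ref{esb} and~\ref{rigidity} simply do not apply to your $R_k$.

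There is a second, independent gap: the control theorem (the analogue of Proposition~\ref{control}) that links $\Lambda$-adic Selmer groups to the Selmer groups of the specialized module only has uniformly bounded error for $\gp$ outside a finite exceptional set $\Sigma_\Lambda$. Your plan — fix $\gp$ and pass to an inverse limit in $k$ — gives nothing at a bad $\gp$, yet part (b) is asserted for every height-one prime and part (c) may well concern such a $\gp$. To handle an arbitrary $\gp=(f)$ one must replace $\gp$ by the primes $\gp_m=(f+p^m)\Lambda$ for $m\gg 0$ (which avoid $\Sigma_\Lambda$ and satisfy $\Lambda/\gp_m\cong\Lambda/\gp$ by Hensel), prove length bounds there that are linear in $m$ up to $O(1)$, and then let $m\to\infty$; this is the Mazur--Rubin twisting argument that the paper imports from the proof of \cite[Theorem 5.3.10]{mazur-rubin}. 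Your outline does not include this step, and without it the estimates for $\gp\in\Sigma_\Lambda$ — which necessarily includes $p\Lambda$ and the support of $\mathrm{char}(X_{\Lambda\text{-tors}})$, precisely the primes one cares about — are out of reach. Relatedly, your treatment of part (c) asserts a uniform bound on the rigidity defect as $j$ and $k$ vary, but this bound must be produced across the family of primes $\gp_m$, which requires the explicit comparison of $\Lambda/(\gp_m,p^{k_0})$ with $\co_{\gp_m}/p^{k_1}\co_{\gp_m}$ via the finite cokernel of $\Lambda/\gp_m\hookrightarrow\co_{\gp_m}$ carried out in the paper; this is not automatic from Theorem~\ref{rigidity} alone.
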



\subsection{Reduction at a height one prime}


Set $V_p(E)=T_p(E)\otimes\Q_p$, so that we have the exact sequence
\begin{equation}\label{short exact}
0\map{}T_p(E)\map{}V_p(E)\map{}E[p^\infty]\map{}0.
\end{equation}
Fix $\gp\not=p\Lambda$ a height-one prime of $\Lambda$, 
and denote by $\co_\gp$ the integral closure of $\Lambda/\gp$, 
viewed as a Galois module with 
trivial action. The ring $\co_\gp$ is the ring of
integers of a finite extension $\Phi_\gp/\Q_p$, and we denote by
$\gm_\gp$ its maximal ideal.
By tensoring (\ref{short exact}) with $\co_\gp$ (viewed as a $G_K$-module
via the natural map $G_K\map{}\Lambda^\times$),
we obtain an exact sequence of $\co_\gp[[G_K]]$-modules
\begin{equation}\label{twisted short exact}
0\map{}T_\gp\map{}V_\gp\map{}W_\gp\map{}0.
\end{equation}

For any prime $\gq$ of $K$ and $A$ and one of $T_\gp$, $V_\gp$, or $W_\gp$,
 we define a submodule
$$
H^1_{\sel_\gp}(K_\gq,M)\subset H^1(K_\gq,M)
$$
as follows.  First suppose $M=V_\gp$. If $\gq\nmid pN^-$ then
$H^1_{\sel_\gp}(K_\gq,V_\gp)$ is the unramified cohomology classes.
If $\gq\mid pN^-$ then $H^1_{\sel_\gp}(K_\gq,V_\gp)$ is defined to be
the image of 
$$
H^1(K_\gq, \Fil_\gq(T_p(E))\otimes \Phi_\gp)\map{} H^1(K_\gq,V_\gp).
$$
If $M=T_\gp$ or $W_\gp$, then $H^1_{\sel_\gp}(K_\gq,M)$ is obtained 
from $H^1_{\sel_\gp}(K_\gq,V_\gp)$ by propagation, in the sense of
Remark \ref{propagation}.
These local submodules define global Selmer groups which we denote
by $\Sel_{\sel_\gp}(K,M)$.

\begin{Prop}\label{control}
Shapiro's lemma,  the natural map 
$T_p(E)\otimes\Lambda\map{}T_\gp$, and its dual induce maps
\begin{eqnarray*}
\mathcal{S}/\gp\mathcal{S} \to \Sel_{\sel_\gp}(K,T_\gp)
\hspace{1cm}
\Sel_{\sel_\gp}(K,W_\gp) \to \Sel(D_\infty, E[p^\infty])[\gp].
\end{eqnarray*}
The first map is injective.
There is a finite set of height one primes $\Sigma_\Lambda$ of $\Lambda$
such that if $\gp\not\in\Sigma_\Lambda$, then these maps have
finite kernel and cokernel which are bounded by  a constant depending
on $[\co_\gp:\Lambda/\gp]$ but not on $\gp$ itself.
\end{Prop}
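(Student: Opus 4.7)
The plan is to prove a control theorem in the standard Iwasawa-theoretic style, comparing $\Lambda$-coefficients to their reduction modulo the height-one prime $\gp$.

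First I would use Shapiro's lemma to identify $\mil_m H^1(D_m, T_p(E)) \iso H^1(K, T_p(E)\otimes_{\Z_p}\Lambda)$ with $G_K$ acting diagonally, and dually for $E[p^\infty]$. Under these identifications, $\mathcal{S}$ becomes a Selmer group $\Sel_{\mathcal{F}_\Lambda}(K, T_p(E)\otimes\Lambda)$ for the Selmer structure that propagates the ordinary (at $\gq\mid pN^-$) and unramified (elsewhere) local conditions. Since $\Lambda$ is a regular local ring of Krull dimension two, every height-one prime $\gp$ is principal; fix a generator $f$. Applying the snake lemma to
\[
0 \to T_p(E)\otimes\Lambda \xrightarrow{f} T_p(E)\otimes\Lambda \to T_p(E)\otimes(\Lambda/\gp) \to 0
\]
and composing with the inclusion $\Lambda/\gp\hookrightarrow\co_\gp$ produces a natural map $\mathcal{S}/\gp\mathcal{S}\to H^1(K, T_\gp)$. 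One then verifies that this map lands inside $\Sel_{\sel_\gp}(K,T_\gp)$; the second map in the proposition is constructed by the dual argument applied to $W_\gp = V_\gp/T_\gp$.

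Second, I would bound the kernel and cokernel term by term. Injectivity of the first map follows from the torsion-freeness of $\mathcal{S}$ (Lemma \ref{torsion-free}) together with vanishing of $H^0(K, T_p(E)\otimes\Lambda/\gp)$, which holds for $\gp\neq p\Lambda$ by absolute irreducibility of $E[p]$ (Hypothesis \ref{irreducible hyp}(a)) applied to the character-twisted residual representation. The cokernel is controlled by three pieces: (i) the finite group $H^2(K, T_p(E)\otimes\Lambda)[\gp]$, trivial for $\gp$ outside the support of a Selmer-dual $H^2$-torsion module; (ii) the change-of-rings discrepancy coming from $\co_\gp/(\Lambda/\gp)$, of length bounded by a function of $[\co_\gp:\Lambda/\gp]$ and vanishing for almost all $\gp$; and (iii) local discrepancies at places in $\Sigma_\sel$.

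Third comes the local comparison. At $\gq\nmid pN^-$, both local conditions are unramified cohomology, so the discrepancy reduces to $H^0$ of the inertia quotient on the residue module: finite, and vanishing for $\gp$ outside a finite set by absolute irreducibility. At $\gq\mid pN^-$, the ordinary condition is defined by the $\Z_p$-direct summand $\Fil_\gq(T_p(E))$, so its tensor with $\Lambda$ and subsequent reduction modulo $\gp$ agrees with the $\Fil_\gq$ subspace of $T_\gp$ up to a bounded $H^0$-error controlled by the Tate parameter. The exceptional set $\Sigma_\Lambda$ is then the union of the finitely many primes contributed by (i), (ii), (iii), and these local terms.

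The main obstacle is the uniform local analysis at $\gq\mid N^-$: one must show that the Tate-parametrization filtration on $T_p(E)$, after twisting by the $\gp$-adic character of $\Gamma$, still yields the correct ordinary subspace of $T_\gp$ for all but finitely many $\gp$. This reduces to a character-theoretic nontriviality check ensuring that the anticyclotomic twist does not accidentally coincide with the specific local $G_{K_\gq}$-characters arising from the Tate uniformization, and it is here that the bound on kernel and cokernel must be seen to depend only on $[\co_\gp:\Lambda/\gp]$ and not on $\gp$ itself.
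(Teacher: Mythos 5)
Your overall strategy is the same as the paper's: reduce to a Mazur--Rubin--style control argument (Prop.\ 5.3.14 of \cite{mazur-rubin}), built on a local comparison at each place, with the $[\co_\gp:\Lambda/\gp]$-bounded cokernel coming from the change-of-rings discrepancy and finitely many bad $H^0$-terms. That much is correct.

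However, you have misidentified the main technical point. You flag as ``the main obstacle'' the primes $\gq\mid N^-$, worrying that the anticyclotomic twist might interact badly with the Tate-parametrization filtration so that one needs a ``character-theoretic nontriviality check.'' In fact there is no twist at all at these primes: every $q\mid N^-$ is inert in $K$ (Hypothesis \ref{irreducible hyp}(b) and $(d_K,N)=1$ force a unique prime $\gq$ above $q$ and split multiplicative reduction there), and inert primes of $K$ split completely in the anticyclotomic $\Z_p$-extension. Indeed $\tau\gq=\gq$ gives $\Frob_\gq=\tau\Frob_\gq\tau^{-1}=\Frob_\gq^{-1}$ in $\Gamma$, so $\Frob_\gq^2=1$, hence $\Frob_\gq=1$ since $p$ is odd. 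Consequently $G_{K_\gq}$ acts trivially through $\Gamma$, the $G_{K_\gq}$-module $T_\gp$ is literally $T_p(E)\otimes\co_\gp$ with trivial action on the second factor, and the ordinary filtration matches $\Fil_\gq(T_p(E))\otimes\co_\gp$ on the nose; there is nothing to check that is specific to $\gp$. This is exactly why the paper can dispose of the $N^-$ local comparison quickly.

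The place where the local control theorem actually requires care is $\gq\mid p$, where the prime is ramified in $D_\infty$ and the ordinary filtration on $T_\gp$ does carry a nontrivial twist. Your proposal lumps $p$ and $N^-$ together under ``$\gq\mid pN^-$'' and then singles out $N^-$ as the hard case, which is backwards: the paper appeals to \cite[Lemma 2.2.7]{me} precisely for the $p$-adic local condition, and notes that the $N^-$ analysis is simpler because of complete splitting. You should reorganize the local step to separate (i) $\gq\nmid pN^-$ (unramified conditions, handled by \cite[Lemma 5.3.13]{mazur-rubin}), (ii) $\gq\mid p$ (the genuinely delicate ordinary condition with the anticyclotomic twist), and (iii) $\gq\mid N^-$ (ordinary condition but with trivial twist by complete splitting), and drop the ``character-theoretic nontriviality'' worry at (iii). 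One smaller point: the paper asserts injectivity of $\mathcal{S}/\gp\mathcal{S}\to\Sel_{\sel_\gp}(K,T_\gp)$ for \emph{every} height-one $\gp$, not just $\gp\neq p\Lambda$; your sketch only addresses the generic case.
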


\begin{proof}
The proof requires only minor modifications from that of 
\cite[Proposition 5.3.14]{mazur-rubin}.
One must first prove a local control theorem at
each prime $\gq$ of $K$.  If $\gq\nmid pN^-$ this local result is
\cite[Lemma 5.3.13]{mazur-rubin}. If $\gq\mid p$ the desired result is
\cite[Lemma 2.2.7]{me}. The case $\gq\mid N^-$ is similar to the latter,
but is greatly simplified by the fact that such 
$\gq$ split completely in $D_\infty$.
With the local control results in hand, the remainder of the proof follows
that of \cite[Proposition 5.3.14]{mazur-rubin} verbatim.
\end{proof}

\begin{Lem}\label{injective reduction}
Abbreviate $\mathcal{S}_\gp= \Sel_{\sel_\gp}(K,T_\gp)$.
The natural map
$$
\mathcal{S}_\gp/p^k\mathcal{S}_\gp\map{}\Sel_{\sel_\gp}(K,T_\gp/p^k T_\gp)
$$  
is injective, where the Selmer structure on $T_\gp/p^k T_\gp$ 
is obtained from the Selmer structure on $T_\gp$ by propagation 
(Remark \ref{propagation}).
\end{Lem}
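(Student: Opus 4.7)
The plan is to reduce the injectivity to two claims via a snake-lemma argument: (a) $H^0(K, T_\gp/p^k T_\gp) = 0$, and (b) the quotient $Q := H^1(K, T_\gp)/\mathcal{S}_\gp$ is $p$-torsion free. Claim (a) controls reduction of coefficients on the ambient $H^1$, while (b) promotes that control to injectivity on the Selmer subgroup.

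For (a), the residual module $T_\gp/\gm_\gp T_\gp$ is isomorphic as a $G_K$-module to $E[p] \otimes_{\F_p}(\co_\gp/\gm_\gp)$, where $\co_\gp/\gm_\gp$ carries the $G_K$-action through the character by which $\Gamma$ acts on $\co_\gp$; absolute irreducibility of $E[p]$ (Hypothesis \ref{irreducible hyp}(a)) is preserved by tensoring with the residue field, so $(T_\gp/\gm_\gp T_\gp)^{G_K}=0$. A Nakayama-style argument — any nonzero $G_K$-invariant of $T_\gp/p^k T_\gp$ generates a cyclic $\co_\gp$-submodule whose $\gm_\gp$-socle embeds $G_K$-equivariantly into $T_\gp/\gm_\gp T_\gp$ — propagates this to $H^0(K, T_\gp/p^k T_\gp)=0$ for every $k$. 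Feeding this into the long exact sequence of $0 \to T_\gp \xrightarrow{p^k} T_\gp \to T_\gp/p^k T_\gp \to 0$ gives both $\co_\gp$-torsion-freeness of $H^1(K, T_\gp)$ and an injection $H^1(K, T_\gp)/p^k H^1(K, T_\gp) \hookrightarrow H^1(K, T_\gp/p^k T_\gp)$. Applying the snake lemma to the multiplication-by-$p^k$ self-map of $0 \to \mathcal{S}_\gp \to H^1(K, T_\gp) \to Q \to 0$ then identifies the kernel of $\mathcal{S}_\gp/p^k \mathcal{S}_\gp \to H^1(K, T_\gp)/p^k H^1(K, T_\gp)$ with $Q[p^k]$, and since the propagated Selmer condition on $T_\gp/p^k T_\gp$ is by definition the image of $H^1_{\sel_\gp}(K_v, T_\gp)$ at each $v$, the natural map of the lemma factors through the composite with the above injection. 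Its kernel is therefore exactly $Q[p^k]$.

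For (b), $Q$ embeds into $\bigoplus_v H^1(K_v, T_\gp)/H^1_{\sel_\gp}(K_v, T_\gp)$ via the global-to-local restriction maps, so it suffices to verify $p$-torsion-freeness at each $v$. When $T_\gp$ is unramified at $v$ and the local condition is the unramified one, inflation-restriction identifies the quotient with a submodule of $\Hom(I_v, T_\gp)^{\Frob_v} \cong T_\gp(-1)^{\Frob_v}$, a submodule of the free $\co_\gp$-module $T_\gp(-1)$, hence torsion-free. For $v \mid pN^-$, where the local condition is the image of $H^1(K_v, \Fil_v T_\gp)$, the local long exact sequence of $0 \to \Fil_v T_\gp \to T_\gp \to T_\gp/\Fil_v T_\gp \to 0$ realizes the quotient as a submodule of $H^1(K_v, T_\gp/\Fil_v T_\gp)$. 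The main obstacle will be controlling $p$-torsion in the ordinary case at $v \mid p$: one must show that any $p$-torsion class of $H^1(K_v, T_\gp/\Fil_v T_\gp)$ that lifts to $H^1(K_v, T_\gp)$ already comes from $H^1_\ord(K_v, T_\gp)$. I expect this to follow from a direct inflation-restriction analysis of the rank-one unramified (good-ordinary case) or multiplicative $\co_\gp$-module $T_\gp/\Fil_v T_\gp$, combined with local Tate duality and the self-duality of the Selmer structure to translate the question into one about $H^1_{\sel_\gp}(K_v, W_\gp)$; an analogous argument handles the remaining primes in $\Sigma_{\sel_\gp}$ where $T_\gp$ is ramified but the local condition is still the unramified one.
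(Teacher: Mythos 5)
Your overall snake-lemma strategy is sound, and it is in fact the skeleton of the argument the paper delegates to: the paper's entire proof is a citation to \cite[Lemma 3.7.1]{mazur-rubin}, whose proof runs exactly along these lines (reduce to torsion-freeness of $H^1(K,T_\gp)/\mathcal{S}_\gp$, then use the fact that local quotients embed into vector spaces). Part (a) is harmless but unnecessary: the injection $H^1(K,T_\gp)/p^k H^1(K,T_\gp) \hookrightarrow H^1(K,T_\gp/p^kT_\gp)$ is automatic from the long exact sequence of $0\to T_\gp\xrightarrow{p^k} T_\gp \to T_\gp/p^kT_\gp\to 0$ regardless of $H^0$, and the snake lemma identifies the kernel of $\mathcal{S}_\gp/p^k\mathcal{S}_\gp\to H^1(K,T_\gp)/p^kH^1(K,T_\gp)$ with a \emph{quotient} of $Q[p^k]$, which dies as soon as $Q$ is torsion-free even if $H^1(K,T_\gp)$ itself has torsion.

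The genuine gap is in (b), and it stems from a misreading of the Selmer structure. You take the local condition at $v\mid pN^-$ to be the image of $H^1(K_v,\Fil_v T_\gp)\to H^1(K_v,T_\gp)$. But the paper defines $H^1_{\sel_\gp}(K_v,T_\gp)$ at \emph{every} place by propagation from $V_\gp$ (Remark \ref{propagation}): it is the full \emph{preimage} in $H^1(K_v,T_\gp)$ of the ordinary subspace of $H^1(K_v,V_\gp)$, which is generally strictly larger than the image of $H^1(K_v,\Fil_v T_\gp)$. This is the whole point of the construction. With the propagated definition, every local quotient $H^1(K_v,T_\gp)/H^1_{\sel_\gp}(K_v,T_\gp)$ injects by construction into $H^1(K_v,V_\gp)/H^1_{\sel_\gp}(K_v,V_\gp)$, a $\Phi_\gp$-vector space, and is therefore torsion-free — uniformly for all $v$, with no case analysis, no inflation-restriction, and no appeal to local duality. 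With your reading of the local condition at $v\mid p$, the local quotient can genuinely acquire $p$-torsion (the discrepancy between image and saturation), so the step you flag as incomplete ("I expect this to follow from a direct inflation-restriction analysis $\ldots$") is not merely unfinished but may not close at all. Replacing your case-by-case treatment of (b) with the one-line observation that $\sel_\gp$ is propagated from $V_\gp$ completes the proof and reproduces Mazur--Rubin's Lemma 3.7.1.
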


\begin{proof}
This is Lemma 3.7.1 of \cite{mazur-rubin}.
\end{proof}

For any pair of positive integers $k\le j$, set
$$
\delta_\gp(k,j)=
\min\{ \ind(\eseven_\gn, \co_\gp/p^k\co_\gp) \mid \gn\in 
\cn_j^\definite\}\le \infty.
$$ 
As $\delta_\gp(k,j)\le\delta_\gp(k,j+1)$, we may define 
$\delta_\gp(k)=\lim_{j\to\infty}\delta_\gp(k,j)$.

\begin{Prop}\label{dvr bound}
If $\epsilon(N^-)=-1$ and $\eseven^\infty\in\Lambda$ has
nontrivial image in $\co_\gp/p^k\co_\gp$ then 
$$
\len_{\co_\gp}\big(\Sel_{\sel_\gp}(K,W_\gp)\big)
+2\delta_\gp(k)= 2\cdot \len_{\co_\gp}(\co_\gp/\co_\gp\eseven^\infty).
$$
If $\epsilon(N^-)=1$ and $\esodd^\infty$ has nontrivial image in
$\mathcal{S}_\gp/p^k\mathcal{S}_\gp$ then
\begin{enumerate}
\item $\mathcal{S}_\gp$ is free of rank one over $\co_\gp$,
\item $\Sel_{\sel_\gp}(K,W_\gp)$ has $\co_\gp$-corank one, and
\item 
$
\len_{\co_\gp}\big(\Sel_{\sel_\gp}(K,W_\gp)_{/\mathrm{div}}\big)
+2\delta_\gp(k)=
2\cdot\len_{\co_\gp}\big(\mathcal{S}_\gp/
\co_\gp\esodd^\infty\big)
$
(the subscript $/\mathrm{div}$ indicates the quotient by the
maximal $\co_\gp$-divisible submodule).
\end{enumerate}
\end{Prop}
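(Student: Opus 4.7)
The plan is, for each $k$, to reduce modulo $(\gp,p^k)$ to the Artinian bipartite Euler system setting of \S\ref{S:Euler Systems}, apply the rigidity theorem at the empty product, and transport the resulting length identity back to $\Sel_{\sel_\gp}(K,W_\gp)$ via the control Proposition~\ref{control}. Setting $R=\co_\gp/p^k\co_\gp$, $T=T_\gp/p^k T_\gp$, and $\cl$ the set of $k$-admissible primes (all of degree two), the pairing on $T_p(E)$ induces a twisted self-duality on $T_\gp$ in the sense of~\S\ref{variant}. One checks: absolute irreducibility of $T/\gm T\iso E[p]$ by Hypothesis~\ref{irreducible hyp}(i); Hypothesis~\ref{useful primes} by Chebotarev; and the cartesian property of the propagated $\sel_\gp$ (standard for ordinary local conditions at $p$ and at $\gq\mid N^-$, using that primes above $N^-$ split completely in $D_\infty$). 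The families~(\ref{lambda es}) reduced modulo $(\gp,p^k)$ then form a bipartite Euler system in the sense of Definition~\ref{es}; by Proposition~\ref{no even es} the type is odd, matching indefinite vertices with odd and definite vertices with even.

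Theorem~\ref{rigidity} produces a common index $\delta_k$. A compatibility argument between the level-$k$ and level-$j$ Euler systems (for $j\ge k$) shows that the minimum defining $\delta_\gp(k,j)$ is attained at a core vertex of $\cn_j^\definite$ (which exists by Lemma~\ref{cores}) and equals $\delta_k$, hence $\delta_\gp(k)=\delta_k$. In the definite case $\gn=1\in\cn^\even$, so Proposition~\ref{structure} gives $\Sel_{\sel_\gp}(K,T)\iso M_1\oplus M_1$, and Theorem~\ref{rigidity} at $\gn=1$ reads $\ind(\eseven_1,R)=\delta_k+\len(M_1)$, whence
\[
\len\bigl(\Sel_{\sel_\gp}(K,T)\bigr) \;=\; 2\bigl(\ind(\eseven_1,R)-\delta_k\bigr).
\]
For $k$ sufficiently large the nonvanishing hypothesis yields $\ind(\eseven_1,R)=\len_{\co_\gp}(\co_\gp/\co_\gp\eseven^\infty)$. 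In the indefinite case $\gn=1\in\cn^\odd$, so $\Sel_{\sel_\gp}(K,T)\iso R\oplus M_1\oplus M_1$ contains a free rank-one summand; passage to the limit in $k$ via Lemma~\ref{injective reduction} and Proposition~\ref{control} forces $\mathcal{S}_\gp$ to be of $\co_\gp$-rank one and $\Sel_{\sel_\gp}(K,W_\gp)$ of $\co_\gp$-corank one, proving (i) and (ii). An analogous rigidity formula for $\esodd_1$ feeds into (iii).

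To finish, one transports these Artinian computations to the Iwasawa-theoretic Selmer group. The exact sequence $0\to T_\gp\stackrel{p^k}{\to}T_\gp\to T_\gp/p^k T_\gp\to 0$, intersected with the local Selmer conditions, yields a four-term sequence
\[
0 \to \mathcal{S}_\gp/p^k\mathcal{S}_\gp \to \Sel_{\sel_\gp}(K, T_\gp/p^k T_\gp) \to \Sel_{\sel_\gp}(K, W_\gp)[p^k] \to 0
\]
valid up to bounded error. The nonvanishing of $\eseven^\infty$ together with Theorem~\ref{esb} gives a uniform bound on the Artinian Selmer length; in the definite case this forces $\mathcal{S}_\gp$ to vanish up to a bounded finite summand (for $\gp$ outside a finite set), so that for $k$ large $\len(\Sel_{\sel_\gp}(K,T_\gp/p^k T_\gp))=\len(\Sel_{\sel_\gp}(K,W_\gp))$; combined with the preceding rigidity identity this gives (a). In the indefinite case the same exact sequence, together with the separation of the maximal divisible submodule of $\Sel_{\sel_\gp}(K,W_\gp)$ from its $(/\mathrm{div})$-quotient, yields (iii). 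The main technical obstacle is precisely this transport step: bounding the errors in the four-term sequence uniformly in $k$ and extracting the combinatorial factor of $2$, which reflects the self-duality of $\sel_\gp$ and the way it splits the Artinian Selmer into a pair of $M_1$-summands paired against each other by the modified Cassels-Tate form.
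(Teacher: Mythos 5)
Your overall framing---reduce modulo $p^k$, verify the hypotheses of \S\ref{S:Euler Systems}, identify the odd type via Proposition~\ref{no even es}, and apply the rigidity theorem at the empty product---follows the paper's route, and the hypothesis checks you list are the correct ones. But there are two genuine gaps.

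First, you never establish \emph{freeness} of the Euler system, which is a required hypothesis for both Theorem~\ref{esb} and Theorem~\ref{rigidity}. Working with $(T_k,\sel,\cl_k)$ as you propose, there is no reason the resulting families should be free in the sense of Definition~\ref{free}. The paper's device (Lemma~\ref{free es}) is to keep the coefficient ring $R_k=\co_\gp/p^k\co_\gp$ but enlarge the depth of the index set to $\cl_j$ with $j\ge 2k$: one decomposes $\Sel_{\sel(\gn)}(K,T_j)\iso R_j\oplus N\oplus N$ and uses Lemma~\ref{subquotients} to show that the image of $N$ in $\Sel_{\sel(\gn)}(K,T_k)$ dies once $j\ge 2k$ (together with the observation that $\gm^{ek-1}N=0$ when the class is nonzero, via Proposition~\ref{annihilation}), so that $\overline{\esodd}_\gn$ lands in a free rank-one summand. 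Without this, your appeal to Theorem~\ref{rigidity} is not justified. It also affects the definition of your $\delta_k$: since the paper runs the rigidity theorem over $\cn_j^\even=\cn_j^\definite$, the common index \emph{is} $\delta_\gp(k,j)$ by definition, and no separate ``compatibility argument'' between levels $k$ and $j$ is needed; the statement then follows by $j\to\infty$.

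Second, the final ``transport'' step is a detour in the wrong direction. Proposition~\ref{dvr bound} is stated entirely at the level of $T_\gp$ and $W_\gp$ and asserts an \emph{exact} identity; Proposition~\ref{control} (which relates $\Lambda$-adic to $\co_\gp$-adic Selmer modules up to bounded error) plays no role in its proof and belongs instead to the later proof of Theorem~\ref{abstract mc}. The paper's mechanism is a direct isomorphism
$\Sel_{\sel}(K,T_j)\iso\Sel_{\sel_\gp}(K,W_\gp)[p^j]$,
obtained from a choice of uniformizer and the cartesian property (equation (\ref{important identification}) via Lemma~\ref{subquotients}). Because the rigidity theorem gives $\len(M)<k$, this identification shows $M\oplus M\iso\Sel_{\sel_\gp}(K,W_\gp)$ (resp.\ $\Sel_{\sel_\gp}(K,W_\gp)_{/\mathrm{div}}$ in the indefinite case) exactly, with no error terms to control. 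The four-term sequence with bounded error you introduce is not needed and cannot yield the exact equalities claimed in the statement; it mixes this proposition with the subsequent passage from height-one primes to the $\Lambda$-module statement.
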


\begin{proof}
Let $k$ be as in the statement of the proposition.
For any $j\ge k$, abbreviate
$$
T_j=T_{\gp}/p^j T_\gp \hspace{1cm}
R_j=\co_{\gp}/p^j\co_\gp.
$$
Let $\sel$ denote the Selmer structure on $T_j$
obtained by propagation (Remark \ref{propagation}) of $\sel_\gp$
from $T_\gp$, and use the same notation for the Selmer structure
on $W_\gp[p^j]$ propagated from $\sel_\gp$ on $W_\gp$.
By applying the reduction maps $\Lambda/p^j \map{} R_j$ and
$$
\mil_m H^1(D_m, E[p^j])\iso H^1(K, E[p^j]\otimes\Lambda)
\map{}H^1(K,T_j)
$$
to the Euler system (\ref{lambda es}), we obtain families
\begin{equation}\label{reduced euler system}
\{\overline{\esodd}_{\gn} \in 
\Sel_{\sel(\gn)}(K,T_j)\mid \gn\in \cn_j^\indefinite\}
\hspace{.7cm}
\{\overline{\eseven}_{\gn} \in R_j \mid \gn\in \cn_j^\definite\}
\end{equation}
By assumption (and Lemma \ref{injective reduction}),
$\overline{\esodd}_{1}$ or $\overline{\eseven}_{1}$ 
(depending on whether 
$\epsilon(N^-)=1$ or $-1$) is nontrivial, where $1\in\cn_j$ is the
empty product.

A choice of uniformizer of $\co_\gp$ determines an isomorphism 
$T_j\iso W_\gp[p^j]$, and under such an isomorphism the
Selmer structures $\sel$ are identified (as in the proof of Lemma 1.3.8(i)
of \cite{rubin}).  In particular
\begin{equation}\label{important identification}
\Sel_{\sel}(K,T_j)\iso\Sel_{\sel}(K,W_\gp[p^j])
\iso \Sel_{\sel_\gp}(K,W_\gp)[p^j],
\end{equation}
where the second isomorphism follows from Lemma \ref{subquotients}
and the following

\begin{Lem}\label{first hypotheses}
The triple $(T_j,\sel,\cl_{j})$ satisfies
Hypotheses  \ref{cartesian} and \ref{useful primes}, as well as 
the hypotheses of \S \ref{variant}.  
More precisely the following hold.
\begin{enumerate}
\item $T_j$ is residually an absolutely irreducible $G_K$-module.
\item For any $\gl\in\cl_j$, the Frobenius at $\gl$ acts on 
$T_j$ with eigenvalues $\N(\gl)$ and $1$.  
\item There is a perfect $R_j$-bilinear symmetric pairing
$T_j\times T_j\map{}R_j(1)$
satisfying $(s^\sigma,t^{\tau\sigma\tau})=(s,t)^\sigma$
for any $\sigma\in G_K$.
\item The Selmer structure $\sel$ is
cartesian in the sense of Definition \ref{cartesian def}, and is self-dual
in the sense of \S \ref{variant}, relative to the pairing
above.
\item 
The set $\cl_{j}$ satisfies Hypothesis \ref{useful primes}.
\end{enumerate}
\end{Lem}

\begin{proof}
Since $\gl$ splits completely in $D_\infty$, there is an 
isomorphism of Galois modules
$T_\gp \iso T_p(E)\otimes \co_\gp$ with $G_{K_\gl}$
acting \emph{trivially} on $\co_\gp$.   
In particular, the residual representation of $T_\gp$ 
is absolutely irreducible since $E[p]$ is (by Hypothesis 
\ref{irreducible hyp}), and property (b) is immediate from the definition
of a $k$-admissible prime.
Define an $\co_\gp(1)$-valued pairing on $T_\gp$ by the rule
$$(x\otimes\alpha,y\otimes\beta)_\gp=\alpha\beta\cdot (x,y^\tau),$$
where $(\ ,\ )$ is the Weil pairing on $T_p(E)$.  The reduction
of this pairing modulo $p^j$ defines the pairing of (c).
The cartesian property of (d) is a consequence of the
fact that the Selmer structure $\sel_\gp$ on $T_\gp$ is obtained 
by propagation from $V_\gp$; see \cite[Lemma 3.7.1]{mazur-rubin}.
The self-duality follows from this and the self-duality of the local
conditions defining the canonical Selmer structure on $V_\gp$. 
Part (e) is \cite[Theorem 3.2]{BD03}.
\end{proof}

\begin{Lem}
The decomposition 
$\cn_j=\cn_j^\odd\sqcup\cn_j^\even$ (relative to the data 
$T$, $\sel$, $\cl_j$) 
of Definition \ref{stub} is given by 
\begin{equation}\label{parity decomp}
\cn_j^\odd=\cn_j^\indefinite\hspace{1cm}\cn_j^\even=\cn_j^\definite.
\end{equation}
Furthermore, the families (\ref{reduced euler system}) 
form an Euler system of odd type for $(T_j,\sel,\cl_{j})$.
\end{Lem}

\begin{proof}
First note that either (\ref{parity decomp}) holds or the opposite relation
$$
\cn_j^\even=\cn_j^\indefinite\hspace{1cm}\cn_j^\odd=\cn_j^\definite
$$
holds (simply because the even/odd decomposition of $\cn_j$
is determined by the function $\rho(\gn)$ of Corollary \ref{rho}, the 
definite/indefinite decomposition is determined by $\epsilon(\gn)$,
and both functions are multiplied by $-1$ when one replaces $\gn$
by $\gn\gl$.)  The reciprocity laws of \S \ref{es lambda section}
imply that the reduced families (\ref{reduced euler system})
satisfy the reciprocity laws of Definition \ref{es},
and so this family forms a nonzero Euler system which is of odd
type if (\ref{parity decomp}) holds, and is of even type otherwise.
By Proposition \ref{no even es}, the Euler system must by
of odd type, so (\ref{parity decomp}) holds.
\end{proof}

The Euler system of the lemma for $(T_k, \sel, \cl_k)$ may not be free,
but this can be remedied by shrinking  the set of indexing primes $\cl_k$
slightly.

\begin{Lem}\label{free es}
For any $j\ge 2k$ the families
$$
\{\overline{\esodd}_{\gn} \in 
\Sel_{\sel(\gn)}(K,T_k)\mid \gn\in \cn_j^\indefinite\}
\hspace{.7cm}
\{\overline{\eseven}_{\gn} \in R_k \mid \gn\in \cn_j^\definite\}
$$
form a free Euler system of odd type for $(T_k,\sel, \cl_j)$.
\end{Lem}

\begin{proof}
Fix $\gn\in\cn_j^\odd=\cn_j^\indefinite$ and $j\ge 2k$.  We must show that
there is a free rank one $R_k$-submodule of $\Sel_{\sel(\gn)}(K,T_k)$
containing $\overline{\esodd}_\gn$.  By Proposition \ref{structure}
we may decompose 
$$
\Sel_{\sel(\gn)}(K,T_j)\iso R_j\oplus N\oplus N
\hspace{1cm}
 \Sel_{\sel(\gn)}(K,T_k)\iso R_k\oplus M\oplus M.
$$
Let $\gm$ be the maximal ideal of $\co_\gp$ and fix a uniformizer $\pi$.
Let $e$ be the ramification degree of $\co_\gp$, so that $R_k$ has length
$ek$.  If $\gm^{ek-1}M\not=0$ then $\overline{\esodd}_\gn=0$
by Proposition \ref{annihilation}, and there is nothing to prove.
Assume therefore that $\gm^{ek-1}M=0$.
Lemma \ref{subquotients} gives a commutative diagram
$$
\xymatrix{
{\Sel_{\sel(\gn)}(K,T_j) \ar[d]\ar[rr]^{\pi^{e(j-k)}}} 
& & {\Sel_{\sel(\gn)}(K,T_j)[\gm^{ek}]} \\ 
{\Sel_{\sel(\gn)}(K,T_k). \ar[rru]^\iso} 
}
$$
The diagonal isomorphism implies that 
$\gm^{ek-1}\cdot \Sel_{\sel(\gn)}(K,T_j)[\gm^{ek}]$ 
is a cyclic module, and so
$\gm^{ek-1}N=0$.  But $j\ge 2k$ then implies that the image of $N$ under the
vertical arrow is zero, and hence the image of the vertical arrow
is free of rank one.  Since $\overline{\esodd}_\gn$ is contained in this
image, the claim is proved.
\end{proof}

Now fix $j\ge 2k$.
Since $\cn_j^\even=\cn_j^\definite$, the empty product lies in $\cn_j^\even$
if and only if $\epsilon(N^-)=-1$.  If this is the case, then
applying Theorem \ref{rigidity} with $\gn=1$ tells us that
$\Sel_{\sel}(K,T_k)\iso M\oplus M$
with 
$$
\len_{\co_\gp}(M)+\delta_\gp(k,j) = \ind(\overline{\eseven}_1, R_k)
=\ind(\eseven^\infty, \co_\gp/p^k\co_\gp).
$$
In particular, since the right hand side is $<k$,
 (\ref{important identification}) implies that
$M\oplus M\iso \Sel_{\sel_\gp}(K,W_\gp)$. We conclude
$$
\len_{\co_\gp}(\Sel_{\sel_\gp}(K,W_\gp))+2\cdot\delta_\gp(k,j)=
2\cdot \len_{\co_\gp}(\co_\gp/\co_\gp\eseven^\infty).
$$

Now consider the case $\epsilon(N^-)=1$.  First note that
Theorem \ref{rigidity} (again with $\gn=1$) tells us that
$\Sel_{\sel}(K,T_k)\iso R\oplus M\oplus M$
with 
$$
\len_{\co_\gp}(M) +\delta_\gp(k,j)= 
\ind\big(\overline{\esodd}_1, \Sel_\sel(K,T_k)\big).
$$
As above, this implies that $\len_{\co_\gp}(M)<k$.
Combining this with (\ref{important identification}) tells us that
$
\mathcal{S}_\gp\iso\mil_k \Sel_{\sel_\gp}(K,W_\gp)[p^k]
$
is a torsion-free rank-one $\co_\gp$-module.  
By  \cite[Lemma 3.7.1]{mazur-rubin} the reduction map
$$
\mathcal{S}_\gp/p^k\mathcal{S}_\gp\map{}\Sel_\sel(K,T_k)
$$  
is injective, and it follows  from Theorem \ref{rigidity} that 
\begin{eqnarray*}
\len_{\co_\gp}\big(\Sel_{\sel_\gp}(K,W_\gp)_{/\mathrm{div}}\big)
+2\delta_\gp(k,j)
&=& \len_{\co_\gp}(M\oplus M) +2\delta_\gp(k,j)\\
&=& 2\cdot \ind(\overline{\esodd}_1, \Sel_\sel(K,T_k)) \\
&=&2\cdot \len_{\co_\gp}(\mathcal{S}_\gp/\mathcal{S}_\gp\esodd^\infty\big).
\end{eqnarray*}
Now take $j\to\infty$. This completes the proof of Proposition \ref{dvr bound}.
\end{proof}


\subsection{Proof of Theorem \ref{abstract mc}}
\label{mc proof}


The theorem is reduced to Proposition \ref{dvr bound} exactly as in 
the proof of Theorem 5.3.10 of \cite{mazur-rubin}.

Assume that $\esodd^\infty$ or $\eseven^\infty$ is nonzero,
depending on whether we are in the case $\epsilon(N^-)=1$ or $-1$.
Since $\mathcal{S}$ is a finitely generated torsion-free $\Lambda$-module
(Lemma \ref{torsion-free}), if $\epsilon(N^-)=1$
it is easily seen that the image
of $\esodd^\infty$ in $\mathcal{S}/\gp\mathcal{S}$ is nonzero
for all but finitely many height-one primes $\gp$. Similar comments
hold for $\eseven^\infty$ when $\epsilon(N^-)=-1$.
Fix a finite set $\Sigma_\Lambda$ of height one primes of $\Lambda$
as in Proposition \ref{control} large enough that $\Sigma_\Lambda$
contains $p\Lambda$ and all prime divisors of the characteristic ideal 
of the torsion submodule of $X$, and
large enough that the special element (\ref{senator}) has nonzero image in 
$\mathcal{S}/\gp\mathcal{S}$ or $\Lambda/\gp\Lambda$ 
for all $\gp\not\in\Sigma_\Lambda$.

Fix any $\gp\not\in\Sigma_\Lambda$ and suppose $\epsilon(N^-)=1$.
By Proposition \ref{control}, $\kappa^\infty$ has nonzero image in
$\Sel_{\sel_\gp}(K,T_\gp)$. Proposition \ref{dvr bound} then implies that
$\Sel_{\sel_\gp}(K,T_\gp)$ and $\Sel_{\sel_\gp}(K,W_\gp)$ have rank and corank
one (respectively) as $\co_\gp$-modules.  It now follows from Proposition 
\ref{control} that 
$$
\mathrm{rank}_\Lambda\mathcal{S}=\mathrm{rank}_{\co_\gp}
(\mathcal{S}\otimes_\Lambda\co_\gp)=1
$$
and similarly for $X$.  The case $\epsilon(N^-)=-1$ is 
similar, and this completes the proof of (a).

Let $\gp$ be any height-one prime of $\Lambda$ different from $p\Lambda$,
and let $f\in\Lambda$ be a distinguished polynomial which generates $\gp$.
For each positive integer $m$ set $\gp_m=(f+p^m)\Lambda$.  For 
$m\gg 0$, $\gp_m$ is a prime ideal $\not\in\Sigma_\Lambda$ with
$\Lambda/\gp\iso\Lambda/\gp_m$ as rings (by Hensel's lemma).
Arguing as in the proof of Theorem 5.3.10 of \cite{mazur-rubin} and using
Proposition \ref{control}, we obtain
$$
\len_{\Z_p}\big( \Sel_{\sel_{\gp_m}}
(K, W_{\gp_m})_{/\mathrm{div}}\big)
= m\ \mathrm{rank}_{\Z_p}(\co_\gp) \cdot 
\ord_\gp\big(\mathrm{char}(X_{\Lambda-\mathrm{tors}})\big)
$$
up to $O(1)$ as $m$ varies.
Similarly, writing 
$\mathcal{S}_{\gp_m}=\Sel_{\sel_{\gp_m}}(K,T_{\gp_m})$,
\begin{eqnarray*}
\len_{\Z_p}(\mathcal{S}_{\gp_m}/\co_{\gp_m}\esodd^\infty)
&=&
m\ \mathrm{rank}_{\Z_p}(\co_\gp) \cdot 
\ord_\gp\big(\mathrm{char}(\mathcal{S}/\Lambda \esodd^\infty)\big)
\\
\len_{\Z_p}(\co_{\gp_m}/\co_{\gp_m}\eseven^\infty)
&=&
m\ \mathrm{rank}_{\Z_p}(\co_\gp) \cdot 
\ord_\gp(\eseven^\infty)
\end{eqnarray*}
when $\epsilon(N^-)=1$ or $-1$, respectively, up to $O(1)$ as
$m$ varies.
Proposition \ref{dvr bound} (with $k\gg 0$) gives the inequality
\begin{eqnarray*}
\len_{\Z_p}\big( \Sel_{\sel_{\gp_m}}(K, W_{\gp_m})_{/\mathrm{div}}\big)
+2e\delta_{\gp_m}(k)
&=& 2\cdot
\len_{\Z_p}(\mathcal{S}_{\gp_m}/\co_{\gp_m}\esodd^\infty)\\
\len_{\Z_p}\big( \Sel_{\sel_{\gp_m}}(K, W_{\gp_m})\big)
+2e\delta_{\gp_m}(k)
&=& 2\cdot
\len_{\Z_p}(\co_{\gp_m}/\co_{\gp_m}\eseven^\infty)
\end{eqnarray*}
(again, when $\epsilon(N^-)=1$ or $-1$, respectively)
where $e$ is the absolute ramification degree of $\co_{\gp_m}$,
which is independent of $m$.
As $\delta_{\gp_m}(k)\ge 0$, letting $m\to\infty$ proves the
inequality of (b) when $\gp\not=p\Lambda$.

We show that under the additional hypothesis of (c)
the value of $\delta_{\gp_m}(k)$ is bounded as $m$ and $k$ vary.
For every $j\ge k_0$ let $\gn(j)\in\cn_j^\definite$ be such that
$\lambda_{\gn(j)}$ has nonzero image in $\Lambda/(\gp,p^{k_0})$.
Then $\lambda_{\gn(j)}$ has nontrivial image in 
$\Lambda/(\gp_m,p^{k_0})$ for all $m\ge k_0$.  
Define $C_m$ to be the cokernel of $\Lambda/\gp_m\hookrightarrow\co_{\gp_m}$.
The groups  $C_m$ are finite, and up to isomorphism do not depend on $m$.  
If $k_1$ is large enough that $p^{k_1-k_0}$ kills $C_m$, then we
have the exact and commutative diagram
$$
\xymatrix{
{C_m[p^{k_1}]\ar[r]\ar[d]^0}  & {\Lambda/(\gp_m,p^{k_1})\ar[r]\ar[d]} &
{\co_{\gp_m}/p^{k_1}\co_{\gp_m}\ar[d]}  \\
{C_m[p^{k_0}]\ar[r]} & {\Lambda/(\gp_m,p^{k_0})\ar[r]}  &
{\co_{\gp_m}/p^{k_0}\co_{\gp_m}}.
}
$$
It follows that $\lambda_{\gn(j)}$ has nontrivial image in 
$\co_{\gp_m}/p^{k_1}\co_{\gp_m}$ for all $j\ge k_1$.
For $j\ge k\ge k_1$ we then have
$$
\delta_{\gp_m}(k,j)\le \ind(\lambda_{\gn(j)}, \co_{\gp_m}/p^{k}\co_{\gp_m})
< ek_1,
$$
hence $\delta_{\gp_m}(k)<ek_1$ for all $k\ge k_1$
and any $m\ge k_0$.

Finally, if $\gp=p\Lambda$, one instead takes 
$\gp_m=\big((\gamma-1)^m+p\big)\Lambda$ for some generator $\gamma\in\Gamma$
and a similar argument holds.  This completes the proof of 
Theorem \ref{abstract mc}.

\bibliographystyle{plain}

 
\end{document}